\documentclass[11pt,reqno]{amsart}

\usepackage[utf8]{inputenc}
\usepackage[T1]{fontenc}
\usepackage{lmodern}
\usepackage{amsmath,amssymb,amsthm,mathrsfs,mathtools}
\usepackage[margin=1in]{geometry}
\usepackage{xcolor}
\usepackage{cite}
\usepackage{hyperref}
\hypersetup{
  colorlinks=true,
  linkcolor={red!50!black},
  citecolor={blue!50!black},
  urlcolor={blue!80!black},
  pdftitle={On the Hardy--Littlewood--Chowla Conjecture},
  pdfauthor={Guangshi Lü, Zhiwei Wang, Jie Wu}
}

\allowdisplaybreaks[2]
\numberwithin{equation}{section}

\theoremstyle{plain}
\newtheorem*{Theorem A}{Theorem A}
\newtheorem*{Theorem B}{Theorem B}

\newtheorem{lemma}{Lemma}[section]
\newtheorem{theorem}{Theorem}

\newtheorem{proposition}[lemma]{Proposition}

\theoremstyle{definition}

\newtheorem{remark}{Remark}
\newtheorem{conjecture}[lemma]{Conjecture}

\def\leq{\leqslant}
\def\geq{\geqslant}
\def\le{\leqslant}
\def\ge{\geqslant}

\begin{document}
\title[On the Hardy--Littlewood--Chowla conjecture]
{On the Hardy--Littlewood--Chowla Conjecture}

\author[G. L\"u]{Guangshi L\"u}
\address{(Guangshi L\"u) School of Mathematics, Shandong University,
Jinan 250100, Shandong, China}
\email{gslv@sdu.edu.cn}

\author[Z. Wang]{Zhiwei Wang}
\address{(Zhiwei Wang) School of Mathematics, Shandong University,
Jinan 250100, Shandong, China
}
\email{zhiwei.wang@sdu.edu.cn}

\author[J. Wu]{Jie Wu}
\address{(Jie Wu) Mathematical Research Center, Shandong University, Jinan 250100, Shandong, China}
 \email{jie.wu@sdu.edu.cn}

\date{\today}
\keywords{Hardy--Littlewood--Chowla conjecture, M\"obius function, shifted primes, multiplicative correlations, Dirichlet polynomials}

\begin{abstract}
An old conjecture dating back at least to Hildebrand's work in 1989, asserts that the M\"obius function exhibits cancellation on shifted primes; that is, 
$$\sum_{p\le x}\mu(p+2)=o(\pi(x)),\quad \text{or\ equivalently}\quad \sum_{n\le x}\Lambda(n)\mu(n+2)=o(x).$$
%or equivalently $\sum_{p\le x}\mu(p+2)=o(\pi(x)).$
This is also a well-known model case for the parity problem in sieve theory.
In this paper, we prove this conjecture. As an application, we  show that each of the two sets $\{p:\, \mu(p+2)=1\}$ and $\{p:\, \mu(p+2)=-1\}$ has asymptotic density $1/2$ among the primes $p$ with $\mu^2(p+2)=1$:
\begin{equation*}
	\lim_{x\to\infty}
	\frac{\#\{p\le x:\ \mu(p+2)=\kappa\}}
	{\#\{p\le x:\ \mu^2(p+2)=1\}}
	=\frac12,
	\quad \kappa\in\{-1,\,1\}.
\end{equation*}
\end{abstract}

\maketitle

\tableofcontents

\section{\large Introduction}

For any positive integer $n$, let $\Omega(n)$ denote the total number of prime factors of $n$, counted with multiplicity.
Define the M\"obius function $\mu(n)$ and the Liouville function $\lambda(n)$ respectively by
\begin{equation*}
\mu(n) := \begin{cases}
	(-1)^{\Omega(n)} & \text{if $n$ is square free,}
	\\
	0 & \text{otherwise,}
\end{cases}
\qquad\text{and}\qquad
\lambda(n) 
:= (-1)^{\Omega(n)}.
\end{equation*}
We also define the von Mangoldt function $\Lambda(n)$  by
\begin{equation*}
\Lambda(n) := \begin{cases}
	\log p & \text{if $n=p^{\nu}$},\, \nu\geq 1,
	\\
	0 & \text{otherwise}.
\end{cases}
\end{equation*}
In analytic number theory, the M\"obius function and the Liouville function are closely related to the distribution of prime numbers. 
It is well known that for $x\to\infty$, one has
\begin{equation*}
\sum_{n\le x} \mu(n) = o(x) \;\Longleftrightarrow\;
\sum_{n\le x} \lambda(n) = o(x) \;\Longleftrightarrow\;
\sum_{n\le x} \Lambda(n) = \big(1+o(1)\big)x.
\end{equation*}
Note that the last asymptotic formula above is equivalent to the prime number theorem $\pi(x)\sim x/\log x$,
where $\pi(x)$ denotes the number of primes not exceeding $x$.
In general,  the discussion concerning the M\"obius function $\mu(n)$ has a counterpart for the Liouville function $\lambda(n)$. Hence, for simplicity, here we restrict attention to the M\"obius function.

For understanding the independence of additive and multiplicative structure, 
one often studies hybrid addition-multiplication problems.
A typical example is the twin prime conjecture.
More generally, Hardy and Littlewood proposed the following conjecture:
\textit{let $\ell\ge 0$ be an integer and let $a_1, \dots, a_{\ell}$ be distinct fixed natural numbers.
	Then for $x\to\infty$, one has
	\begin{equation*}
		\sum_{n\le x} \Lambda(n+a_1) \cdots \Lambda(n+a_{\ell}) 
		= \big(\mathfrak{S}_{\ell} + o(1)\big) x.
	\end{equation*}
	where}
\begin{equation}\label{eq:HL-singular-series}
	\mathfrak{S}_{\ell} := \prod_p \bigg(1-\frac{1}{p}\bigg)^{-\ell}
	\bigg(1-\frac{\big|\{a_1,\, \dots,\, a_{\ell}\}\,({\rm mod}\,p)\big|}{p}\bigg).
\end{equation}
Here we adopt the convention that the empty product is equal to 1.
As a M\"obius-variant of the Hardy--Littlewood conjecture,
Chowla \cite{Chowla} proposed the following well-known conjecture: 
\textit{let $k\ge 1$ be a positive integer and let $h_1, \dots, h_k$ be distinct fixed natural numbers.
	Then we have for $x\to\infty$,}
\begin{equation*}
	\sum_{n\le x} \mu(n+h_1) \cdots \mu(n+h_k) = o(x).
\end{equation*}
The following hybrid conjecture, which is called the Hardy--Littlewood--Chowla conjecture, has been studied 
by many mathematicians.
\begin{conjecture}[Hardy--Littlewood--Chowla]\label{conj-HLC}
	For any integer $k,\, \ell \geq 0$, let $h_1, \ldots, h_k, a_1, \ldots, a_{\ell}$ be distinct integers. Then we have 
	\begin{equation}\label{HLC-conj}
		\sum_{n \leq x} \mu(n+h_1) \cdots \mu(n+h_k) 
		\Lambda(n+a_1) \cdots \Lambda(n+a_{\ell})
		=\big(\mathfrak{S}_{k,\, \ell}+o(1)\big) x.
	\end{equation}
where $\mathfrak{S}_{k, \ell} = \mathfrak{S}_{\ell}$ as in \eqref{eq:HL-singular-series} if $k=0$ and $\mathfrak{S}_{k, \ell} =0$ if $k>0$. In particular, when $k=\ell=1$ and $a_1=0$, then \eqref{HLC-conj} reduces to the M\"obius function on shifted primes:
	$\sum_{n \leq x} \mu\bigl(n+h_1\bigr)  \Lambda(n) =o(x),$
or equivalently
\begin{align}\label{HLC-1-0}
	\sum_{p \leq x} \mu\bigl(p+h_1\bigr) =o\big(\pi(x)\big).
\end{align}
\end{conjecture}
Clearly, the ``pure'' cases $k=0$ and $\ell=0$ specialise to the original conjectures of Hardy--Littlewood and of Chowla, respectively. Little is known for Conjecture \ref{conj-HLC}. In 2022, Tao and Ter\"{a}v\"{a}inen \cite{TaoTeravainen2021} established the asymptotic \eqref{HLC-conj} for any $k\geq 0$ and $0\leq \ell\leq 2$, assuming that Siegel zeros exist.
Subsequently, Lichtman and Ter\"{a}v\"{a}inen \cite{LichtmanTeravainen2022} 
considered the averaged form of \eqref{HLC-conj}, and they showed that for any $k, \ell \geqslant 1$, 
\begin{align}\label{LT-average}
	\sum_{h_1 \leqslant H}\Big|\sum_{n \leqslant x} \mu\bigl(n+h_1\bigr) \cdots \mu\bigl(n+h_k\bigr) \Lambda\bigl(n+a_1\bigr) \cdots \Lambda\bigl(n+a_{\ell}\bigr)\Big|
	\ll H x \frac{\log \log H}{\log H}
\end{align}
holds for $(\log x)^{\ell+\varepsilon} \leqslant H \leqslant \exp \bigl((\log x)^a\bigr)$ with $a$ small enough.

In this paper, we are concerned with the case \eqref{HLC-1-0} of Hardy--Littlewood--Chowla conjecture.
In fact, \eqref{HLC-1-0} is a folklore conjecture and a well-known model case for the parity problem in sieve theory.
It has appeared in print at least since Hildebrand \cite{Hil} in 1989 (see also Pintz \cite{Pin} and Murty–Vatwani \cite{MurVat}). 
The following nontrivial upper bound for Liouville function, which can be compared to \eqref{HLC-1-0} 
$$
\sum_{p \leq x} \lambda\left(p+h_1\right) <(1-\varepsilon)\pi(x)
$$
is a problem of Sarnak \cite[Problem 5.2]{Sar}.

In 2022, Lichtman \cite{Lichtman} studied the sum in \eqref{HLC-1-0} on average, showing that for $H<x$ and $\log H / \log\log x \rightarrow \infty$ as $x \rightarrow \infty$, one has
\begin{equation*}
	\begin{aligned}
		\sum_{h \leqslant H}\Big|\sum_{p \leqslant x} \mu(p+h)\Big|=o\big(H \pi(x)\big) .
	\end{aligned}
\end{equation*}
As a corollary of \eqref{LT-average}, Lichtman and Ter\"{a}v\"{a}inen \cite[Corollary 1.4]{LichtmanTeravainen2022} also showed that for $(\log x)^{1+\varepsilon} \leqslant H \leqslant x$, \eqref{HLC-1-0}
holds for all but $o(H)$ values of $h \leqslant H$.

Our result is as follows.
\begin{theorem}\label{thm:main}
	As $x\to\infty$, we have
\begin{equation*}
	\sum_{p\le x}\mu(p+2)=o(\pi(x)).
\end{equation*}
\end{theorem}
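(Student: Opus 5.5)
The plan is to work with $S(x):=\sum_{n\le x}\Lambda(n)\mu(n+2)$ and show $S(x)=o(x)$; partial summation then gives the statement. Since $\mu(n+2)$ vanishes unless $n+2$ is squarefree, the first step is a reduction to the Liouville function. Write $\mu(m)=\sum_{d^2\mid m}\mu(d)\lambda(m/d^2)$, or more simply $\mu=\lambda\cdot\mu^2$. Truncating $d\le D$ with $D$ slowly growing costs only $O(x/D)$ by Brun--Titchmarsh applied to primes in the progressions $p\equiv -2 \pmod{d^2}$ with $d>D$. It then suffices to prove, uniformly for $d\le D$,
\begin{equation*}
\sum_{\substack{n\le x\\ n\equiv -2 \,({\rm mod}\, d^2)}}\Lambda(n)\lambda\Big(\frac{n+2}{d^2}\Big)=o\Big(\frac{x}{d^2}\Big).
\end{equation*}
This is a sum of a multiplicative function evaluated at a shifted prime, restricted to a fixed progression.

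The second step opens the prime weight by Vaughan's or Heath-Brown's identity. This writes $\Lambda$ as a combination of type I sums $\sum_{m\le M}a_m\sum_{k}\lambda((mk+2)/d^2)$ with $M\le x^{1/2-\delta}$, and type II sums $\sum_{m\sim M}\sum_{k\sim K}a_m b_k\lambda(mk+2)$ with $x^{\delta}\le M\le x^{1/2}$.

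For the type I sums, each inner sum is a sum of $\lambda$ over an arithmetic progression of modulus $m d^2$ and length $x/m$. I would handle them by the Matomäki--Radziwiłł method in progressions, or simply by the Siegel--Walfisz-type bound for $\lambda$ in progressions of modulus up to $x^{1/2-\delta}$ on average (Bombieri--Vinogradov for $\lambda$). Either route gives an $o$-saving.

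The real difficulty is the type II sums: bilinear correlations $\sum a_m b_k \lambda(mk+2)$. This is exactly where the parity barrier bites, since $\lambda(mk+2)$ carries no factorization information. My plan is to exploit the multiplicativity of $\lambda$ on the shifted side instead. Set $N=mk+2$ and use a Turán--Kubilius/Ramaré-type identity to extract a small prime $q\mid N$ from a range $[P,Q]$ with $P,Q$ small powers of $\log x$:
\begin{equation*}
\lambda(N)\approx -\frac{1}{\log\log Q-\log\log P}\sum_{P\le q\le Q,\ q\mid N}\lambda(N/q).
\end{equation*}
This reduces the problem to sums over $mk\equiv -2 \pmod q$ with $\lambda((mk+2)/q)$. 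Detecting the congruence by Dirichlet characters mod $q$ and applying Mellin inversion turns the sum into mean values of Dirichlet polynomials $\sum_{m\sim M} a_m\chi(m)m^{-it}$ and $\sum_{k\sim K}b_k\bar\chi(k)k^{-it}$, paired against the polynomial attached to $\lambda$. The large values of the $\lambda$-polynomial are controlled by Halász--Montgomery together with the Matomäki--Radziwiłł large-value estimates, exactly as in their short-interval theorem. The expected gain is $O\big(\log\log Q/\log Q\big)$ in the non-exceptional range.

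The main obstacle I foresee is this step. Passing from $\lambda(mk+2)$ back to Dirichlet polynomials in $m,k$ is not straightforward, because the additive shift destroys the multiplicative separation. I would first attempt to dualize in the variable $N=mk+2$ via the circle method, with major arcs handled by the above and minor arcs by the Matomäki--Radziwiłł--Tao bound on $\sup_\alpha|\sum\lambda(n)e(n\alpha)|$ in short intervals. I am genuinely unsure this closes.
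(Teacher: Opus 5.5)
There is a genuine gap, and it is where you suspect: nothing in the proposal bounds the Type II sums $\sum_{m\sim M}\sum_{k\sim K}a_mb_k\lambda(mk+2)$, and both mechanisms you suggest fail. Your reduction to $\lambda$ and your Type I treatment are sound. The paper does the analogous thing directly for $\mu$, via Proposition~\ref{prop:vau} with $U=V=x^{1/5}$ and a Bombieri--Vinogradov theorem for $\mu$ in Proposition~\ref{prop:typeI}.

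\emph{The Ramar\'e step is circular.} After extracting $q\mid mk+2$ you face $\sum_q\sum_{m,k}a_mb_k\mathbf 1_{q\mid mk+2}\lambda((mk+2)/q)$. Characters modulo $q$ separate only the congruence, not the factor $\lambda((mk+2)/q)$, whose argument is still tied additively to $m$ and $k$. No Mellin transform in $m,k$ acts on that factor, so no product of Dirichlet polynomials in $m$, $k$ and a $\lambda$-variable ever appears. In Matom\"aki--Radziwi\l\l\ the identity works because the summation variable $n=q\cdot(n/q)$ is itself the argument of $\lambda$, with no shift.

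\emph{The circle-method fallback fails for the usual binary reason.} Write the sum as $\int_0^1S_{ab}(\alpha)\overline{S_\lambda(\alpha)}e(2\alpha)\,d\alpha$ with $S_{ab}(\alpha)=\sum_{m,k}a_mb_ke(mk\alpha)$. The minor arcs can only be bounded by $\sup_{\mathfrak m}|S_\lambda|\int_0^1|S_{ab}|$ (or the reverse). For Möbius-type coefficients one expects $\int_0^1|S_{ab}|$ to be of order $x^{1/2}$, so this gives about $x^{3/2}L^{-A}$. Cauchy--Schwarz with Parseval gives only the trivial bound.

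\emph{The Fourier-uniformity input does not apply.} The Matom\"aki--Radziwi\l\l--Tao Fourier uniformity of $\lambda$ in short intervals holds only for almost all interval locations. It controls correlations averaged over shifts, not the single shift $2$.

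The missing idea, in the paper's route, is to give up on finding cancellation in the shifted sign at the Type II stage and move the burden onto the Vaughan coefficient.
\begin{itemize}
\item The paper inserts weights $\mathcal W(p+2)$ built from $K$ primes in sets $\mathcal P_i\subset(\exp(L^{a_i}),\exp(2L^{a_i})]$. Proposition~\ref{prop:rem} shows that an average of $b$ normalized weights is close to $1$ in mean square over shifted primes, with error $O(1/b)$. This plays the role of your Ramar\'e step.
\item It writes $mn+2=ah$ with $a=p_1\cdots p_K$ and applies Cauchy--Schwarz in the cofactor $h$, discarding $\mu(h)\rho^{\omega(h)}$ entirely.
\item What remains is the dispersion-type form $Q_Y$ on the determinant equation \eqref{eq:det2}, $bmn-ars=2(a-b)$. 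Its major-arc part $Q_Y^{\mathrm{maj}}$ is small because $\alpha_m=\mu(m)R_Z(m)$ cancels against $\chi(m)m^{it}$ (Propositions~\ref{prop:mel} and~\ref{prop:maj}).
\item The difference $Q_Y-Q_Y^{\mathrm{maj}}$ is bounded for general coefficients by the prime-list operator moment argument of Proposition~\ref{prop:D}. Essentially all of the work, and the whole parity difficulty, sits there.
\end{itemize}

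This route also needs auxiliary primes of size $\exp(L^{c})$ rather than fixed powers of $\log x$, since savings such as $Y^{-0.1}$ and $e^{-cL^{0.1}}$ must beat every power of $L$. With both endpoints fixed powers of $\log x$, your Tur\'an--Kubilius error would not even tend to $0$, because $\sum_{P\le q\le Q}1/q$ stays bounded. Without a Type II input of this kind, your outline does not prove the theorem.
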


\begin{remark}
	For simplicity,	we here restrict the shift $h_1$ in \eqref{HLC-1-0} to the case $h_1=2$, which %corresponds to 
	is closely related to the twin prime conjecture. 
\end{remark}

As an application of Theorem~\ref{thm:main}, we study the distribution of $\mu(p+2)$ taking values $1$ or $-1$. Indeed,  Sarnak's Problem 5.2 is also concerned with this question for Liouville function:

{\it{
		``It would even be interesting to show that both sets $\{p:\, \lambda(p-1)=1\}$ and $\{p:\, \lambda(p-1)=-1\}$ are infinite.''
}}

By establishing the asymptotic density formula $\sum_{n\le x}\Lambda(n)\mu^2(n+2)\sim A_2x$, together with Theorem~\ref{thm:main}, we could  show that each of the two patterns $\mu(p+2)=\pm1$ has ``density'' $1/2$ with respect to those primes $p,\, \mu(p+2)\neq 0$.

\begin{theorem}\label{thm2}
	For each $\kappa\in\{-1,1\}$, as $x\to\infty$, one has
	\begin{equation*}
		\#\Big\{p\le x:\ \mu(p+2)=\kappa\Big\}
		=\bigg(\frac{A_2}{2}+o(1)\bigg)\pi(x),
	\end{equation*}
	where $A_2$ is a positive constant defined by
	\begin{align*}
		A_2:=\prod_{p>2}\bigg(1-\frac{1}{p(p-1)}\bigg).
	\end{align*}
	In particular, among the primes $p$ for which $\mu^2(p+2)=1$, each of  the values $1$ and $-1$ occur with asymptotic density $1/2$:
	\begin{equation*}
		\lim_{x\to\infty}
		\frac{\big|\big\{p\le x:\ \mu(p+2)=\kappa\big\}\big|}
		{\big|\big\{p\le x:\ \mu^2(p+2)=1\big\}\big|}
		=\frac12,
		\quad \kappa\in\{-1,\,1\}.
	\end{equation*}
\end{theorem}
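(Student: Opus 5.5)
The plan is to deduce Theorem~\ref{thm2} from Theorem~\ref{thm:main} together with an asymptotic count of primes $p$ for which $p+2$ is squarefree. For $p\le x$ with $\mu^2(p+2)=1$ we have $\mu(p+2)\in\{\pm1\}$, so
\begin{equation*}
\#\{p\le x:\ \mu(p+2)=\kappa\}=\frac12\sum_{p\le x}\mu^2(p+2)+\frac{\kappa}{2}\sum_{p\le x}\mu(p+2).
\end{equation*}
By Theorem~\ref{thm:main} the second sum is $o(\pi(x))$. Hence everything reduces to proving
\begin{equation*}
\sum_{p\le x}\mu^2(p+2)=(A_2+o(1))\pi(x).
\end{equation*}
The limit of the ratio then follows at once, since the ratio equals $\frac{A_2/2+o(1)}{A_2+o(1)}\to\frac12$ and $A_2>0$. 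Note $A_2>0$ because $\sum_p 1/(p(p-1))<\infty$ and each factor with $p>2$ is positive.

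To count squarefree shifted primes, I would write $\mu^2(n)=\sum_{d^2\mid n}\mu(d)$ and fix a truncation parameter $y=y(x)\to\infty$ slowly, say $y=\log x$. This gives
\begin{equation*}
\sum_{p\le x}\mu^2(p+2)=\sum_{d\le y}\mu(d)\,\pi(x;d^2,-2)+O\Big(\sum_{d>y}\#\{p\le x:\ d^2\mid p+2\}\Big).
\end{equation*}

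\emph{Main terms.} For odd $d\le y$, the Siegel--Walfisz theorem gives $\pi(x;d^2,-2)=\pi(x)/\varphi(d^2)+O(x(\log x)^{-10})$, since $\gcd(-2,d^2)=1$. For even $d$, the condition $4\mid p+2$ forces $p=2$, so these terms contribute $O(1)$. Summing over $d\le y$ gives
\begin{equation*}
\pi(x)\sum_{d\le y,\ d\ \mathrm{odd}}\frac{\mu(d)}{\varphi(d^2)}+o(\pi(x)).
\end{equation*}
Since $\varphi(d^2)=d\varphi(d)$, the series converges absolutely. It therefore tends to the Euler product
\begin{equation*}
\prod_{p>2}\Big(1-\frac{1}{p(p-1)}\Big)=A_2.
\end{equation*}

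\emph{Tail.} This is the only delicate step, and it is handled by elementary bounds without any equidistribution input. I split the range of $d$ into three pieces:
\begin{itemize}
\item For $y<d\le \sqrt{x}/\log x$, the Brun--Titchmarsh inequality gives $\pi(x;d^2,-2)\ll x/(\varphi(d^2)\log(x/d^2))\ll \pi(x)/(d\varphi(d))$, whose sum over $d>y$ is $o(\pi(x))$.
\item For $\sqrt{x}/\log x<d\le \sqrt{x+2}$, I ignore primality and use the trivial bound $\#\{n\le x+2:\ d^2\mid n\}\le (x+2)/d^2+1$. Summing over this range gives $O(\log x+\sqrt{x})=o(\pi(x))$.
\item For $d>\sqrt{x+2}$ there are no terms, since then $d^2>p+2$.
\end{itemize}
Thus the tail is $o(\pi(x))$, which completes the asymptotic for $\sum_{p\le x}\mu^2(p+2)$ and hence the proof.

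I expect no serious obstacle beyond Theorem~\ref{thm:main} itself. The only care needed is in choosing the cutoffs so that the Brun--Titchmarsh range and the trivial range together cover all $d$.
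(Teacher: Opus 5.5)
Your proposal is correct and is essentially the paper's proof. In both, the identity $\mathbf{1}_{\{\mu(m)=\kappa\}}=\frac12(\mu^2(m)+\kappa\mu(m))$ together with Theorem~\ref{thm:main} reduces everything to $\sum_{p\le x}\mu^2(p+2)=(A_2+o(1))\pi(x)$. The paper proves that count in Proposition~\ref{prop:squarefree-density} exactly as you do, from $\mu^2(n)=\sum_{d^2\mid n}\mu(d)$ and Siegel--Walfisz for small $d$, though it uses $\log p$ weights followed by partial summation.

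The one real difference is the truncation point. The paper cuts at $d\le(\log x)^{A+2}$ rather than $d\le\log x$, so the trivial bound $\sum_{d>(\log x)^{A+2}}x/d^2$ covers the whole tail and Brun--Titchmarsh is never needed. Your version has two small slips, neither of which affects the conclusion.
\begin{itemize}
\item The bound $\pi(x;d^2,-2)\ll\pi(x)/(d\varphi(d))$ is not uniform up to $d=\sqrt{x}/\log x$, since there $\log(x/d^2)$ can be as small as $2\log\log x$.
\item Your last range contributes $O(\sqrt{x}\log x)$, not $O(\log x+\sqrt{x})$.
\end{itemize}
Both pieces are still $o(\pi(x))$, for example after splitting the Brun--Titchmarsh range at $d=x^{1/4}$, so your conclusion stands.
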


\vskip 5mm

\section{\large Notation and preliminaries}\label{sec:prelim}
\vskip 1.2mm

\subsection{Notation}

We use $\mathbf{1}_S$ to denote the indicator of a set $S$, that is,
$\mathbf{1}_S(n)=1$ when $n\in S$ and $\mathbf{1}_S(n)=0$ otherwise.
If $S$ is a statement, $\mathbf{1}_S$ equals $1$ when $S$ is true and $0$ otherwise. Moreover, if $S$ is a finite set, we use $|S|$ to denote its cardinality. 
For an arithmetic function $F$ on $(x,\,2x]$, define its infinity norm
$\|F\|_\infty=\max_{\substack{x<n\le2x}}|F(n)|$.
For each integer $n\geq 1$, let $P^+(n)$ denote the largest prime factor of $n$, with the convention $P^+(1)=1$; we also denote by $P^-(n)$ the smallest prime factor of $n$ with the convention $P^-(1)=\infty$.
We write $\tau_k(n)$ for the number of ordered factorizations
$n=n_1\cdots n_k$ into positive integers, and put $\tau=\tau_2$.
By convention, we write $\mathbb T=\mathbb R/\mathbb Z$, $e(t)=\exp(2\pi i t)$,
and denote by $\|t\|$ the distance from $t\in\mathbb R$ to the nearest integer. For $t\in\mathbb R$, let $\lfloor t\rfloor$ denote the greatest integer not exceeding $t$, and let $\lceil t\rceil$ denote the least integer not less than $t$.
%The notation $A\ll_{\alpha,\,\beta}B$ means that the implied constant may depend on $\alpha,\,\beta$, but not on the variables of summation.
We use $C_{K,\,\rho}$ and $c_{K,\,\rho}$ for generic positive
constants depending only on $K$ and $\rho$, whose values may change
between estimates. Throughout, $p$, with or without a subscript, denotes a prime number.

\vskip 1.2mm

\subsection{Sifted numbers and character--Mellin estimates}
\label{sec:sifted}\label{sec:mellin}
Note that by dyadic decomposition, and it suffices to consider the sums appearing in Theorems \ref{thm:main} and \ref{thm2} over the interval $(x,\, 2x].$

We first give a Vaughan decomposition for the von Mangoldt function in Proposition~\ref{prop:vau}
and a character--Mellin estimate in Proposition~\ref{prop:mel}.
The latter is used in the major-arc argument of Section~\ref{sec:major}.

Put
\begin{equation}\label{eq:001}
\begin{aligned}
L=\log x,\,\quad Z=\exp(L^{0.24}),\,\quad U=V=x^{1/5},\,\qquad
R_Z(n)=\mathbf{1}_{P^-(n)>Z}.
\end{aligned}
\end{equation}
We say an integer $n$ is sifted, or $Z$-sifted, when $R_Z(n)=1$; thus $1$ is also sifted. Define
\begin{equation*}
\begin{aligned}
c_V(r):=\sum_{\substack{\ell\mid r,\,\ell>V}}\Lambda(\ell).
\end{aligned}
\end{equation*}
The identity $\sum_{\ell\mid r}\Lambda(\ell)=\log r$ immediately gives
\begin{equation*}
\begin{aligned}
0\le c_V(r)\le\log r,\,\qquad c_V(r)=0\quad(r\le V).
\end{aligned}
\end{equation*}

\begin{proposition}\label{prop:vau}
If $F$ is bounded on $(x,2x]$, then
\begin{equation}\label{eq:vaughan}
\begin{aligned}
\sum_{x<n\le2x}\Lambda(n)F(n)
=T_1(F)-T_2(F)+T_3(F)+O\big(\|F\|_\infty Z\big),
\end{aligned}
\end{equation}
where
\begin{equation*}%
\begin{aligned}
T_1(F)&=\sum_{d\le U}\mu(d)R_Z(d)
 \sum_{x<dk\le2x}R_Z(k)(\log k)F(dk),\\
T_2(F)&=\sum_{\substack{d\le U\\\ell\le V}}
 \mu(d)\Lambda(\ell)R_Z(d\ell)
 \sum_{x<d\ell k\le2x}R_Z(k)F(d\ell k),\\
T_3(F)&=\sum_{\substack{d>U,\, r>V\\x<dr\le2x}}
 \mu(d)R_Z(d)c_V(r)R_Z(r)F(dr).
\end{aligned}
\end{equation*}
We refer to $T_1(F)$ and $T_2(F)$ as the Type I sums, and to $T_3(F)$ as the Type II sum.
\end{proposition}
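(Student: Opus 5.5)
We prove Proposition~\ref{prop:vau} by applying Vaughan's identity to the $Z$-sifted part of the von Mangoldt function and absorbing the unsifted part into the error term. Split $\sum_{x<n\le 2x}\Lambda(n)F(n)$ according to whether $R_Z(n)=1$. If $R_Z(n)=0$ and $\Lambda(n)\neq0$, then $n=p^\nu$ with $p\le Z$. Since $n>x>Z$ for large $x$, there are at most $O(\log x/\log 2)$ such $n$ for each $p\le Z$. Hence this part contributes
\[
\ll \|F\|_\infty\sum_{p\le Z}\log p\cdot\frac{\log x}{\log p}\ll \|F\|_\infty\,\pi(Z)L .
\]
Since $Z=\exp(L^{0.24})$, we have $L\le Z^{o(1)}$, so this is $O(\|F\|_\infty Z)$; if needed we can replace $Z$ by $Z/\log Z$ in the count to get exactly this bound. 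It therefore suffices to decompose $\Lambda(n)R_Z(n)$ for $x<n\le 2x$.

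Next, we carry out the algebraic decomposition. The key observation is that sifting is multiplicative: $R_Z(ab)=R_Z(a)R_Z(b)$, because $P^-(ab)=\min(P^-(a),P^-(b))$. Hence, for $n=mk$, the condition $R_Z(n)=1$ forces every divisor of $n$ to be sifted, and we can insert $R_Z$ on each factor freely. Start from the standard Vaughan identity: for $n>V$,
\[
\Lambda(n)=\sum_{\substack{dk=n\\ d\le U}}\mu(d)\log k-\sum_{\substack{d\ell k=n\\ d\le U,\ \ell\le V}}\mu(d)\Lambda(\ell)+\sum_{\substack{dr=n\\ d>U}}\mu(d)\sum_{\substack{\ell\mid r\\ \ell>V}}\Lambda(\ell).
\]
To derive it, write $\Lambda=\mu*\log$, split $\mu=\mu\mathbf 1_{\le U}+\mu\mathbf 1_{>U}$, and write $\log=\Lambda*1$ in the $d>U$ piece with $\Lambda=\Lambda\mathbf 1_{\le V}+\Lambda\mathbf 1_{>V}$. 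The $\mu\mathbf 1_{>U}*\Lambda\mathbf 1_{\le V}*1$ term equals $\Lambda\mathbf 1_{\le V}*(\mu\mathbf 1_{>U}*1)$. Using $\sum_{d\mid m}\mu(d)=[m=1]$, we have $\mu\mathbf 1_{>U}*1=[m=1]-\mu\mathbf 1_{\le U}*1$, and the $[m=1]$ part gives $\Lambda(n)\mathbf 1_{n\le V}=0$ for $n>x>V$. This produces the middle term. In the third term, the inner sum is exactly $c_V(r)$.

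Finally, we multiply by $R_Z(n)F(n)$ and sum over $x<n\le 2x$. Since $R_Z(n)$ equals $R_Z(d)R_Z(k)$, respectively $R_Z(d\ell)R_Z(k)$ or $R_Z(d)R_Z(r)$, the three terms become $T_1(F)$, $T_2(F)$, $T_3(F)$. In $T_3$ the constraint $r>V$ can be added because $c_V(r)=0$ for $r\le V$. The only delicate point is the bookkeeping of the sifting condition across the convolution identity. The identity is applied pointwise in $n$ and then multiplied by $R_Z(n)$, so no cancellation is disturbed and no real obstacle arises; the error term comes solely from the unsifted prime powers.
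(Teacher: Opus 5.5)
Your algebraic part is correct and is the same as the paper's argument. You apply Vaughan's identity pointwise for $n>V$; your derivation via $\Lambda=\mu*\log$, $\log=\Lambda*1$ and $\mu\mathbf 1_{>U}*1=\delta-\mu\mathbf 1_{\le U}*1$ is right. You then multiply by $R_Z(n)F(n)$ and use the complete multiplicativity of $R_Z$, together with $c_V(r)=0$ for $r\le V$, to identify $T_1(F)$, $T_2(F)$ and $T_3(F)$.

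The error term, however, is not established as written. Your count gives $\|F\|_\infty\,\pi(Z)L$, and $\pi(Z)L\asymp ZL/\log Z=ZL^{0.76}$, which is not $O(Z)$. The observation $L=Z^{o(1)}$ only yields a bound $Z^{1+o(1)}$. Replacing $Z$ by $Z/\log Z$ is exactly the step that produces $ZL^{0.76}$, so it does not help.

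The missing point is that your count of prime powers is too generous. For each prime $p$, the interval $(x,2x]$ contains at most one power of $p$: if $x<p^\nu$ and $p^{\nu+1}\le 2x$, then $p=p^{\nu+1}/p^\nu<2$, which is impossible. Hence the unsifted contribution is at most $\|F\|_\infty\sum_{p\le Z}\log p=\|F\|_\infty\,\theta(Z)\ll\|F\|_\infty Z$ by Chebyshev's bound. This is exactly the paper's estimate.

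Your weaker bound $O(\|F\|_\infty ZL^{0.76})$ would still be harmless in the later application, since $Z=x^{o(1)}$. But it does not prove the proposition with the stated error term $O(\|F\|_\infty Z)$.
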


\begin{proof}
	For every positive integer $n$, Vaughan's identity gives
	\begin{equation}\label{eq:vauid}
		\begin{aligned}
			\Lambda(n)
			=\Lambda(n)\mathbf{1}_{n\le V}
			+\sum_{\substack{dk=n\\d\le U}}\mu(d)\log k
			-\sum_{\substack{d\ell k=n\\d\le U,\,\ \ell\le V}}
			\mu(d)\Lambda(\ell)
			+\sum_{\substack{d\ell k=n\\d>U,\,\ \ell>V}}
			\mu(d)\Lambda(\ell).
		\end{aligned}
	\end{equation}
For $x<n\le2x$, the first term of \eqref{eq:vauid}
is zero, since $V<x$. By the definition of $c_V(r)$ above, we multiply \eqref{eq:vauid} by $R_Z(n)F(n)$ and sum over $x<n\le2x$, getting
\begin{equation}\label{eq:vaughan-sifted-sum}
	\sum_{x<n\le2x}\Lambda(n)R_Z(n)F(n)
	=T_1(F)-T_2(F)+T_3(F).
\end{equation}
For a prime power $p^j$, we have $R_Z(p^j)=0$
if and only if $p\le Z$. Hence
\begin{align*}
	\bigg|\sum_{x<n\le2x}\Lambda(n)F(n)
	-\sum_{x<n\le2x}\Lambda(n)R_Z(n)F(n)\bigg|
	=\bigg|\sum_{p\le Z}
	\sum_{\substack{j\ge1\\x<p^j\le2x}}
	(\log p)F(p^j)\bigg|\ll\|F\|_\infty Z.
\end{align*}
Combining this estimate with \eqref{eq:vaughan-sifted-sum}, we conclude \eqref{eq:vaughan}.
\end{proof}

We next establish the character--Mellin estimate for the sifted M\"obius coefficient in Proposition~\ref{prop:vau}.

\begin{proposition}\label{prop:mel}
Let $\delta,\,B,\,C>0$ be fixed. Suppose that
$x^\delta\le D\le2x$ and $I\subseteq[D,2D]$ is an interval.
Let $\chi$ be a Dirichlet character modulo $q$ with $1\le q\le (\log x)^B$. Then for any $A>0$
\begin{equation}\label{eq:mellin}
	\sup_{|t|\le x^C}
	\bigg|\sum_{d\in I}\mu(d)R_Z(d)\chi(d)d^{it}\bigg|
	\ll_{A,\,B,\,C,\,\delta}D L^{-A}.
\end{equation}
\end{proposition}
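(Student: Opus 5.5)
We prove Proposition~\ref{prop:mel} through the Dirichlet series of $\mu R_Z\chi$ and a contour shift, using the Vinogradov--Korobov zero-free region together with Siegel's theorem. The reason this works is that $Z=\exp(L^{0.24})$ is small, so sifting by $R_Z$ changes the generating function only by a harmless Euler factor over primes $p\le Z$. Concretely,
\begin{equation*}
F(s):=\sum_{d\ge1}\frac{\mu(d)R_Z(d)\chi(d)}{d^{s}}=\frac{1}{L(s,\chi)}\prod_{p\le Z}\Big(1-\frac{\chi(p)}{p^{s}}\Big)^{-1},\qquad \Re s>1 .
\end{equation*}

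\textbf{Step 1: bound the Euler factor.} For $\Re s=\sigma\ge 1-\eta$ with $\eta$ small, we have
\begin{equation*}
\log\prod_{p\le Z}\big|1-\chi(p)p^{-s}\big|^{-1}\ll\sum_{p\le Z}p^{-\sigma}\ll Z^{\eta}\log\log Z .
\end{equation*}
So if $\eta\le c/L^{0.24}$, this factor is $\ll(\log L)^{O(1)}$. A little more room is available: taking $\eta=L^{-0.24}\cdot(\text{small})\cdot\log\log L$ gives a factor of size $\exp(O(\log L))=L^{O(1)}$, which is still acceptable.

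\textbf{Step 2: perturbed Perron formula.} Set $I=(a,b]$ with $a,b\asymp D$ and write
\begin{equation*}
\sum_{d\in I}\mu(d)R_Z(d)\chi(d)d^{it}=\frac{1}{2\pi i}\int_{1+1/L-iT}^{1+1/L+iT}F(s-it)\,\frac{b^{s}-a^{s}}{s}\,ds+O\Big(\frac{D L^{2}}{T}\Big),
\end{equation*}
with $T=x^{C}\cdot L^{A+2}$ or similar, so that the shifted range $|\Im s|\le T+x^{C}$ stays polynomial in $x$. Now move the contour to $\Re s=1-\eta$. By Vinogradov--Korobov (uniformly in $q\le L^B$), for $|\tau|\le x^{2C}$ the function $1/L(\sigma+i\tau,\chi)$ has no poles and satisfies $1/L\ll(\log x)^{O(1)}$ in the region
\begin{equation*}
\sigma\ge 1-\frac{c}{(\log x)^{2/3}(\log\log x)^{1/3}},
\end{equation*}
with one possible exception: a Siegel zero $\beta$ of a real character. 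Siegel's theorem gives $1-\beta\gg_\varepsilon q^{-\varepsilon}\ge L^{-B\varepsilon}$, which is ineffective, and so the implied constants are ineffective. The key compatibility check is that $\eta\asymp L^{-0.24}$ is much smaller than $L^{-2/3}$ would force. In fact we need $\eta\le$ (width of the zero-free region). The width is $\asymp L^{-2/3-o(1)}$, which is \emph{smaller} than $L^{-0.24}$, so we take $\eta=c L^{-2/3}(\log L)^{-1/3}$. Then Step 1 gives $Z^{\eta}=\exp(cL^{0.24-2/3}\cdots)=O(1)$, so the Euler factor is $\ll\log L$. The choice $0.24<2/3$ is exactly what makes this fit.

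\textbf{Step 3: the new line.} On $\Re s=1-\eta$ the integrand is
\begin{equation*}
\ll D^{1-\eta}\,L^{O(1)}\log T\ll D\exp\!\big(-c\,\delta\,L^{1/3}(\log L)^{-1/3}\big)L^{O(1)},
\end{equation*}
using $D\ge x^{\delta}$. The horizontal segments are bounded in the same way. The pole at $s=0$ lies outside the contour and does not arise. Altogether we get $\ll DL^{-A}$, uniformly in $|t|\le x^{C}$.

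\textbf{Main obstacle.} The delicate point is the uniformity of the zero-free region and of the bound for $1/L$ over all $q\le L^{B}$, combined with the possible exceptional zero. We handle this with Siegel's theorem: it places $\beta$ at distance $\gg_{B,A} L^{-1/2}$ from $1$, well outside our narrow strip, so the contour never crosses it. A second point to check is that the bound $1/L(s,\chi)\ll\log(q(|\tau|+2))^{O(1)}$ holds up to height $x^{2C}$, where $\log$ of the height is $\asymp L$; this is standard inside the Korobov--Vinogradov region.
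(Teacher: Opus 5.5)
Your argument is correct, but it takes a different route from the paper. The paper never handles $L$-functions directly. It quotes the unsifted bound of Matom\"aki--Shao--Tao--Ter\"av\"ainen (their Lemma 3.9(ii)) for $\sum_{n\in I}\mu(rn)\chi(n)n^{-1/2-it}$, and converts it by partial summation into $\sum_{n\in I}\mu(n)\chi(n)n^{it}\ll HL^{-A}$ for $I\subset[H,2H]$, $H\ge x^{\delta/2}$, $|t|\le x^C$, taking $\mathcal X=H^R$ to cover the $t$-range. It then removes the sieve weight combinatorially through $\mu R_Z=\mu*\mathbf{1}_{P^+(\cdot)\le Z}$. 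Smooth $a\le D^{1/2}$ are handled by the unsifted bound on $I/a$ together with $\sum_{P^+(a)\le Z}1/a\ll\log Z$. Smooth $a>D^{1/2}$ are handled by Rankin's trick, which saves $\exp(-c_\delta L^{0.76})$.

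You instead put the sieve weight into the generating function. The finite Euler product over $p\le Z$ then costs only $O(\log Z)$ on any line $\Re s\ge 1-\eta$ with $\eta\log Z\ll 1$. Since the Vinogradov--Korobov width at height $x^{O(1)}$ is $L^{-2/3+o(1)}$, one contour shift gives the explicit saving $\exp(-c\delta L^{1/3}(\log L)^{-1/3})$. What your version buys:
\begin{itemize}
\item It is more self-contained.
\item It makes clear why $Z=\exp(L^{0.24})$ is admissible. Taking $\eta\le\min\{L^{-2/3-o(1)},1/\log Z\}$ would allow $\log Z$ up to $L^{1-\epsilon}$, so $0.24<2/3$ is convenient rather than essential.
\end{itemize}
The cost is that you must supply yourself the zero-free region and the bounds for $1/L(s,\chi)$, uniformly for all $q\le L^B$. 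The paper's version is modular: it works with any unsifted input giving log-power savings on intervals of length $x^{\delta/2}$ with $|t|\le x^C$. Both are ineffective through Siegel's theorem.

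Two small corrections.
\begin{itemize}
\item On $\Re s=1-\eta$ the Euler factor is bounded by $\prod_{p\le Z}(1-p^{-1+\eta})^{-1}\asymp\log Z=L^{0.24}$, not by $(\log L)^{O(1)}$ or $\log L$ as you state. This is still $L^{O(1)}$ and is absorbed.
\item For the exceptional character it is not enough that the contour avoids $\beta_1$. You also need an upper bound for $1/|L(s,\chi_1)|$ on the part of the new line near $s=1$. The usual local analysis gives this: there $L'/L(s,\chi_1)$ equals $(s-\beta_1)^{-1}$ plus a term of size $(\log q)^{O(1)}$, and you integrate over a path of length $O(1/\log q)$. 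The result is $1/|L(s,\chi_1)|\ll L^{O(1)}$, because Siegel's theorem keeps $|s-\beta_1|\gg L^{-1/2}$. This is the same step as in the classical Siegel--Walfisz-type bound for $\sum\mu(n)\chi(n)$.
\end{itemize}
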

\begin{proof}
Let $\mathcal X\ge3$ and $1\le r\le\mathcal X$. Let $\alpha_0\in(0,1]$ be fixed.
Then for any character $\chi$ (mod $q$) with $q\leq (\log\mathcal X)^{A_0}$ and any $A_0>0$, one has (see \cite[Lemma 3.9(ii)]{MSTT})
\begin{equation}\label{eq:muinput}
\begin{aligned}
\sup_{|t|\le\mathcal X}
 \sup_{I\subset[\mathcal X^{\alpha_0},\,2\mathcal X^{\alpha_0}]}
 \biggl|\sum_{n\in I}\frac{\mu(rn)\chi(n)}{n^{1/2+it}}\biggr|
 \ll_{\alpha_0,\,A_0}
 \mathcal X^{\alpha_0/2}(\log\mathcal X)^{-A_0}.
\end{aligned}
\end{equation}

Let $H$ satisfy $x^{\delta/2}\le H\le2x$.
We first consider the sum over subintervals of $[H,\,2H]$. 
Choose $R>\max\{1,\,2(C+1)/\delta\}$ depending only on
$\delta$ and $C$, and put
\begin{equation*}
	\mathcal X=H^R,\,\qquad \alpha_0=1/R.
\end{equation*}
Then $H=\mathcal X^{\alpha_0}$ and
$\mathcal X\ge x^{R\delta/2}>x^{C+1}$, so the range
$|t|\le x^C$ is covered by that in \eqref{eq:muinput}.

For each fixed $A_0$, the implied constant in
\eqref{eq:muinput} is independent of $H$, because
$\alpha_0=1/R$ is fixed. Also,
$\log\mathcal X\asymp_{\delta,\,C}L$.

Increase $A_0$ to cover the modulus range and the required saving, replace $t$ by $-t$, and apply
partial summation only to the real weight $n^{1/2}$. This gives
\begin{equation}\label{eq:muplain}
\begin{aligned}
\sup_{|t|\le x^C}\sup_{I\subset[H,\,2H]}
 \bigl|\sum_{n\in I}\mu(n)\chi(n)n^{it}\bigr|
 \ll_{A,\,B,\,C,\,\delta}H L^{-A}.
\end{aligned}
\end{equation}
The passage from \eqref{eq:muinput} to \eqref{eq:muplain} does not differentiate $n^{it}$, so it
incurs no loss in $|t|$.

The following identity
\begin{equation*}
\begin{aligned}
\mu(n)R_Z(n)=\sum_{\substack{a\mid n\\P^+(a)\le Z}}\mu(n/a),
\end{aligned}
\end{equation*}
handles the indicator $R_Z(n)$. It can be checked prime by prime:
for $p\le Z$, the right-hand side at $p^j$ is $1-1=0$ for $j\ge1$, whereas for $p>Z$ it is
$\mu(p^j)$. The sum on the left of \eqref{eq:mellin} therefore equals
\begin{equation}\label{eq:musum}
\begin{aligned}
\sum_{\substack{a\le2D\\P^+(a)\le Z}}\chi(a)a^{it}
\sum_{k\in I/a}\mu(k)\chi(k)k^{it}.%:=\mathcal{S}_D,
\end{aligned}
\end{equation}
For $a\le D^{1/2}$, put $H=D/a$. Then
$I/a\subseteq[H,\,2H]$ and $x^{\delta/2}\le H\le2x$.
Applying \eqref{eq:muplain} with $A$ replaced by $A+1$,
we obtain
\begin{equation*}
	\sup_{|t|\le x^C}
	\bigg|\sum_{k\in I/a}\mu(k)\chi(k)k^{it}\bigg|
	\ll_{A,\,B,\,C,\,\delta}\frac{D}{a}L^{-A-1}.
\end{equation*}
Moreover, one has
\begin{equation*}
	\sum_{P^+(a)\le Z}\frac1a
	=\prod_{p\le Z}(1-p^{-1})^{-1}
	\ll\log Z,
\end{equation*}
whence, by $\log Z=L^{0.24}$
\begin{align*}
	\sup_{|t|\le x^C}
	\bigg|
	\sum_{\substack{a\le D^{1/2}\\P^+(a)\le Z}}
	\chi(a)a^{it}\sum_{k\in I/a}\mu(k)\chi(k)k^{it}
	\bigg|
	\ll	DL^{-A-1}
	\sum_{\substack{a\le D^{1/2}\\P^+(a)\le Z}}\frac1a
	\ll DL^{-A}.
\end{align*}

For $a>D^{1/2}$, take $\eta=1/\log Z$. Rankin's trick gives
\begin{equation}\label{eq:smtail}
\begin{aligned}
\sum_{\substack{a>D^{1/2}\\P^+(a)\le Z}}\frac1a
\le D^{-\frac{\eta}{2}}\prod_{p\le Z}(1-p^{-1+\eta})^{-1}\ll \exp\biggl(-\frac{\log D}{2\log Z}\biggr)\log Z
\ll L^{0.24}\exp(-c_\delta L^{0.76}).
\end{aligned}
\end{equation}
For the second inequality we used
\begin{equation*}
\begin{aligned}
\log\frac{\prod_{p\le Z}(1-p^{-1+\eta})^{-1}}
              {\prod_{p\le Z}(1-p^{-1})^{-1}}
=\sum_{p\le Z}\sum_{j\ge1}
       \frac{p^{-j(1-\eta)}-p^{-j}}j\ll\sum_{p\le Z}\frac{p^\eta-1}p
 \ll\eta\sum_{p\le Z}\frac{\log p}p\ll1,
\end{aligned}
\end{equation*}
since $\eta=1/\log Z$.
The inner sum
in \eqref{eq:musum} has at most $2D/a$ terms, so \eqref{eq:smtail} proves the required saving for
$a>D^{1/2}$.
\end{proof}

For a complex-valued function $w$ on an interval $I$, define the  variation by
\begin{equation}\label{eq:vardef}
\begin{aligned}
\operatorname{Var}_I(w):
 =\sup_{\substack{ t_i\in I\\ t_0<\cdots<t_j}}\,
       \sum_{i=0}^{j-1}\big|w(t_{i+1})-w(t_i)\big|.
\end{aligned}
\end{equation}
A function is of bounded variation when \eqref{eq:vardef} is finite. Partial summation and
Proposition~\ref{prop:mel} then give
\begin{equation}\label{eq:mubv}
\begin{aligned}
\sup_{|t|\le x^C}
\Bigl|\sum_{d\in I}\mu(d)R_Z(d)\chi(d)d^{it}w(d)\Bigr|
\ll_{A,\,B,\,C,\,\delta}DL^{-A}
 \Bigl(\|w\|_\infty+\operatorname{Var}_I(w)\Bigr).
\end{aligned}
\end{equation}
Thus smooth cutoffs with bounded supremum norm and total variation preserve \eqref{eq:mellin}. If
these norms are bounded by a fixed power of $L$, the same conclusion follows after increasing the
precision in Proposition~\ref{prop:mel} by that power.

\subsection{Auxiliary weights and reduction to weighted sums}
\label{sec:marks}
First, we introduce some auxiliary
weights depending on prime divisors in prescribed intervals.
Let $K\ge1$ be an integer, and let $1/10<a_1<\cdots<a_K<1/5$. 
%a number $0<\rho<1$, and exponents $0.1<a_1<\cdots<a_K<0.2$. 
For sufficiently large $x$, we define the pairwise disjoint sets of primes $\mathcal P_i$ by
\begin{equation}\label{eq:prime-sets}
	\begin{aligned}
		\mathcal P_i:=\Big\{p:\ \exp(L^{a_i})<p\le\exp(2L^{a_i})\Big\}.
	\end{aligned}
\end{equation}
Let $0<\rho<1$ be fixed. We also define the product 
\begin{equation}\label{eq:mark}
	\begin{aligned}
		\mathcal W(n)=\prod_{i=1}^K\frac{\rho^{\omega_i(n)-1}\omega_i(n)}{V_i},
	\end{aligned}
\end{equation}
where
\begin{align*}
	\omega_i(n)=\sum_{p\in\mathcal P_i}\mathbf{1}_{p\mid n}
	\quad \text{and}\quad
	V_i=\sum_{p\in\mathcal P_i}\frac1p=\log2+o(1).
\end{align*}
Note that $W(n)$ is bounded by
\begin{equation*}
	\begin{aligned}
		0\le\mathcal W(n)
		\le\biggl(\frac{2}{\log2}\sup_{j\ge1}j\rho^{j-1}\biggr)^K
		<\infty.
	\end{aligned}
\end{equation*}

Let $P=\prod_{i=1}^K\prod_{p\in\mathcal P_i}p$.
By the Chinese remainder theorem and \eqref{eq:prime-sets},
\begin{equation}\label{eq:markexp}
	\begin{aligned}
		m_{\mathcal W}
		:=\frac1{\varphi(P)}
		\sum_{\substack{a\,(\textrm{mod}\, P)\\(a,\,P)=1}}\mathcal W(a+2)=\prod_{i=1}^K\Biggl\{
		\frac1{V_i}\sum_{p\in\mathcal P_i}\frac1{p-1}
		\prod_{\substack{q\in\mathcal P_i\\q\ne p}}
		\biggl(1-\frac{1-\rho}{q-1}\biggr)\Biggr\}.
	\end{aligned}
\end{equation}
By calculation, one may find that $m_{\mathcal W}=2^{K(\rho-1)}
+o(1)$.
Since $0\le\mathcal W(n)\ll_{K,\rho}1$, we have
$\mathcal W(n)^2\ll_{K,\rho}\mathcal W(n)$. Hence there exists an constant $C_{K,\rho}$ such that
\begin{equation}\label{eq:003}
	\begin{aligned}
		0< m_{\mathcal W}\le C_{K,\rho},
		\qquad
		\frac1{\varphi(P)}
		\sum_{\substack{a\,(\textrm{mod}\, P)\\(a,\,P)=1}}
		\biggl(\frac{\mathcal W(a+2)}{m_{\mathcal W}}\biggr)^2
		\le C_{K,\rho}.
	\end{aligned}
\end{equation}

\begin{proposition}\label{prop:rem}
	For $1\le j\le b$, let $\mathcal W_j$ be a weight of the form
	\eqref{eq:mark}, constructed from $K$ sets of primes as in
	\eqref{eq:prime-sets}. Define $m_j$ by \eqref{eq:markexp}
	with $\mathcal W$ replaced by $\mathcal W_j$.
	Assume that the prime sets for different weights have no primes in common. Then
	\begin{equation}\label{eq:004}
		\begin{aligned}
			\frac1x\sum_{x<p\le2x}\log p
			\biggl(\frac1b\sum_{j=1}^b
			\frac{\mathcal W_j(p+2)}{m_j}-1\biggr)^2
			\le\frac{C_{K,\,\rho}}b+o(1).
		\end{aligned}
	\end{equation}
	Here $b,K$ and all exponents are fixed before $x\to\infty$.
\end{proposition}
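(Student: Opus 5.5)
Expanding the square, the left side of \eqref{eq:004} equals
\begin{equation*}
\frac1{b^2}\sum_{j,\,j'=1}^b \frac{S_{jj'}}{m_jm_{j'}}-\frac2b\sum_{j=1}^b\frac{S_j}{m_j}+\frac1x\sum_{x<p\le2x}\log p,
\end{equation*}
where $S_j=\frac1x\sum_{x<p\le 2x}(\log p)\mathcal W_j(p+2)$ and $S_{jj'}=\frac1x\sum_{x<p\le2x}(\log p)\mathcal W_j(p+2)\mathcal W_{j'}(p+2)$. The last term is $1+o(1)$ by the prime number theorem. The plan is to show that $S_j=m_j+o(1)$ and $S_{jj'}=m_jm_{j'}+o(1)$ for $j\ne j'$, while for $j=j'$ we have $S_{jj}\le C_{K,\rho}m_j^2+o(1)$. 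Substituting these, the main terms combine to $(1-\frac1b)-2+1+O(C_{K,\rho}/b)+o(1)$. The $-1/b$ is absorbed, which gives $C_{K,\rho}/b+o(1)$.

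Each weight (or product of two weights) is a function $G(n)$ of the residue of $n$ modulo $P_G$, where $P_G$ is the product of all primes in the relevant sets $\mathcal P_i$. The problem is that $P_G$ is enormous, of size about $\exp(\exp(2L^{1/5}))$, so equidistribution modulo $P_G$ cannot be applied directly. Instead I will use the multiplicative-type structure. The weight $\omega_i(n)\rho^{\omega_i(n)-1}$ equals $\frac{d}{dz}z^{\omega_i(n)}$ at $z=\rho$, that is, $\sum_{p\in\mathcal P_i}\mathbf 1_{p\mid n}\rho^{\omega_i(n)-1}$. Also $\rho^{\omega}=\sum_{d\mid n,\ d\mid P_i}\mu^2(d)(\rho-1)^{\omega(d)}$. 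So $\mathcal W$ expands as a sum over squarefree $d$ composed of primes from the sets $\mathcal P_i$, with coefficients $\lambda_d$ satisfying $|\lambda_d|\le \prod_{p\mid d}c_p$ and decaying like $(1-\rho)^{\omega(d)}$.

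I truncate this expansion to $\omega(d)\le M$ for each $i$, with $M$ a large fixed number or one growing slowly. Then $d\le\exp(2KML^{1/5})=x^{o(1)}$. The tail, where $\omega_i(n)>M$ for some $i$, is controlled by an upper-bound sieve: primes with $p+2$ having many prime factors in $\mathcal P_i$ contribute $\ll \sum_{m>M}V_i^m/m!$, which is small. After truncation, $S_j$ becomes $\sum_d\lambda_d\frac1x\sum_{x<p\le2x,\ d\mid p+2}\log p$.

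Bombieri--Vinogradov handles moduli $d\le x^{1/2-\varepsilon}$ on average. Here, however, the moduli lie far below $x^{o(1)}$ but far above $(\log x)^B$. I therefore need $\sum_{d}|\lambda_d|\,|E(x;d,-2)|$ to be $o(x)$. By Cauchy--Schwarz and Bombieri--Vinogradov with the trivial bound $E\ll x\tau(d)/\varphi(d)$, this reduces to bounding $\sum_d\lambda_d^2\tau(d)\varphi(d)^{-1}$ polylogarithmically. That holds because $d$ has at most $KM$ prime factors drawn from sets with $\sum 1/p=O(1)$.

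The main terms reassemble as $\sum_d\lambda_d/\varphi(d)$, which is exactly the Chinese remainder theorem average $m_j$ of \eqref{eq:markexp}, up to the truncation error. The same argument applies to $\mathcal W_j\mathcal W_{j'}$. Because the prime sets are disjoint, the local densities factor and give $m_jm_{j'}$. For $j=j'$, the main term is the second moment $\frac1{\varphi(P)}\sum\mathcal W(a+2)^2$, which by \eqref{eq:003} is at most $C_{K,\rho}m_j^2$.

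The main obstacle is the truncation and tail control. I must show that discarding integers with many prime factors in some $\mathcal P_i$ costs $o(1)$ uniformly over primes $p$, and not merely over integers. The first thing to try is the bound $\rho^{\omega-1}\omega\le 1$, together with the counting function $\#\{p\le x:\omega_i(p+2)\ge M\}$ estimated via $\sum_{d\mid P_i,\ \omega(d)=M}\#\{p:d\mid p+2\}$ and Brun--Titchmarsh. This gives $\ll x\,V_i^M/M!$.
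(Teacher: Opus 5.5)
\textbf{The architecture matches the paper, but the tail step fails as proposed.} Your plan is the paper's. You expand the square and write $u\rho^{u-1}=\sum_{\ell\ge1}\ell(\rho-1)^{\ell-1}\binom u\ell$, so that $\mathcal F=\mathcal W_j$ or $\mathcal W_j\mathcal W_k$ becomes a divisor sum over squarefree $d$ built from the $\mathcal P_i$. You truncate to some $\mathcal F_M$ and apply Bombieri--Vinogradov to the truncated part, whose moduli are $x^{o(1)}$. You then reassemble the main term by the Chinese remainder theorem and use disjointness of the prime sets to get $m_jm_k$ off the diagonal.

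The step you flag as the main obstacle does not work in the form you propose. A pointwise bound for $\mathcal W_j(p+2)$, combined with a count of primes having $\omega_i(p+2)\ge M$, only controls $\sum_p\log p\,\mathcal F(p+2)\mathbf 1_{\exists i:\,\omega_i(p+2)>M}$. What you actually must bound is $\sum_p\log p\,|\mathcal F(p+2)-\mathcal F_M(p+2)|$, since $\mathcal F_M$ is what you feed into Bombieri--Vinogradov. On the exceptional set $\mathcal F_M$ is an alternating truncated sum and is not bounded. If $\omega_i(n)=u$ is large, its $i$-th factor has size about $M(1-\rho)^{M-1}\binom uM/V_i$, and $u$ can be as large as a power of $L$. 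So ``number of exceptional primes times a sup bound'' loses far more than the saving $V_i^M/M!$. Separately, $u\rho^{u-1}\le1$ is false for $\rho>1/2$; the correct pointwise bound is $\sup_{j\ge1}j\rho^{j-1}$.

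\textbf{How the paper repairs it.} Majorize the error itself. The quantity $|\mathcal F-\mathcal F_R|$ is at most $\sum_{\max_i\ell_i>R}\prod_i\frac{\ell_i(1-\rho)^{\ell_i-1}}{V_i}\binom{\omega_i(n)}{\ell_i}$. This is a nonnegative combination of divisor-counting functions, which you average term by term. These omitted terms involve arbitrarily many primes, so the moduli run up to $3x$, beyond the useful range of Brun--Titchmarsh. The paper therefore averages over all integers $n\le 3x$, using $\lfloor 3x/d\rfloor\le 3x/d$ to get $\ll x/R!$. It loses a factor $L$ from the weight $\log p$ and compensates by taking $R=\lceil\log L\rceil$, so that $L/R!\to0$.

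Your Brun--Titchmarsh idea can still be kept for $d\le x^{1/2}$. For larger $d$, use the trivial count; there some $\ell_i\gg_K L^{0.8}$, so the factorial decay swamps the factor $L$. The same majorant also gives $\frac1{\varphi(P)}\sum_a|\mathcal F(a+2)-\mathcal F_R(a+2)|\ll 1/R!$. You need this to justify that the main terms reassemble ``up to the truncation error''.

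\textbf{Two simplifications in the paper.} First, the coefficients satisfy $|c_d|\ll R^{2K}$ uniformly, so Bombieri--Vinogradov applies directly and your Cauchy--Schwarz step is unnecessary. Second, for the diagonal the paper does not rerun the prime-versus-residue comparison for $\mathcal W_j^2$ and appeal to \eqref{eq:003}. It uses the pointwise inequality $(\mathcal W_j/m_j)^2\ll\mathcal W_j/m_j$ together with the first moment already established.
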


\begin{proof}
Let $\mathcal F$ be either $\mathcal W_j$ or $\mathcal W_j\mathcal W_k$ with $j\ne k$. There are $K$ prime sets in the first case and $2K$ in the second. Denote their number by $s$ and write them as $\mathcal P_1,\ldots,\mathcal P_s$. Thus
	\begin{equation*}
		\mathcal F(n)=\prod_{i=1}^{s}\frac{\omega_i(n)\rho^{\omega_i(n)-1}}{V_i}.
	\end{equation*}
	
	Choosing $\ell$ distinct primes from $\{p\in\mathcal P_i:p\mid n\}$ is equivalent to choosing a squarefree divisor of $n$ with $\ell$ prime factors, all in $\mathcal P_i$. Hence
	\begin{equation*}
		\binom{\omega_i(n)}{\ell}=\sum_{\substack{d\mid n,\,\mu^2(d)=1\\p\mid d\Rightarrow p\in\mathcal P_i,\,\omega(d)=\ell}}1.
	\end{equation*}
	Differentiating $\rho^u=\sum_{\ell=0}^{u}\binom u\ell(\rho-1)^\ell$ gives
	\begin{equation}\label{eq:005}
		u\rho^{u-1}=\sum_{\ell=1}^{u}\ell(\rho-1)^{\ell-1}\binom u\ell\qquad(u\ge0),
	\end{equation}
	where $u$ is an integer and the sum is empty when $u=0$.
	
	For an integer $R\ge1$, define
	\begin{equation*}
		\mathcal F_R(n)=\prod_{i=1}^{s}\biggl\{\frac1{V_i}\sum_{\ell=1}^{R}\ell(\rho-1)^{\ell-1}\binom{\omega_i(n)}{\ell}\biggr\},
	\end{equation*}
	with $\binom{\omega_i(n)}{\ell}=0$ for $\ell>\omega_i(n)$. Subtracting this expression from the full expansion gives
	\begin{equation*}
		\mathcal F(n)-\mathcal F_R(n)=\sum_{\substack{\ell_1,\ldots,\ell_s\ge1\\\max_{1\le i\le s}\ell_i>R}}\prod_{i=1}^{s}\biggl\{\frac{\ell_i(\rho-1)^{\ell_i-1}}{V_i}\binom{\omega_i(n)}{\ell_i}\biggr\}.
	\end{equation*}
	
	We first estimate this difference averaged over integers. For each $i$, we have
	\begin{equation*}
		\sum_{\substack{\mu^2(d)=1,\,\omega(d)=\ell\\p\mid d\Rightarrow p\in\mathcal P_i}}\frac1d\le\frac{V_i^\ell}{\ell!}.
	\end{equation*}
	Divisors supported on different $\mathcal P_i$ are coprime. Expanding the binomial coefficients and using $\lfloor3x/(d_1\cdots d_s)\rfloor\le3x/(d_1\cdots d_s)$ therefore gives
	\begin{equation*}
		\sum_{n\le3x}\prod_{i=1}^{s}\binom{\omega_i(n)}{\ell_i}\le3x\prod_{i=1}^{s}\frac{V_i^{\ell_i}}{\ell_i!}.
	\end{equation*}
	Since $V_i=\log2+o(1)<1$ for sufficiently large $x$, we have
	\begin{equation*}
		\sum_{\ell\ge1}\frac{((1-\rho)V_i)^{\ell-1}}{(\ell-1)!}\le e,\qquad \sum_{\ell>R}\frac{((1-\rho)V_i)^{\ell-1}}{(\ell-1)!}\le\frac e{R!}.
	\end{equation*}
	In every omitted term, at least one $\ell_i$ exceeds $R$. Summing over the possible choices of this index yields
	\begin{equation}
		\begin{aligned}
			\sum_{n\le3x}|\mathcal F(n)-\mathcal F_R(n)|&\le3x\sum_{\substack{\ell_1,\ldots,\ell_s\ge1\\\max_{1\le i\le s}\ell_i>R}}\prod_{i=1}^{s}\frac{((1-\rho)V_i)^{\ell_i-1}}{(\ell_i-1)!}\\
			&\le\frac{3se^s x}{R!}\ll_{K,\,\rho}\frac{x}{R!}.
		\end{aligned}
	\end{equation}
	
	We next estimate the difference over reduced residue classes. Let $P$ be the product of all primes used to define $\mathcal W_1,\ldots,\mathcal W_b$. Since $P$ is odd, the Chinese remainder theorem gives
	\begin{equation*}
		\frac1{\varphi(P)}\sum_{\substack{a\,(\textrm{mod}\, P)\\(a,\,P)=1}}\mathbf1_{d\mid a+2}=\frac1{\varphi(d)}\qquad(d\mid P).
	\end{equation*}
	For squarefree divisors supported on $\mathcal P_i$,
	\begin{equation*}
		\sum_{\substack{\mu^2(d)=1,\,\omega(d)=\ell\\p\mid d\Rightarrow p\in\mathcal P_i}}\frac1{\varphi(d)}\le\frac1{\ell!}\biggl(\sum_{p\in\mathcal P_i}\frac1{p-1}\biggr)^\ell.
	\end{equation*}
	Since the prime sum on the right tends to $\log2<1$ and $V_i\gg1$, the corresponding tail satisfies
	\begin{equation*}
		\frac1{V_i}\sum_{\ell>R}\frac{\ell(1-\rho)^{\ell-1}}{\ell!}\biggl(\sum_{p\in\mathcal P_i}\frac1{p-1}\biggr)^\ell\le\frac{e}{V_iR!}\ll\frac1{R!}.
	\end{equation*}
	Taking $R=0$ also bounds the full sum by an absolute constant. Applying these bounds to the expansion of $\mathcal F-\mathcal F_R$, with at least one $\ell_i>R$, gives
	\begin{equation*}
		\frac1{\varphi(P)}\sum_{\substack{a\,(\textrm{mod}\, P)\\(a,\,P)=1}}|\mathcal F(a+2)-\mathcal F_R(a+2)|\ll_{K,\,\rho}\frac1{R!}.
	\end{equation*}
	
	Now take $R=\lceil\log L\rceil$ and write $\mathcal F_R(n)=\sum_{d\mid n}c_d$. The disjointness of the prime sets gives a unique decomposition of each retained divisor into its factors supported on those sets. Consequently,
	\begin{equation*}
		d\le\exp(4KRL^{0.2})=x^{o(1)},\qquad |c_d|\ll_{K,\,\rho}R^{2K}.
	\end{equation*}
	Every retained $d$ is squarefree and odd, so $d\mid p+2$ is the reduced residue class $p\equiv-2\,(\textrm{mod}\, d)$. The Bombieri--Vinogradov theorem, with sufficient logarithmic saving to absorb $R^{2K}$, gives
	\begin{equation*}
		\frac1x\sum_{x<p\le2x}\log p\,\mathcal F_R(p+2)=\sum_d\frac{c_d}{\varphi(d)}+o(1).
	\end{equation*}
	The Chinese remainder theorem identifies the main term as
	\begin{equation*}
		\sum_d\frac{c_d}{\varphi(d)}=\frac1{\varphi(P)}\sum_{\substack{a\,(\textrm{mod}\, P)\\(a,\,P)=1}}\mathcal F_R(a+2).
	\end{equation*}
	
	Since $p+2\le3x$ for $x<p\le2x$ and sufficiently large $x$, the integer-average error is $O_{K,\,\rho}(L/R!)$ after multiplication by $\log p$ and division by $x$. Thus $L/R!\to0$ and the two remainder estimates give
	\begin{equation}\label{eq:rem-comparison}
		\frac1x\sum_{x<p\le2x}\log p\,\mathcal F(p+2)=\frac1{\varphi(P)}\sum_{\substack{a\,(\textrm{mod}\, P)\\(a,\,P)=1}}\mathcal F(a+2)+o(1).
	\end{equation}
	
	By the definition of $m_j$ and the Chinese remainder theorem,
	\begin{equation*}
		\frac1{\varphi(P)}\sum_{\substack{a\,(\textrm{mod}\, P)\\(a,\,P)=1}}\mathcal W_j(a+2)=m_j.
	\end{equation*}
	For $j\ne k$, the prime sets defining the two weights are disjoint, so
	\begin{equation*}
		\frac1{\varphi(P)}\sum_{\substack{a\,(\textrm{mod}\, P)\\(a,\,P)=1}}\mathcal W_j(a+2)\mathcal W_k(a+2)=m_jm_k.
	\end{equation*}
	Combining these identities with \eqref{eq:rem-comparison}, together with the estimate $m_j\gg 1$, we deduce that for $j\ne k$
	\begin{equation*}
		\begin{aligned}
			\frac1x\sum_{x<p\le2x}\log p\,\frac{\mathcal W_j(p+2)}{m_j}&=1+o(1),\\[1mm]
			\frac1x\sum_{x<p\le2x}\log p\,\frac{\mathcal W_j(p+2)\mathcal W_k(p+2)}{m_jm_k}&=1+o(1).
		\end{aligned}
	\end{equation*}
	
	Moreover, $0\le\mathcal W_j(n)\ll 1$ and $m_j\gg 1$ imply
	\begin{equation*}
		\biggl(\frac{\mathcal W_j(n)}{m_j}\biggr)^2\ll \frac{\mathcal W_j(n)}{m_j}.
	\end{equation*}
	The first-moment estimate therefore gives
	\begin{equation*}
		\frac1x\sum_{x<p\le2x}\log p\biggl(\frac{\mathcal W_j(p+2)}{m_j}\biggr)^2\ll 1.
	\end{equation*}
	
	Finally, expanding the square and using the prime number theorem, we get
	\begin{equation*}
		\begin{aligned}
			\frac1x\sum_{x<p\le2x}\log p\biggl(\frac1b\sum_{j=1}^{b}\frac{\mathcal W_j(p+2)}{m_j}-1\biggr)^2
			&=\frac1{b^2x}\sum_{j=1}^{b}\sum_{x<p\le2x}\log p\biggl(\frac{\mathcal W_j(p+2)}{m_j}\biggr)^2-\frac1b+o(1)
			\\[1,2mm]
			&\le\frac{C_{K,\,\rho}}b+o(1).
		\end{aligned}
	\end{equation*}
	This completes the proof of \eqref{eq:004}.
\end{proof}

\vskip 5mm

\section{\large The Type I estimate with auxiliary weights}
\label{sec:typeI}

First, we recall a theorem of Bombieri--Vinogradov type for the M\"{o}bius function (see e.g. \cite[Proposition 3]{MurVat18}):
for any $A>0$, there exists $B=B(A)>0$ such that
\begin{equation}\label{BV-mobius}
\begin{aligned}
\sum_{q\le x^{1/2}(\log x)^{-B}}\max_{(a,\,q)=1}\max_{y\le x}
 \biggl|\sum_{\substack{n\le y\\n\equiv a\,(\mathrm{mod}\, q)}}\mu(n)\biggr|
 \ll_A \frac{x}{(\log x)^A}.
\end{aligned}
\end{equation}

\begin{proposition}\label{prop:typeI}
Fix the weight $\mathcal W$ defined by \eqref{eq:mark} and let
$F(n)=\mu(n+2)\mathcal W(n+2)$. The first two terms in
Proposition~\ref{prop:vau} satisfy
\begin{equation}\label{eq:typeI}
\begin{aligned}
T_1(F),\,\ T_2(F)\ll_{A,\,K,\,\rho}\frac{x}{(\log x)^A}
\end{aligned}
\end{equation}
for any $A>0$.
\end{proposition}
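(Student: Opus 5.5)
The plan is to reduce $T_1(F)$ and $T_2(F)$ to sums of $\mu$ over arithmetic progressions whose moduli are below $x^{1/2-\delta}$, and then apply the Bombieri--Vinogradov estimate \eqref{BV-mobius}. I treat $T_1(F)$ in detail; $T_2(F)$ is the same argument with $d$ replaced by $d\ell\le UV=x^{2/5}$, which still leaves room below $x^{1/2}$.

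\textbf{Step 1: expanding the weight.} Following the proof of Proposition~\ref{prop:rem}, I replace $\mathcal W$ by the truncation $\mathcal W_R(m)=\sum_{f\mid m}c_f$ with $R=\lceil \log L\rceil$. The coefficients satisfy:
\begin{itemize}
\item $f$ is odd and squarefree, supported on the sets $\mathcal P_i$;
\item $f\le x^{o(1)}$ and $|c_f|\ll R^{2K}$.
\end{itemize}
This truncation cannot simply be bounded by its $L^1$ error, because Type I sums must be shown to be $\ll x L^{-A}$, not merely $o(x)$. So I keep the expansion $\mathcal W=\mathcal W_R+(\mathcal W-\mathcal W_R)$. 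The key point is that the tail $\mathcal W-\mathcal W_R$ is itself given by the same divisor expansion, with $\max_i\ell_i>R$. Choosing $R$ a large power of $L$, or more cleanly leaving $\omega_i$ untruncated and using that $d\le x^{o(1)}$ still holds for $\ell_i\le L^{0.5}$, the contributions with $\ell_i$ huge are negligible by the bound $\sum 1/d\le V^\ell/\ell!$. With $\ell_i\le L^{1/2}$ we have $f\le\exp(O(L^{0.7}))=x^{o(1)}$, and the coefficient sum $\sum|c_f|/f$ is bounded. Either way, I reduce to estimating, for each such $f$,
\[
\sum_{d\le U}\mu(d)R_Z(d)\sum_{\substack{x<dk\le2x\\ f\mid dk+2}}R_Z(k)(\log k)\,\mu(dk+2).
\]
Since $f$ is odd, $(f,d)$ divides $2$, so $(f,d)=1$. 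Writing $dk+2=fm$ gives $\mu(dk+2)=\mu(f)\mu(m)\mathbf 1_{(m,f)=1}$. The coprimality condition I remove by Möbius inversion over divisors $g\mid f$; each $g$ is again tiny.

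\textbf{Step 2: the sifted variable.} This is the step I expect to be the main obstacle. The condition $R_Z(k)$ cannot be expanded exactly, since $\sum_{e\mid (k,P(Z))}\mu(e)$ involves huge $e$. Here $P(Z)$ denotes the product of all primes $p\le Z$; this is distinct from the $P$ of \eqref{eq:markexp}. I would use the fundamental lemma of sieve theory with upper and lower $\beta$-sieve weights $\lambda_e^\pm$, supported on $e\mid P(Z)$ with $e\le x^{\varepsilon}$. This gives
\[
R_Z(k)=\sum_{e\mid k}\lambda^-_e+\theta(k),\qquad 0\le\theta(k)\le\sum_{e\mid k}(\lambda^+_e-\lambda^-_e).
\]
Since $s=\varepsilon L/\log Z=\varepsilon L^{0.76}$, the fundamental lemma shows that $\sum_k\theta(k)$ is $\ll (x/d)\exp(-s)$ times the main density. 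This is $\ll x L^{-A}/d$ even after multiplying by $\log k$ and summing $1/d$ over $d\le U$. Here I use only $|\mu|\le1$ and $0\le \mathcal W\ll1$, and I also need the analogous bound along the progression $fm\equiv2 \pmod d$, which the fundamental lemma provides uniformly.

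\textbf{Step 3: applying Bombieri--Vinogradov.} In the main part, the condition $e\mid k$ becomes $fm\equiv 2\pmod{de}$, that is, $m$ lies in a reduced class modulo $q=de$, with $(2,de)$ handled by the sifting since $Z>2$. Also $q\le x^{1/5+\varepsilon}$, or $q\le x^{2/5+\varepsilon}$ for $T_2$. The weight $\log k=\log((fm-2)/d)$ is removed by partial summation, which is where the $\max_{y\le x}$ in \eqref{BV-mobius} is used. The total is then bounded by
\[
\sum_{f,g}|c_f|\sum_{q\le x^{2/5+\varepsilon}}\tau(q)\max_{(a,q)=1}\max_{y\le 3x}\Big|\sum_{\substack{m\le y/(fg)\\ m\equiv a\,(q)}}\mu(m)\Big|\cdot L.
\]
I remove the divisor function $\tau(q)$ by Cauchy--Schwarz: I pair \eqref{BV-mobius} with the trivial bound $\sum_q\tau(q)^2\,x/q\ll xL^3$. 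Taking $A$ large in \eqref{BV-mobius} then absorbs the factors $R^{2K}$, $L^{O(1)}$ and $x^{o(1)}$ losses from $f$; for the last of these, note that $f\le x^{o(1)}$ only shrinks the range by $f$, and $\sum_f|c_f|/f\ll1$. This yields \eqref{eq:typeI}.

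The delicate points are all in Step 2: the uniformity of the sieve error in the progression, and the need for a power-of-$L$ saving rather than $o(1)$ in the treatment of $\mathcal W$.
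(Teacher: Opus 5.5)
Your outline follows the same architecture as the paper's proof. You truncate the divisor expansion of $\mathcal W$, remove $R_Z(k)$ by a sieve, strip the $f$-part from $\mu(dk+2)$, and apply \eqref{BV-mobius} to the resulting progressions, removing the multiplicity $\tau(q)$ by Cauchy--Schwarz and the weight $\log k$ by partial summation.

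The one genuine difference is the sieve. The paper uses plain inclusion--exclusion over squarefree $r\mid k$ with prime factors in $(2,Z]$ and $\omega(r)\le R=\lceil C\log L\rceil$. This works only because $\log Z=L^{0.24}$: it gives $r\le Z^{R+1}=x^{o(1)}$ and a Bonferroni error $(\sum_{p\le Z}p^{-1})^{R+1}/(R+1)!\le L^{-C\log(C/e)}$, so the argument is elementary and every auxiliary modulus is $x^{o(1)}$. Your fundamental lemma at level $x^{\varepsilon}$ gives the much stronger saving $\exp(-\varepsilon L^{0.76})$ and does not need $Z$ to be this small. The cost is the $\beta$-sieve machinery, which must hold uniformly in the densities $\mathbf 1_{p\nmid f}/p$ if you sieve after expanding $\mathcal W$ (sieving first avoids this). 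It also raises the moduli to $x^{2/5+\varepsilon}$, which \eqref{BV-mobius} still covers for small $\varepsilon$.

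Three local steps need repair.

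(i) Inserting $\mathbf 1_{(m,f)=1}=\sum_{g\mid(m,f)}\mu(g)$ leaves $\mu(m)$ on the class $m\equiv 0\pmod g$. That class is not reduced, and writing $m=gm'$ only reproduces the condition $(m',g)=1$. Use instead the paper's identity \eqref{eq:010}, $\mu(m)\mathbf 1_{(m,f)=1}=\sum_{a\mid m,\ p\mid a\Rightarrow p\mid f}\mu(m/a)$. Here $a$ is not bounded by $f$, so you must separate $fa\le x^{1/20}$ (handled by Bombieri--Vinogradov) from $fa>x^{1/20}$ (handled by Rankin's trick, as in \eqref{eq:supptl}).

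(ii) Since $2\mid P(Z)$, the class $2\overline{f}\pmod{de}$ is not reduced when $e$ is even. Keep $\mathbf 1_{2\nmid k}$ exactly and sieve only by $2<p\le Z$. This yields the reduced class $m\equiv(de+2)\overline{f}\pmod{2de}$.

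(iii) The bound $\sum 1/f\le V^{\ell}/\ell!$ controls only the $x/(df)$ part of $\#\{k:\ f\mid dk+2\}\ll x/(df)+1$. The $+1$ terms, summed over $d\le U$ and over the untruncated tail of Proposition~\ref{prop:rem} (whose divisors run up to $3x$), give a bound much larger than $x$. You need a pointwise majorant by a nonnegative divisor sum supported on $f\le x^{o(1)}$. The paper's bound $\bigl|u\rho^{u-1}-\sum_{j\le R}j(\rho-1)^{j-1}\binom{u}{j}\bigr|\le (R+1)\binom{u}{R+1}$, applied one factor at a time, does this. With it, $R\asymp\log L$ already suffices, because $R!$ exceeds every fixed power of $L$; the size of the saving was never the real obstruction.
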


\begin{proof}
First for $T_2$, let
\begin{equation*}
	\alpha_q=\sum_{\substack{u\ell=q\\u\le U,\,\ell\le V}}\mu(u)\Lambda(\ell),
\end{equation*}
whence $|\alpha_q|\le\log q$. Since $UV=x^{2/5}$ and $R_Z(q)=R_Z(k)=1$ force $q,k$ to be odd, we have
\begin{equation*}
	T_2(F)=\sum_{\substack{q\le x^{2/5}\\R_Z(q)=1}}\alpha_q\sum_{\substack{x<qk\le2x\\2\nmid k}}R_Z(k)\mu(qk+2)\mathcal W(qk+2).
\end{equation*}
We first replace the two weights by finite divisor sums. The argument below uses only $|\alpha_q|\le L$ and is uniform in the upper endpoint of the inner sum.

Fix $A>0$ and take $R=\lceil C\log L\rceil$, where $C$ is sufficiently large. For odd $k$, inclusion--exclusion gives
\begin{equation*}
	R_Z(k)=\sum_{\substack{r\mid k,\,\mu^2(r)=1\\p\mid r\Rightarrow2<p\le Z\\\omega(r)\le R}}\mu(r)+O\biggl(\sum_{\substack{r\mid k,\,\mu^2(r)=1\\p\mid r\Rightarrow2<p\le Z\\\omega(r)=R+1}}1\biggr).
\end{equation*}
Every integer $r$ occurring in either sum above is a product of at most $R+1$ distinct primes in $(2,Z]$, so $r\le Z^{R+1}=x^{o(1)}$.

%Since $\mathcal W(n)\ll_{K,\,\rho}1$, the contribution of the remainder to $T_2(F)$ is bounded by
%\begin{equation}\label{eq:bontail}
%	\begin{aligned}
%		\sum_{q\le x^{2/5}}|\alpha_q|
%		\sum_{\substack{\mu^2(r)=1\\p\mid r\Rightarrow2<p\le Z\\\omega(r)=R+1}}\biggl(\frac{x}{qr}+1\biggr)
%		\ll xL^2
%		\frac{\Bigl(\sum_{p\le Z}p^{-1}\Bigr)^{R+1}}{(R+1)!}
%		+x^{2/5+o(1)}
%		\ll_A xL^{-A},
%	\end{aligned}
%\end{equation}
%where Stirling's formula gives the last bound for $R=\lceil C\log L\rceil$ with sufficiently large $C$. 

Next, we truncate the divisor expansion of $\mathcal W$. To estimate the resulting error under the condition $r\mid k$, we employ the bound
\begin{equation*}
	\biggl|u\rho^{u-1}-\sum_{j=1}^R j(\rho-1)^{j-1}\binom uj\biggr|\le(R+1)\binom u{R+1}\qquad(u\ge0).
\end{equation*}
Apply this bound to each factor of $\mathcal W$, expanding the difference of the products one factor at a time. Expanding the truncated factors in absolute value and bounding the untruncated factors gives finite divisor sums satisfying
\begin{equation*}
	\biggl|\mathcal W(n)-\sum_{d\mid n}c_d\biggr|\le\sum_{d\mid n}e_d,\qquad e_d\ge0.
\end{equation*}
The coefficients are supported on squarefree $d$ whose prime factors belong to $\bigcup_i\mathcal P_i$, and
\begin{equation*}
	d\le\exp\bigl(O_K(RL^{0.2})\bigr),\qquad \sum_d\frac{|c_d|}{d}\ll_{K,\,\rho}1,\qquad \sum_d(|c_d|+e_d)\le\exp\bigl(O_{K,\,\rho}(RL^{0.2})\bigr)=x^{o(1)}.
\end{equation*}
The remainder has the stronger bound
\begin{equation*}
	\sum_d\frac{e_d}{d}\ll_{K,\,\rho}\sum_{i=1}^K\frac{V_i^{R+1}}{R!}\prod_{h<i}\biggl(\sum_{j=1}^R\frac{jV_h^j}{j!}\biggr)\ll_{K,\,\rho}\frac1{R!},
\end{equation*}
because $V_i=\log2+o(1)<1$. This follows by expanding the factor $\binom{\omega_i(n)}{R+1}$ in the remainder and the factors $\binom{\omega_h(n)}j$ preceding it.

For sufficiently large $x$, all prime factors of $d$ lie in $(2,Z]$. Thus $(q,\,d)=1$ whenever $R_Z(q)=1$. The conditions $r\mid k$ and $d\mid qk+2$ are incompatible if $(r,\,d)>1$; otherwise, together with oddness, they specify one residue class modulo $2rd$. Hence
\begin{equation*}
	\#\bigl\{k:\ x<qk\le2x,\,2\nmid k,\,r\mid k,\,d\mid qk+2\bigr\}\ll\frac{x}{qrd}+1.
\end{equation*}

Moreover, the absolute coefficients of the truncated sieve satisfy
\begin{equation*}
	\sum_{\substack{\mu^2(r)=1\\p\mid r\Rightarrow2<p\le Z\\\omega(r)\le R}}\frac1r\le\prod_{2<p\le Z}\biggl(1+\frac1p\biggr)\ll\log Z.
\end{equation*}
We now estimate both truncation errors using the preceding counting bound, with $d=1$ for the remainder from $R_Z(k)$. In the next display, $r$ is squarefree and every prime factor of $r$ lies in $(2,Z]$. Since $\mathcal W(n)\ll_{K,\,\rho}1$, the total error is bounded, up to a factor depending only on $K$ and $\rho$, by
\begin{equation}\label{eq:bontail}
	\begin{aligned}
		\sum_{q\le x^{2/5}}|
		&\alpha_q|\biggl\{\sum_{\omega(r)=R+1}\biggl(\frac{x}{qr}+1\biggr)+\sum_{\omega(r)\le R}\sum_d e_d\biggl(\frac{x}{qrd}+1\biggr)\biggr\}\\
		&\ll xL^2\biggl\{\frac{\bigl(\sum_{p\le Z}p^{-1}\bigr)^{R+1}}{(R+1)!}+\frac{\log Z}{R!}\biggr\}+x^{2/5+o(1)} \ll xL^{-A}.
	\end{aligned}
\end{equation}
The terms with $+1$ contribute $x^{2/5+o(1)}$, since $r\le Z^{R+1}=x^{o(1)}$ and $\sum_d e_d=x^{o(1)}$.

To justify the last bound, use $\sum_{p\le Z}p^{-1}\le\log L$ for sufficiently large $x$ and $(R+1)!\ge((R+1)/e)^{R+1}$. For $C>e$ and $R=\lceil C\log L\rceil$, these inequalities give
\begin{equation*}
	\frac{\bigl(\sum_{p\le Z}p^{-1}\bigr)^{R+1}}{(R+1)!}\le\biggl(\frac eC\biggr)^{R+1}\le L^{-C\log(C/e)}.
\end{equation*}
Choose $C$ so that $C\log(C/e)>A+2$. Moreover, $R!$ exceeds every fixed power of $L$ for sufficiently large $x$, which also gives $L^2\log Z/R!\ll_A L^{-A}$.
Therefore by the above argument, we obtain
\begin{equation*}
	\begin{aligned}
		T_2(F)&=\sum_{\substack{q\le x^{2/5}\\R_Z(q)=1}}\alpha_q\sum_{\substack{\mu^2(r)=1\\p\mid r\Rightarrow2<p\le Z\\\omega(r)\le R}}\mu(r)\sum_{\substack{d\\(d,\,r)=1}}c_d
		\sum_{\substack{x<qk\le2x\\2\nmid k\\r\mid k,\,d\mid qk+2}}\mu(qk+2)+O_{A,\,K,\,\rho}(xL^{-A}).
	\end{aligned}
\end{equation*}

We now estimate the remaining M\"obius sums, uniformly when the upper endpoint $2x$ is replaced by $y\in[x,\,2x]$. We first treat $T_2$; the same estimates will then give $T_1$ by partial summation.

Write $k=rj$. Since $d$ is odd, $d\mid qrj+2$ implies $(d,\,qr)=1$. For squarefree $d$, we have
\begin{equation}\label{eq:010}
	\mu(dv)=\mu(d)\mu(v)\mathbf1_{(v,\,d)=1}=\mu(d)\sum_{\substack{a\mid v,\, p\mid a\Rightarrow p\mid d}}\mu(v/a).
\end{equation}
Here $a$ may contain higher prime powers: for each prime dividing both $d$ and $v$, the two nonzero local terms in the sum are $1$ and $-1$.

Substituting $qrj+2=dat$ therefore gives
\begin{equation*}
	\begin{aligned}
		&\sum_{\substack{x<qrj\le y\\2\nmid j,\,d\mid qrj+2}}\mu(qrj+2)=\mu(d)\sum_{\substack{a\ge1\\p\mid a\Rightarrow p\mid d}}\ \sum_{\substack{(x+2)/(da)<t\le(y+2)/(da)\\dat\equiv qr+2\,(\textrm{mod}\,2qr)}}\mu(t).
	\end{aligned}
\end{equation*}
The congruence is equivalent to $j$ being odd. Moreover, $(da,\,2qr)=(qr+2,\,2qr)=1$, so the inner sum runs over a reduced residue class modulo $2qr$. We estimate it separately according as $da\le x^{1/20}$ or $da>x^{1/20}$.

First suppose that $da\le x^{1/20}$ and take $Y=(2x+2)/(da)$. Since $qr\le x^{2/5}Z^R=x^{2/5+o(1)}$, we have
\begin{equation*}
	Y\ge2x^{19/20},\qquad \frac{2qr}{Y^{1/2}}\ll x^{-3/40+o(1)}.
\end{equation*}
Thus the modulus $2qr$ satisfies $2qr\le Y^{1/2}L^{-B}$ for any fixed $B>0$, as required in \eqref{BV-mobius}.

For fixed $d$ and $a$, group the terms with the same product $qr$. There are at most $\tau(qr)$ such pairs, and $|\alpha_q\mu(r)|\le L$, so the combined coefficient has absolute value at most $L\tau(qr)$. We therefore need to include the factor $\tau(qr)$ in the Bombieri--Vinogradov estimate. Let
\begin{equation*}
	E(Y;m):=\max_{(c,\,m)=1}\max_{ z\le Y}
	\biggl|\sum_{\substack{t\le z\\t\equiv c\,(\textrm{mod}\,m)}}\mu(t)\biggr|,
\end{equation*}
and hence $E(Y;m)\ll Y/m+1$ trivially. By applying \eqref{BV-mobius}, together with Cauchy's inequality, we deduce that
\begin{equation*}\label{eq:wtbv}
	\begin{aligned}
		\biggl(\sum_{m\le Q}\tau(m)E(Y;m)\biggr)^2&\le\biggl(\sum_{m\le Q}E(Y;m)\biggr)\biggl(\sum_{m\le Q}\tau(m)^2E(Y;m)\biggr)\\
		&\ll YL^{-A_0}\biggl(Y\sum_{m\le Q}\frac{\tau(m)^2}{m}+\sum_{m\le Q}\tau(m)^2\biggr)\\
		&\ll Y^2L^{-A_0+4}.
	\end{aligned}
\end{equation*}
for any $Q\le Y^{1/2}L^{-B}$. Taking $A_0>2A+4$, one finds
\begin{equation*}
	\sum_{m\le Q}\tau(m)E(Y;m)\ll_A YL^{-A}.
\end{equation*}

Since $(da,\,2qr)=(qr+2,\,2qr)=1$, the definition of $E$ gives, for $x\le y\le2x$ and $Y=(2x+2)/(da)$,
\begin{equation*}
	\biggl|\sum_{\substack{(x+2)/(da)<t\le(y+2)/(da)\\dat\equiv qr+2\,(\textrm{mod}\,2qr)}}\mu(t)\biggr|\le2E(Y;2qr).
\end{equation*} 
After taking absolute values, enlarge the ranges of $q$ and $r$ to $q\le x^{2/5}$ and $r\le Z^R$. Grouping terms with the same product $qr$ and applying the weighted estimate with $A+1$ in place of $A$ gives
\begin{equation*}
	\begin{aligned}
		2\sum_d|c_d|\sum_{\substack{a\ge1,\,da\le x^{1/20}\\p\mid a\Rightarrow p\mid d}}\sum_{\substack{q\le x^{2/5}\\r\le Z^R}}|\alpha_q|E\biggl(\frac{2x+2}{da};2qr\biggr)
		\ll xL^{-A}\sum_d\frac{|c_d|}{d}\sum_{\substack{a\ge1\\p\mid a\Rightarrow p\mid d}}\frac1a\ll xL^{-A},
	\end{aligned}
\end{equation*}
where the last inequality uses $\sum_d|c_d|/d\ll 1$ and
\begin{equation*}
	\sum_{\substack{a\ge1\\p\mid a\Rightarrow p\mid d}}\frac1a=\prod_{p\mid d}(1-p^{-1})^{-1}\ll1.
\end{equation*}

It remains to treat $da>x^{1/20}$. Since $d=x^{o(1)}$, we have $a>x^{1/25}$ for sufficiently large $x$. Rankin's inequalities give
\begin{equation}\label{eq:supptl}
	\begin{aligned}
		\sum_{\substack{a>x^{1/25}\\p\mid a\Rightarrow p\mid d}}\frac1a&\le x^{-1/50}\prod_{p\mid d}(1-p^{-1/2})^{-1}\ll x^{-1/50},\\
		\sum_{\substack{a\le3x\\p\mid a\Rightarrow p\mid d}}1&\le(3x)^{1/20}\prod_{p\mid d}(1-p^{-1/20})^{-1}\ll x^{1/20}.
	\end{aligned}
\end{equation}
Both products are $1+o(1)$ uniformly in $d$, since $\omega(d)=O_K(\log L)$ and every prime dividing $d$ exceeds $\exp(L^{0.1})$.

The progression contains $O(x/(daqr)+1)$ terms and is empty when $da>3x$. In the following sums, $r$ runs over the squarefree integers satisfying $\omega(r)\le R$ and $p\mid r\Rightarrow2<p\le Z$. The absolute contribution from $da>x^{1/20}$ is bounded by 
\begin{equation*}
	\begin{aligned}
		\sum_{q\le x^{2/5}}|\alpha_q|
		\sum_r\sum_d|c_d|\sum_{\substack{x^{\frac{1}{20}}<da\le3x\\p\mid a\Rightarrow p\mid d}}\biggl(\frac{x}{daqr}+1\biggr)
		\ll x^{\frac{49}{50}}L^2\biggl(\sum_r\frac1r\biggr)\sum_d\frac{|c_d|}{d}+x^{9/20+o(1)}\ll xL^{-A},
	\end{aligned}
\end{equation*}
where we have used \eqref{eq:supptl}. Here $\sum_r1/r\ll\log Z$, $\sum_r1\le Z^R=x^{o(1)}$, and $\sum_d|c_d|=x^{o(1)}$. Together with the truncation errors, this proves $T_2(F)\ll_{A,\,K,\,\rho}xL^{-A}$, uniformly in the upper endpoint $y$.

Finally, for $T_1$, use $\log k=\log(qk)-\log q$. Partial summation handles $\log(qk)$ with $\alpha_q=\mu(q)\mathbf1_{q\le U}$; the term involving $\log q$ uses the same estimate with $\alpha_q=\mu(q)\log q\,\mathbf1_{q\le U}$.

Both choices satisfy $|\alpha_q|\le L$. Applying the preceding bounds with $A+1$ in place of $A$ absorbs the factor $O(L)$ from partial summation and gives
\begin{equation*}
	|T_1(F)|+|T_2(F)|\ll_{A,\,K,\,\rho}xL^{-A},
\end{equation*}
which completes the proof of Proposition \ref{prop:typeI}.
\end{proof}

\vskip 5mm

\section{\large The Type II estimate}

\subsection{Reduction to a determinant square}
\label{sec:cauchy}

By Proposition~\ref{prop:vau} and \eqref{eq:typeI}, it remains to estimate the Type II sum
\begin{equation}\label{eq:remain}
	T_3(F)=\sum_{\substack{m>U,\,n>V\\x<mn\le2x}}\mu(m)R_Z(m)c_V(n)R_Z(n)\mu(mn+2)\mathcal W(mn+2).
\end{equation}
We first separate the variables $m,n$ in the condition $x<mn\le2x$, and then expand $\mathcal W(mn+2)$ to prepare for Cauchy's inequality.

Choose a smooth function $0\le\psi\le1$ equal to $\mathbf1_{(1,\,2]}$ outside the intervals $[1-L^{-B},\,1+L^{-B}]$ and $[2-L^{-B},\,2+L^{-B}]$, with $\psi^{(j)}\ll_j L^{Bj}$. Replacing $\mathbf1_{x<mn\le2x}$ by $\psi(mn/x)$ changes \eqref{eq:remain} by
\begin{equation*}
	O_{K,\,\rho}(xL^{2-B}+x^{1/2}L),
\end{equation*}
since $c_V(n)\ll L$, $\mathcal W\ll_{K,\,\rho}1$, and $\sum_{k\in I}\tau(k)\ll |I|L+x^{1/2}$ for intervals $I\subset[x/2,\,3x]$.

With $\widehat\psi(v)=(2\pi)^{-1}\int_0^\infty\psi(u)u^{-iv}\,du/u$, Mellin inversion gives
\begin{align*}
	\psi\Big(\frac{mn}{x}\Big)&=\int_{|v|\le L^{2B}}\widehat\psi(v)\Big(\frac{mn}{x}\Big)^{iv}\,dv+O_B(L^{-B}),\quad \text{and}\quad
	\int_{|v|\le L^{2B}}|\widehat\psi(v)|\,dv\ll_B\log L.
\end{align*}
The first error contributes $O_{K,\,\rho}(xL^{2-B})$ after summation. Taking $B$ sufficiently large therefore gives any prescribed logarithmic saving in both errors.

Split $m,n$ into dyadic intervals $(M,\,2M]$, $(N,\,2N]$, retaining the restrictions $m>U$ and $n>V$. It suffices to estimate, uniformly for $|v|\le L^{2B}$,
\begin{equation}\label{eq:011}
	\begin{aligned}
		T&=\sum_{m,\,n}\alpha_m\beta_n\mu(mn+2)\mathcal W(mn+2),\\
		\alpha_m&=\mu(m)R_Z(m)m^{iv},\qquad \beta_n=c_V(n)R_Z(n)n^{iv}.
	\end{aligned}
\end{equation}
Here $X=MN\asymp x$, $M,N\ge x^{1/5}/2$, and $|\alpha_m|\le1$, $|\beta_n|\ll L$. In particular, the later length conditions hold with $\delta=1/6$.

We next expand the factors $\omega_i(mn+2)$ in \eqref{eq:mark}. For each choice $p_i\in\mathcal P_i$, write $a=p_1\cdots p_K$, and put $\omega(h)=\sum_i\omega_i(h)$. Since the sets $\mathcal P_i$ are disjoint, $a$ is squarefree and $\mu(a)=(-1)^K$.

Writing $mn+2=ah$, the identities $\mu(ah)=(-1)^K\mu(h)$ and $\omega(ah)=K+\omega(h)$ hold when $(a,\,h)=1$. Terms with $(a,\,h)>1$ have $\mu(ah)=0$. Hence the expansion is exactly
\begin{equation*}
	T=\frac{(-1)^K}{\prod_iV_i}\sum_{\substack{p_i\in\mathcal P_i\\1\le i\le K}}\ \sum_{\substack{m,\,n,\,h\\mn+2=ah\\(a,\,h)=1}}\alpha_m\beta_n\mu(h)\rho^{\omega(h)}.
\end{equation*}

Before applying Cauchy's inequality in $h$, we remove the restriction $(a,\,h)=1$. Every added term satisfies $p_i^2\mid mn+2$ for some selected prime $p_i$. Moreover, $\omega((mn+2)/a)\ge\omega(mn+2)-K$, so we get
\begin{equation*}
	\frac1{\prod_iV_i}\sum_{\substack{p_i\in\mathcal P_i\ (1\le i\le K)\\a\mid mn+2}}\rho^{\omega((mn+2)/a)}\le\mathcal W(mn+2)\ll 1.
\end{equation*}

For each $m$ with $\alpha_m\ne0$ and each $p\in\bigcup_i\mathcal P_i$, we have $(m,\,p)=1$, since $p<Z$. Thus $p^2\mid mn+2$ restricts $n$ to one residue class modulo $p^2$, and
\begin{equation*}
	\#\{N<n\le2N:\ p^2\mid mn+2\}\le N/p^2+1\ll N/p^2,
\end{equation*}
where $N\gg p^2$. The total absolute contribution of the added terms satisfies
\begin{equation}\label{eq:sqerror}
	\begin{aligned}
		\frac1{\prod_iV_i}\sum_{\substack{p_i\in\mathcal P_i\\1\le i\le K}}\, \sum_{\substack{m,\,n,\,h\\mn+2=ah\\(a,\,h)>1}}|\alpha_m\beta_n\mu(h)|\rho^{\omega(h)}
		\ll XL\sum_{p\in\bigcup_i\mathcal P_i}p^{-2}
		\ll XL\exp(-L^{0.1}).
	\end{aligned}
\end{equation}
Consequently, one has
\begin{equation*}
	\begin{aligned}
		T=\frac{(-1)^K}{\prod_iV_i}\sum_{h\ge1}\mu(h)\rho^{\omega(h)}\sum_{\substack{p_i\in\mathcal P_i\ (1\le i\le K)\\m,\,n\\mn+2=ah}}\alpha_m\beta_n+O_{K,\,\rho}\bigl(XL\exp(-L^{0.1})\bigr).
	\end{aligned}
\end{equation*}
We now split $a$ into dyadic intervals and apply Cauchy's inequality in $h$, using $|\mu(h)\rho^{\omega(h)}|\le1$.

Take a smooth dyadic partition with real factors $\eta(a/Y)$, where $\eta$ is supported on $[1,4]$
and has bounded derivatives. There are $O_K(L)$ relevant values of $Y$, satisfying $L^{0.1}\ll\log Y\ll L^{0.2}.$
%\begin{equation*}
%\begin{aligned}
%L^{0.1}\ll\log Y\ll L^{0.2}.
%\end{aligned}
%\end{equation*}
The corresponding part can be written as
\begin{equation*}
\begin{aligned}
T_Y=(-1)^K\sum_{h\ll X/Y}\mu(h)\rho^{\omega(h)}
 \sum_{mn+2=ah}\eta(a/Y)\Bigl(\prod_iV_i^{-1}\Bigr)\alpha_m\beta_n.
\end{aligned}
\end{equation*}
Cauchy's inequality and $|\mu(h)\rho^{\omega(h)}|\le1$ give
\begin{equation}\label{eq:CS}
\begin{aligned}
|T_Y|^2\ll \frac XY Q_Y,
\end{aligned}
\end{equation}
where
\begin{equation}\label{eq:013}
\begin{aligned}
Q_Y=\Bigl(\prod_iV_i^{-2}\Bigr)
 \sum_{\substack{a,\,b,\,m,\,n,\,r,\,s\\[0,6mm] \frac{mn+2}{a}=\frac{rs+2}{b}\in\mathbb N^*}}
 \eta\Big(\frac{a}{Y}\Big)\eta\Big(\frac{b}{Y}\Big)
 \alpha_m\overline{\alpha_r}\beta_n\overline{\beta_s}\ge0.
\end{aligned}
\end{equation}
Note that $a$ and $b$ both are a product of one prime $p_i\in\mathcal P_i$ for each $i$. 
To replace the condition $\frac{mn+2}{a}=\frac{rs+2}{b}\in\mathbb N^*$ by the determinant equation $bmn-ars=2(a-b)$, we first bound the total absolute contribution of the terms with $(a,\,b)>1$.

\begin{lemma}\label{lem:shared}
In \eqref{eq:013}, the total absolute contribution of terms for which $a,b$ share a prime is
$O_{A,K}(XYL^{-A})$, for every fixed $A>0$.
\end{lemma}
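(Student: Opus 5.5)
The plan is to bound the shared-prime terms trivially, after taking absolute values and using $|\alpha_m|\le1$, $|\beta_n|\ll L$ and $\prod_iV_i^{-2}\ll_K1$. The saving will come from the rarity of a second prime divisor in a fixed $\mathcal P_i$. Suppose $a$ and $b$ share a prime. Since each of them contains exactly one prime from each $\mathcal P_i$ and the sets are disjoint, they share some $p=p_i=q_i\in\mathcal P_i$. Put $h=(mn+2)/a=(rs+2)/b$. Then $p\mid mn+2$ and $p\mid rs+2$, while the quotients $(mn+2)/a$ and $(rs+2)/b$ coincide.

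Organize the count by $h$. For fixed $a,b$, the tuple is determined by $h$ together with a factorization $ah=mn$ and a factorization $bh=rs$. The number of factorizations is at most $\tau(ah)\tau(bh)$. The total is therefore
\begin{equation*}
\ll L^2\sum_{p}\ \sum_{\substack{a,\,b\asymp Y\\ p\mid a,\,p\mid b}}\ \sum_{h\ll X/Y}\tau(ah)\tau(bh),
\end{equation*}
where $a$ and $b$ run over the admissible prime products. Since $\tau(ah)\le\tau(a)\tau(h)\le2^K\tau(h)$, and likewise for $b$, the inner sum is $\ll_K (X/Y)L^3$.

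The number of pairs $(a,b)$ sharing $p\in\mathcal P_i$ is at most
\begin{equation*}
\Bigl(\prod_{j\ne i}\#\mathcal P_j\Bigr)^2\#\mathcal P_i ,
\end{equation*}
restricted to $a,b\asymp Y$. The dyadic restriction $a\asymp Y$ is what makes the count of order $Y$: the number of admissible $a\asymp Y$ is $\ll Y\prod_jV_j/\min$-type density, i.e. $\ll Y$ up to factors $L^{O(1)}$. This crude bound loses, so I would redo the count with the density weights. Writing $a=p\,a'$ and $b=p\,b'$, the number of pairs is
\begin{equation*}
\ll\sum_{p\in\mathcal P_i}\#\{a'\asymp Y/p\}^2\ll\sum_{p}\frac{Y^2}{p^2}\,L^{O(1)}\ll Y^2L^{O(1)}\exp(-L^{a_i}).
\end{equation*}
Combined with the $h$-sum this gives $XYL^{O(1)}\exp(-L^{0.1})$, which is much smaller than $XYL^{-A}$.

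The main obstacle is exactly this counting of $a$'s in a dyadic range: I need $\#\{a\asymp T\}\ll T\,L^{O(1)}$ uniformly, and the factor $1/p^2$ is what supplies the saving. Since the $a$'s are integers, trivially $\#\{a'\asymp Y/p\}\le 4Y/p$, so that part is fine. The $h$-sum also needs care, because I must avoid the crude bound of $\tau(ah)$ by $\tau(h)$ times a large factor; with $\tau(a)\le2^K$ this is fine. Alternatively, if the paper intends to count via $m,n$ rather than $h$, I would fix $(m,n)$ with $p^2\mid(mn+2)$-type restrictions. But the route through $h$ above seems cleanest, and it gives the lemma with an exponential saving.
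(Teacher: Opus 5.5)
Your architecture matches the paper's: fix $(a,b)$, sum over the common quotient $h\asymp X/Y$, bound the number of $(m,n,r,s)$ by divisor functions, and gain $\sum_p p^{-2}\ll e^{-L^{0.1}}$ from the count $\ll Y^2\sum_p p^{-2}$ of pairs $(a,b)$ sharing a prime $p$. Your corrected count of such pairs, via $a=pa'$, $b=pb'$ and $\#\{a'\asymp Y/p\}\le 4Y/p$, is fine.

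\textbf{The gap is in the $h$-sum.} From $h=(mn+2)/a$ you get $mn=ah-2$ and $rs=bh-2$, not $mn=ah$ and $rs=bh$. So for fixed $(a,b,h)$ the number of tuples is at most $\tau(ah-2)\tau(bh-2)$. The submultiplicativity $\tau(ah)\le\tau(a)\tau(h)\le 2^K\tau(h)$ that your bound rests on has no analogue for these shifted arguments. A pointwise bound does not rescue it either: $\tau(n)\le\exp(O(L/\log L))$ loses far more than the saving $\exp(-cL^{0.1})$ you need to keep. What is actually required is an averaged divisor bound along the progression $\{ah-2\}$ with modulus $a\asymp Y$, uniform in $a$. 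Your proposal gives no argument for this. Your remark that ``the $h$-sum needs care'' addresses the harmless factor $\tau(a)$, not the shift by $-2$.

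\textbf{The missing input.} The paper proves that for odd $\ell\asymp Y=x^{o(1)}$,
\[
\sum_{h\ll X/Y}\tau(\ell h-2)^2\ll (X/Y)L^3 .
\]
Use $\tau^2\le\tau_4$. In each ordered factorization $n=d_1d_2d_3d_4$, delete a largest factor. What remains is a divisor $d\mid n$ with $d\ll X^{3/4}$, and at most $4\tau_3(d)$ factorizations give the same $d$.

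If $d\mid\ell h-2$, then $(d,\ell)\mid 2$, so $(d,\ell)=1$ because $\ell$ is odd; here $a,b$ are odd since all primes in $\bigcup_i\mathcal P_i$ are large. Hence the condition fixes $h$ modulo $d$, leaving at most $X/(Yd)+1$ values of $h$. Summing $4\tau_3(d)\bigl(X/(Yd)+1\bigr)$ over $d\ll X^{3/4}$ gives $(X/Y)L^3+X^{3/4}L^2\ll(X/Y)L^3$.

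Cauchy--Schwarz in $h$ then gives $\sum_h\tau(ah-2)\tau(bh-2)\ll (X/Y)L^3$. With $|\beta_n\beta_s|\ll L^2$, the contribution for each fixed pair $(a,b)$ is $O((X/Y)L^5)$. Substitute this for your $\tau(ah)\tau(bh)$ step. The rest of your argument then goes through and yields $XYL^{5}e^{-cL^{0.1}}\ll_{A,K}XYL^{-A}$.
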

\begin{proof}
We first show that, for every odd $\ell\asymp Y=x^{o(1)}$,
\begin{equation}\label{eq:014}
\begin{aligned}
\sum_{\substack{h\ll X/Y\\\ell h>2}}\tau(\ell h-2)^2
 \ll(X/Y)L^3.
\end{aligned}
\end{equation}
Use $\tau^2\le\tau_4$ and delete a largest factor from each ordered
four-factor decomposition. The remaining product satisfies
$d\ll X^{3/4}$, with at most $4\tau_3(d)$ choices.

If $d\mid\ell h-2$, then $(d,\ell)\mid2$.
Since $\ell$ is odd, $(d,\ell)=1$. Thus
\begin{equation*}
\begin{aligned}
\sum_{d\ll X^{3/4}}4\tau_3(d)\biggl(\frac{X}{Yd}+1\biggr)
 \ll(X/Y)L^3+X^{3/4}L^2\ll(X/Y)L^3.
\end{aligned}
\end{equation*}
For fixed $a,\,b$, let $C_1>0$ depend only on the original
coefficient-size exponent, not on $K$. Cauchy's inequality and
\eqref{eq:014} bound the absolute coefficient sum over the common
quotient $h$ by $O((X/Y)L^{C_1})$.
The number of product pairs sharing a prime in $\bigcup_i\mathcal P_i$ is at most
\begin{equation}\label{eq:abcoll}
\begin{aligned}
C_{K,\,\rho}Y^2\sum_{p\in\cup_i\mathcal P_i}p^{-2}
 \ll_{K,\,\rho}Y^2e^{-cL^{0.1}}.
\end{aligned}
\end{equation}
Combining \eqref{eq:014} with \eqref{eq:abcoll} proves the assertion, including the case $a=b$.
\end{proof}

We may therefore restrict the sum defining $Q_Y$ to $(a,\,b)=1$, with an error $O_{A,\,K}(XYL^{-A})$. For these terms, the condition $\frac{mn+2}{a}=\frac{rs+2}{b}\in\mathbb N^*$ is equivalent to
\begin{equation}\label{eq:det2}
\begin{aligned}
t=bmn-ars=2(a-b)=-2(b-a).
\end{aligned}
\end{equation}
Indeed, the identity is $b(mn+2)=a(rs+2)$. Reduction modulo $a$ and $b$, using $(a,b)=1$, gives
$a\mid mn+2$ and $b\mid rs+2$. Division by $ab$ then gives the same positive integer $h$ on both
sides. This equivalence is not used when $a$ and $b$ have a common prime divisor.

Fix a real $\psi_1\in C_c^\infty(\mathbb R)$ equal to one on $[-4,4]$, and define
\begin{equation}\label{eq:major-arcs-kernel}
\begin{aligned}
\mathfrak M_1&=\bigcup_{\substack{1\le k\le L^{A_0}\\ (h,\,k)=1}}
 \{\phi\in\mathbb T:\ \|\phi-h/k\|\le 2L^{A_0}/Y\},\\
\mathfrak M_{-2}&=\{\theta\in\mathbb T:\ -2\theta\in\mathfrak M_1\},\,\qquad
 \psi_{-2}(u)=\psi_1(-u/2),\\
H_{-2}(t;\,a,\,b)&=\psi_{-2}(t/Y)
 \int_{\mathfrak M_{-2}}e\bigl(\theta(t+2b-2a)\bigr)\,d\theta.
\end{aligned}
\end{equation}
We also define the quantity $Q_Y^{\mathrm{maj}}$ by
\begin{equation*}
	\begin{aligned}
		Q_Y^{\mathrm{maj}}:=\biggl(\prod_{i=1}^{K}V_i^{-2}\biggr)
		\sum_{\substack{a=p_1\cdots p_K,\,b=q_1\cdots q_K\\p_i,\,q_i\in\mathcal P_i\ (1\le i\le K)}}\eta\Big(\frac{a}{Y}\Big)\eta\Big(\frac{b}{Y}\Big)
		\sum_{\substack{M<m,\,r\le2M\\N<n,\,s\le2N}}\alpha_m\overline{\alpha_r}\beta_n\overline{\beta_s}H_{-2}(bmn-ars;a,\,b).
	\end{aligned}
\end{equation*}
The coefficients vanish outside their prescribed supports, and pairs with $(a,\,b)>1$ are included.

\begin{proposition}\label{prop:D}
Fix $\delta,C,A>0$, and let $M,N\ge x^\delta$ with $MN\asymp x$.
Suppose that $\alpha,\beta$ are supported on sifted integers in
subintervals of their dyadic intervals, with
$|\alpha_m|,|\beta_n|\le L^C$.
First choose $A_0$ sufficiently large, and then choose a sufficiently
large fixed $K$. These choices give
\begin{equation*}
\begin{aligned}
|Q_Y-Q_Y^{\mathrm{maj}}|\ll_A XYL^{-A}.
\end{aligned}
\end{equation*}
The choices of $A_0,K$ do not depend on the fixed exponents $a_i$.
\end{proposition}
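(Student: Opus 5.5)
We detect the determinant equation \eqref{eq:det2} by additive characters and split the circle into the major arcs $\mathfrak M_{-2}$ and their complement. For coprime $a,b$ the condition $t=-2(b-a)$ with $|t|\ll Y$ is written as $\psi_{-2}(t/Y)\int_{\mathbb T}e(\theta(t+2b-2a))\,d\theta$. The factor $\psi_{-2}$ equals one there, since $|2(b-a)|\le 8Y$. After Lemma~\ref{lem:shared}, pairs with $(a,b)>1$ cost $O(XYL^{-A})$ on both sides; for $Q_Y^{\mathrm{maj}}$ this uses the same divisor-bound argument applied to the major-arc kernel. Hence $Q_Y-Q_Y^{\mathrm{maj}}$ equals, up to acceptable error, the minor-arc quantity
\begin{equation*}
\int_{\mathbb T\setminus\mathfrak M_{-2}}\Bigl(\prod_iV_i^{-2}\Bigr)\sum_{a,b}\eta(a/Y)\eta(b/Y)e(2\theta(b-a))\sum_{m,n,r,s}\alpha_m\overline{\alpha_r}\beta_n\overline{\beta_s}\,\psi_{-2}\Bigl(\frac{bmn-ars}{Y}\Bigr)e\bigl(\theta(bmn-ars)\bigr)\,d\theta.
\end{equation*}
Next we remove the weight $\psi_{-2}$ by Fourier expansion, $\psi_{-2}(u)=\int\widehat{\psi_{-2}}(\xi)e(\xi u)\,d\xi$. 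This costs only a shift $\theta\mapsto\theta+\xi/Y$ with $|\xi|\le L$, up to rapidly decaying tails. The shift keeps $\theta$ outside a slightly shrunken major arc, and the factor $2L^{A_0}/Y$ in \eqref{eq:major-arcs-kernel} provides the room for this.

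The integrand then factors into the prime-product sum $S_{\mathcal P}(\theta)=\sum_a\eta(a/Y)e(-2\theta a)/\prod_iV_i$, its conjugate-type partner in $b$, and the bilinear sums $\sum_{m,n}\alpha_m\beta_n e(\theta bmn)$ and its conjugate in $r,s$ with $a$. We apply Cauchy--Schwarz in $\theta$ together with a sup bound. For $\theta$ on the minor arcs we aim to show $\sup|S_{\mathcal P}(2\theta)|\ll Y L^{-c A_0}$. This follows from a Vinogradov-type minor-arc estimate for each factor $\sum_{p\in\mathcal P_i}e(\phi p\cdot a')$, where $a'$ is the product of the other primes. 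Because $a$ is a product of $K$ primes at scales $\exp(L^{a_i})$, the sum is a multilinear (Type II) form in $K$ variables. A standard large-sieve / bilinear argument then saves a power $Y^{-\epsilon}$ when $\phi$ has no approximation with denominator $\le L^{A_0}$; we only need a logarithmic saving. The remaining factor $\int_{\mathbb T}|\sum_{a}\sum_{m,n}\cdots|^2 d\theta$ is controlled by Parseval. This reduces it to counting solutions of $bmn=b'm'n'$ with weights of size $L^{C}$, which gives $\ll XYL^{O(C)}$ by divisor bounds. This is the analogue of \eqref{eq:014}.

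The main obstacle is that a plain Parseval bound loses a power of $L$ depending on $C$ and on $\tau$-moments, while the minor-arc saving in $S_{\mathcal P}$ is only $L^{-cA_0}$. That is why $A_0$ is chosen large first. The dependence on $K$ enters through the weight normalisation $\prod_iV_i^{-2}$ and the multilinear bound. We would choose $K$ large so that the product of $K$ one-prime sums, each at most its trivial size, yields a combined saving $\gg L^{-A}$. This works via the Hölder-type estimate $\prod_i(\text{saving}_i)$, where each factor saves at least a fixed constant on minor arcs, uniformly in the $a_i$ since $V_i\to\log 2$. If the single-prime minor-arc bound proves too weak, we would fall back on treating $a$ as a genuine Type II variable $p_1\cdot(p_2\cdots p_K)$ and using the bilinear estimate $\sum_{p,q}e(\phi pq)\ll Y(L^{-A_0/2}+\cdots)$ directly.
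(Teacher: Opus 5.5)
Your argument breaks down at the step where the integrand is said to ``factor''. Since $e(2\theta(b-a))\,e(\theta(bmn-ars))=e(\theta b(mn+2))\,\overline{e(\theta a(rs+2))}$, the minor-arc integrand (after removing $\psi_{-2}$) is $\bigl(\prod_iV_i^{-2}\bigr)|F(\theta)|^2$ with
\[
F(\theta)=\sum_{b}\eta(b/Y)\sum_{m,n}\alpha_m\beta_n\,e\bigl(\theta b(mn+2)\bigr).
\]
Here $b$ sits inside the phase of the bilinear sum, so there is no factor $S_{\mathcal P}(2\theta)$ whose supremum can be pulled out. Nor does $e(\phi p_1\cdots p_K)$ split into one-prime exponentials. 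So the ``$\prod_i(\text{saving}_i)$'' step and the role you give to $K$ have no basis. The fallback bound for $\sum_{p,q}e(\phi pq)$ does not address the real obstruction, which is the coupling of $b$ with $(m,n)$ and of $a$ with $(r,s)$.

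Two further facts show the route cannot be patched. First, Lemma~\ref{lem:shared} controls pairs with $(a,b)>1$ only under the divisibility condition $\frac{mn+2}{a}=\frac{rs+2}{b}\in\mathbb N^*$. In the determinant form \eqref{eq:det2} those pairs are not negligible. The terms $a=b$, $mn=rs$ satisfy the equation identically and have integrand $\equiv1$ on $\mathbb T$. They contribute about $\prod_iV_i^{-2}\sum_a\eta(a/Y)^2\sum_k\bigl|\sum_{mn=k}\alpha_m\beta_n\bigr|^2$, which is in general of size $XY$ up to powers of $L$. So your display either keeps $(a,b)=1$, and then the $|F|^2$ structure is gone, or it is not $Q_Y-Q_Y^{\mathrm{maj}}$ up to an acceptable error.

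Second, even with a genuine product, a supremum-times-Parseval bound cannot give an $L^{-A}$ saving. Parseval only returns $XYL^{O(1)}$, the trivial size of $Q_Y$ itself. In your own bookkeeping, $\sup|S_{\mathcal P}|^2$ times $XYL^{O(C)}$ is $XY^3L^{O(C)-2cA_0}$. And $\int_{\mathbb T\setminus\mathfrak M_{-2}}|F|^2\le\sup_{\mathbb T\setminus\mathfrak M_{-2}}|F|\int_{\mathbb T}|F|$ would need $\sup|F|\ll(XY)^{1/2}L^{-A}$, i.e.\ square-root cancellation for \emph{arbitrary} coefficients $\alpha,\beta$.

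What is missing is a mechanism that beats the trivial bound for arbitrary $\alpha,\beta$ without any pointwise bound on a generating function; the paper's proof is built around one.
\begin{itemize}
\item Via the parity conjugacy $T$, which turns the coefficient $-2$ into $1$, it rewrites \eqref{eq:det2} as the lattice condition $\det(P,Q)/D=b-a$. The positions are $P=(Dbm,s)^T$ and $Q=(Dar,n)^T$, carrying lists of $J+1\asymp L^{0.01}$ primes from each $\mathcal P_i$. The unshared primes of $a$ and $b$ are forced to be distinct, which is how coprimality survives.
\item After averaging over the auxiliary products $D$, $Q_Y-Q_Y^{\mathrm{maj}}$ becomes a pairing $\langle f,\mathcal Ag\rangle$ with endpoint vectors built from $\alpha,\beta$. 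This is bounded through the moments $\sum_P\langle u_P,(\mathcal A\mathcal A^*)^Ru_P\rangle$ with $R\asymp L^{1/2}$, at a cost of only $(d_0Y)^{1/(2R)}=O(1)$.
\item These moments are computed by root averaging, using the $\mathrm{SL}_2(\mathbb Z/S\mathbb Z)$ equidistribution of Appendix~\ref{Append} to replace divisibility by independent lines $\ell_p\in\mathbb P^1(\mathbb F_p)$. They are then factored as $G_0E_0G_1\cdots E_{2R-1}G_{2R}$.
\item The minor-arc saving enters only edge by edge. On a reduced box of the lattice $\Lambda_D$, a clean edge is the bilinear form $Y^{-1}e(\xi\det(\mathbf k,\boldsymbol\ell))$ with $\xi$ off the major arcs, which gives \eqref{eq:cleanbd}.
\item Large $K$ is used only to make the dirty (prime-transfer) edges small via the good-state separation, with $\varepsilon_K\approx L^2(J+1)^{-K}$.
\end{itemize}
Your plan has nothing playing the role of this step, so it has no source of saving.
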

In the remainder of this section, we will prove Proposition \ref{prop:D}.

\subsection{Prime-list operators and parity conjugacy}
\label{sec:001}\label{sec:input}

To estimate $Q_Y-Q_Y^{\mathrm{maj}}$, we encode the determinant condition \eqref{eq:det2} and its major-arc replacement in operators on integer vectors with lists of prime divisors. The change of coordinates proved below replaces the coefficient $-2$ in \eqref{eq:det2} by $1$, preserving the list weights and operator moments.

We work under the hypotheses of Proposition~\ref{prop:D}. Retain the prime sets $\mathcal P_i$, their exponents $a_i$ and normalizing factors $V_i$, and the parameters $L,\,Z,\,K,\,\rho$ from Section~\ref{sec:prelim}. The interval lengths $M,\,N$, the scale $Y$, and the cutoff $\eta$ are those of Subsection~\ref{sec:cauchy}.

For integer column vectors $P=(P_1,\,P_2)^T\in\mathbb Z^2$ and $Q=(Q_1,\,Q_2)^T\in\mathbb Z^2$, write
\begin{equation*}
	\det(P,\,Q)=P_1Q_2-P_2Q_1,\qquad |P|=\max(|P_1|,\,|P_2|).
\end{equation*}
We call $P$ primitive if $(P_1,\,P_2)=1$. For a prime $p$, let $\mathbb F_p=\mathbb Z/p\mathbb Z$, and let $\mathbb P^1(\mathbb F_p)$ be the set of one-dimensional subspaces of $\mathbb F_p^2$.

These subspaces are generated by $(1,\,\xi)^T$ with $\xi\in\mathbb F_p$ and by $(0,\,1)^T$, so there are $p+1$ of them. For a primitive vector $z=(z_1,\,z_2)^T\in\mathbb Z^2$, let $[z]_p$ denote the subspace generated by its nonzero reduction modulo $p$.

Set
\begin{equation}\label{eq:mompar}
	J=\lfloor L^{0.01}\rfloor,\qquad R=\lceil L^{1/2}/2\rceil.
\end{equation}
The lists below have $J+1$ entries, and the later moment estimates have order $2R$. Fix $d_0>0$ so that $[d_0,\,2d_0)$ is a dyadic interval for the product of $J$ additional primes from each $\mathcal P_i$, and define the position region
\begin{equation}\label{eq:physbox}
	\Omega=[d_0YM,\,16d_0YM]\times[N,\,2N].
\end{equation}
For each primitive $P\in\Omega\cap\mathbb Z^2$, we consider ordered tuples of primes
\begin{equation*}
	\boldsymbol p_i=(p_{i,1},\,\ldots,\,p_{i,J+1})\quad(1\le i\le K),\qquad \boldsymbol p=(\boldsymbol p_1,\,\ldots,\,\boldsymbol p_K),
\end{equation*}
and we call each tuple $\boldsymbol p_i$ a prime list. We define $\mathscr S$ to be the finite set of pairs $(P,\,\boldsymbol p)$ with $P\in\Omega\cap\mathbb Z^2$ primitive and, for each $1\le i\le K$, $\boldsymbol p_i$ an ordered $(J+1)$-tuple of distinct primes in $\mathcal P_i$ dividing $P_1$. We call each pair $(P,\,\boldsymbol p)\in\mathscr S$ a state and define
\begin{equation}\label{eq:space}
	\sigma(P,\,\boldsymbol p)=\prod_{i=1}^K V_i^{-(J+1)},\qquad \mathcal H=\ell^2(\mathscr S,\,\sigma).
\end{equation}
Thus $\mathcal H$ consists of complex-valued functions on $\mathscr S$. For $f,\,g\in\mathcal H$, use
\begin{equation}\label{eq:hnorm}
	\langle f,\,g\rangle=\sum_{s\in\mathscr S}\overline{f(s)}g(s)\sigma(s),\qquad \|f\|^2=\langle f,\,f\rangle.
\end{equation}
All adjoints below are taken with respect to this inner product. Define $\mathcal S:\mathcal H\to\mathcal H$ by averaging over all permutations of each list, each combined choice having weight $((J+1)!)^{-K}$. Then $\mathcal S^2=\mathcal S=\mathcal S^*$.

For fixed $(P,\,\boldsymbol p)\in\mathscr S$, we retain $p_{i,1},\,\ldots,\,p_{i,J}$ in each tuple $\boldsymbol p_i$ and replace $p_{i,J+1}$ by a prime $p_i'\in\mathcal P_i\setminus\{p_{i,1},\,\ldots,\,p_{i,J+1}\}$ for $1\le i\le K$. We define
\begin{equation*}
	D=\prod_{i=1}^K\prod_{j=1}^J p_{i,j},\qquad b=\prod_{i=1}^K p_{i,J+1},\qquad a=\prod_{i=1}^K p_i',\qquad \boldsymbol p_i'=(p_{i,1},\,\ldots,\,p_{i,J},\,p_i').
\end{equation*}

For each $1\le i\le K$, consider the two lists
\begin{equation*}
	\boldsymbol p_i=(p_{i,1},\,\ldots,\,p_{i,J},\,p_{i,J+1}),
	\qquad
	\boldsymbol p'_i=(p_{i,1},\,\ldots,\,p_{i,J},\,p'_i).
\end{equation*}
The entries $p_{i,1},\,\ldots,\,p_{i,J}$ occur in both lists
and are called the shared entries.
The entry $p_{i,J+1}$ occurs in $\boldsymbol p_i$ but not
in $\boldsymbol p'_i$, and is called the unshared entry
of $\boldsymbol p_i$.
The entry $p'_i$ occurs in $\boldsymbol p'_i$ but not
in $\boldsymbol p_i$, and is called the unshared entry
of $\boldsymbol p'_i$.

Write $\boldsymbol p'=(\boldsymbol p_1',\,\ldots,\,\boldsymbol p_K')$. Choose $Q$ such that $(Q,\,\boldsymbol p')\in\mathscr S$. In particular, $Db\mid P_1$ and $Da\mid Q_1$, so we may define the integer
\begin{equation*}
	t=\frac{\det(P,\,Q)}D.
\end{equation*}
The first operator compares the condition $t=b-a$ with its major-arc replacement. Use $\psi_1$ and $\mathfrak M_1$ from \eqref{eq:major-arcs-kernel}. For positive integers $n$, retain $\omega(n)=\sum_{p\in\bigcup_{i=1}^K\mathcal P_i}\mathbf1_{p\mid n}$, as in Subsection~\ref{sec:cauchy}.

Define the kernel by
	\begin{equation*}
		\begin{aligned}
		K_1(P,\,Q;D,\,a,\,b)&=\frac{d_0}{D}\eta\Big(\frac{a}{Y}\Big)\eta\Big(\frac{b}{Y}\Big)\psi_1\Big(\frac{t}{Y}\Big)
		\Bigl\{\mathbf{1}_{t=b-a}-\int_{\mathfrak M_1}e\bigl(\theta(t-b+a)\bigr)\,\mathrm{d}\theta\Bigr\}
		\rho^{\frac{\omega(P_1)+\omega(Q_1)-2K(J+1)}{2}}.
		\end{aligned}
	\end{equation*}
For $f\in\mathcal H$, define $\mathcal T_1f$ by
\begin{equation*}
	(\mathcal T_1f)(P,\,\boldsymbol p)=\mathbf1_{d_0\le D<2d_0}\biggl(\prod_{i=1}^K V_i^{-1}\biggr)\sum_{\substack{p_i'\in\mathcal P_i\setminus\{p_{i,1},\ldots,p_{i,J+1}\}\\1\le i\le K}}\ \sum_{\substack{Q:\,(Q,\,\boldsymbol p')\in\mathscr S}}K_1(P,\,Q;D,\,a,\,b)f(Q,\,\boldsymbol p').
\end{equation*}
An allowed transition in this sum is called an edge; a path is a sequence of edges, and its initial position is its root. The joint list weight is $\prod_iV_i^{-(J+2)}$ in both directions, so taking the adjoint reverses the transition and conjugates the kernel.

The later estimates require separation of the phases arising from different prime selections. For a primitive vector $P\in\mathbb Z^2$ with $P_1>0$, choose integers $c,\,d$ satisfying $P_1d-P_2c=1$. We call $(c,\,d)^T$ a complement to $P$, and define $r_P=c/P_1+\mathbb Z\in\mathbb T$. Another complement changes $c/P_1$ by an integer, so $r_P$ is well defined.

All positions in $\Omega$ have positive first coordinate. we call a state $(P,\,\boldsymbol p)\in\mathscr S$ good if its complete lists satisfy both conditions below; empty products are understood to equal $1$.
\begin{enumerate}
	\item For every $D\in[d_0,\,2d_0)$ obtained by omitting one entry from each list,
	\begin{equation}\label{eq:goodone}
		\|\ell Dr_P\|>Y^{-0.7}\qquad(\ell\in\mathbb Z,\ 1\le\ell\le Y^{0.2}).
	\end{equation}
	\item Fix a nonempty $I\subseteq\{1,\,\ldots,\,K\}$ and put $i_*=\max I$. Let $D$ range over all products obtained by omitting one entry from each complete list. For primes $q_i\in\mathcal P_i$ with $i\in I\setminus\{i_*\}$, put $Z=\prod_{i\in I\setminus\{i_*\}}q_i$, and let $D',\,Z'$ denote a second choice of such products.
	
	In this condition and the later dirty-edge equations, $Z$ has this prime-product meaning; the sifting parameter of \eqref{eq:001} is not used in this condition. For each choice of primes $p_i\in\mathcal P_i$ with $i\notin I$, set $T_f=\prod_{i\notin I}p_i$ and assign the weight $\prod_{i\notin I}(p_iV_i)^{-1}$.
	
	Require that, except for choices of these primes of total weight at most $\exp(-L^{a_{i_*}}/4)$, every pair with $DZ\ne D'Z'$ satisfies
	\begin{equation}\label{eq:goodtwo}
		\|(DZ-D'Z')T_fr_P\|\ge100\exp(-L^{a_{i_*}})\qquad(DZ\ne D'Z').
	\end{equation}
\end{enumerate}

To estimate the exceptional ratios, fix the complete lists and replace $r_P$ by a variable $r\in\mathbb T$ in both conditions. Let $\operatorname{meas}$ denote normalized Lebesgue measure on $\mathbb T$. The first condition has at most $(J+1)^K$ choices of $D$, and hence
\begin{equation}\label{eq:bad-root-measure}
	\operatorname{meas}\{r\in\mathbb T:\ \text{the first condition fails}\}\le2(J+1)^KY^{-1/2}.
\end{equation}
For the second condition, fix $I$ and $T_f$. The number of distinct integers $DZ$ satisfies
\begin{equation*}
	\#\{DZ\}\le(J+1)^K\prod_{i\in I\setminus\{i_*\}}|\mathcal P_i|=\exp(o(L^{a_{i_*}})).
\end{equation*}
Each pair $DZ\ne D'Z'$ fails the separation inequality on a set of measure at most $200\exp(-L^{a_{i_*}})$, because multiplication by the nonzero integer $(DZ-D'Z')T_f$ preserves normalized measure on $\mathbb T$.

Summing over pairs and averaging over the choices defining $T_f$ bounds the mean weight of failing choices by $\exp(-L^{a_{i_*}}+o(L^{a_{i_*}}))$. Markov's inequality at threshold $\exp(-L^{a_{i_*}}/4)$ therefore gives
\begin{equation*}
	\operatorname{meas}\{r\in\mathbb T:\ \text{the second condition fails for }I\}\le\exp\bigl(-3L^{a_{i_*}}/4+o(L^{a_{i_*}})\bigr).
\end{equation*}
Since $\log Y\gg L^{0.1}$ for nonempty dyadic blocks, summing over $I$ and including the first condition yields
\begin{equation*}
	\operatorname{meas}\{r\in\mathbb T:\ \text{at least one condition fails}\}\ll_K\exp(-cL^{0.1})
\end{equation*}
for some $c>0$.

Let $\mathcal G_1$ multiply by the good-state indicator. The conditions are invariant under list permutations, so $\mathcal G_1\mathcal S=\mathcal S\mathcal G_1$. Define
\begin{equation}\label{eq:aone}
	\mathcal A_1=\mathcal G_1\mathcal S\mathcal T_1\mathcal S\mathcal G_1.
\end{equation}
For each primitive $P\in\Omega\cap\mathbb Z^2$, define $u_P\in\mathcal H$ by
\begin{equation*}
	u_P(Q,\,\boldsymbol p)=\mathbf1_{Q=P}\qquad((Q,\,\boldsymbol p)\in\mathscr S).
\end{equation*}
Thus $u_P$ includes all admissible lists at $P$ and is not normalized to have unit norm.

We now relate these operators to \eqref{eq:det2}. For the summation variables $a,\,b,\,m,\,n,\,r,\,s$ in \eqref{eq:013}, take $P=(Dbm,\,s)^T$ and $Q=(Dar,\,n)^T$. Then $t=\det(P,\,Q)/D=bmn-ars$, so the required determinant condition is $t=-2(b-a)$.

The primes in the lists are odd, and the coefficients $\alpha_m,\,\alpha_r,\,\beta_n,\,\beta_s$ vanish unless $m,\,r,\,n,\,s$ are odd. Hence both coordinates of each endpoint are odd. Define the set of primitive vectors with this parity by
\begin{equation}\label{eq:017}
	\mathcal C=\{(u,\,v)\in\mathbb Z^2:\ (u,\,v)=1,\ u\equiv v\equiv1\,(\textrm{mod}\,2)\}.
\end{equation}
We restrict the state space to positions in $\Omega\cap\mathcal C$, with the same measure $\sigma$. Use $\psi_{-2}$ and $\mathfrak M_{-2}$ from \eqref{eq:major-arcs-kernel}.

On functions on this restricted state space, define $\mathcal T_{-2}^{\mathcal C}$ by the summation rule for $\mathcal T_1$, replacing the factor  $\psi_1\cdot\{\mathbf{1}_{t=b-a}-\int_{\mathfrak M_1}
e\bigl(\theta(t-b+a)\bigr)\,\mathrm{d}\theta\}$ in the definition of $K_1(P,\,Q;D,\,a,\,b)$ above by
\begin{equation*}
	\psi_{-2}\Big(\frac{t}{Y}\Big)\cdot\biggl\{\mathbf1_{t=-2(b-a)}-\int_{\mathfrak M_{-2}}e\bigl(\theta(t+2b-2a)\bigr)\,\mathrm d\theta\biggr\}.
\end{equation*}
Define $\mathcal G_{-2}$ as multiplication by the indicator of the two good-state conditions with $r_P$ replaced by $-2r_P$. In particular, its first condition is
\begin{equation}\label{eq:good2}
	\|{-2\ell Dr_P}\|>Y^{-0.7}\qquad(1\le\ell\le Y^{0.2}).
\end{equation}
Set $\mathcal A_{-2}^{\mathcal C}=\mathcal G_{-2}\mathcal S\mathcal T_{-2}^{\mathcal C}\mathcal S\mathcal G_{-2}$.

Define the linear map $T:\mathbb R^2\to\mathbb R^2$ by
\begin{equation*}
	T(u,\,v)=\biggl(u,\,\frac{u-v}{2}\biggr),\qquad T^{-1}(u,\,v)=(u,\,u-2v),\qquad \det T=-\frac12.
\end{equation*}
The image of $\Omega$ is
\begin{equation}\label{eq:021}
	\Omega'=T\Omega=\{(u,\,v)\in\mathbb R^2:\ d_0YM\le u\le16d_0YM,\ N\le u-2v\le2N\}.
\end{equation}
Define $\mathcal A_1^{\Omega',\,\mathrm{odd}}$ by replacing $\Omega$ by $\Omega'$ throughout the definitions of the state space, $\mathcal T_1$, and $\mathcal G_1$, and retaining only positions with odd first coordinate.

\begin{proposition}[Parity conjugacy]\label{prop:parity-conjugacy}
	The map $T$ gives a bijection between $\Omega\cap\mathcal C$ and the primitive integer vectors in $\Omega'$ with odd first coordinate. For functions $f,\,g$ on the states with positions in $\Omega\cap\mathcal C$, define $\mathcal U_Tf$ on the transformed states by
	\begin{equation}\label{eq:utdef}
		(\mathcal U_Tf)(TP,\,\boldsymbol p)=f(P,\,\boldsymbol p),\qquad \langle\mathcal U_Tf,\,\mathcal U_Tg\rangle=\langle f,\,g\rangle.
	\end{equation}
	Then $\mathcal U_T$ is unitary for the state measures $\sigma$ and satisfies
	\begin{equation}\label{eq:conj}
		\mathcal U_T\mathcal A_{-2}^{\mathcal C}\mathcal U_T^{-1}=\mathcal A_1^{\Omega',\,\mathrm{odd}}.
	\end{equation}
	For $P\in\Omega\cap\mathcal C$, let $u_{TP}$ be the indicator of states with position $TP$ in the transformed state space. Then
	\begin{equation*}
		\langle u_P,\,(\mathcal A_{-2}^{\mathcal C}(\mathcal A_{-2}^{\mathcal C})^*)^Ru_P\rangle=\langle u_{TP},\,(\mathcal A_1^{\Omega',\,\mathrm{odd}}(\mathcal A_1^{\Omega',\,\mathrm{odd}})^*)^Ru_{TP}\rangle.
	\end{equation*}
\end{proposition}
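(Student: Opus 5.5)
The plan is to verify \eqref{eq:conj} directly, by checking that $T$ transports each ingredient of $\mathcal A_{-2}^{\mathcal C}$ to the corresponding ingredient of $\mathcal A_1^{\Omega',\mathrm{odd}}$. There are four of them: the state space, the measure, the kernel (including the major-arc integral), and the good-state conditions. The moment identity then follows formally from unitarity.

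\textbf{Bijection.} Let $P=(u,v)\in\Omega\cap\mathcal C$. Since $u,v$ are odd, $(u-v)/2\in\mathbb Z$. The gcd is preserved, because $\gcd(u,(u-v)/2)$ divides both $u$ and $v=u-2\cdot\frac{u-v}{2}$, and conversely. So $TP$ is primitive with odd first coordinate. In the other direction, if $(u,w)$ is primitive in $\Omega'$ with $u$ odd, then $T^{-1}(u,w)=(u,u-2w)$ has both coordinates odd and is primitive. Hence $T$ maps $\Omega\cap\mathcal C$ onto the primitive odd-first-coordinate points of $\Omega'=T\Omega$.

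Because $T$ fixes the first coordinate, the divisibility conditions $p_{i,j}\mid P_1$ defining the lists are unchanged. Thus $(P,\boldsymbol p)\mapsto(TP,\boldsymbol p)$ is a bijection of state spaces. The weight $\sigma$ depends only on $\boldsymbol p$, so it is preserved, and $\mathcal U_T$ is unitary. The factor $\rho^{(\omega(P_1)+\omega(Q_1)-2K(J+1))/2}$ and the symmetrizer $\mathcal S$ also commute with $\mathcal U_T$.

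\textbf{Kernel.} The key computation is $\det(TP,TQ)=\det T\cdot\det(P,Q)=-\tfrac12\det(P,Q)$, so $t'=-t/2$, where $t'$ is the transformed quantity $\det(TP,TQ)/D$. Under this substitution:
\begin{itemize}
\item the condition $t=-2(b-a)$ becomes $t'=b-a$;
\item the cutoff becomes $\psi_{-2}(t/Y)=\psi_1(-t/(2Y))=\psi_1(t'/Y)$;
\item for the major-arc term, substitute $\phi=-2\theta$. Then $\theta(t+2b-2a)=-2\theta(t'-b+a)\cdot(-1)$ reduces, after sign checking, to $\phi(t'-b+a)$, and the region $\mathfrak M_{-2}$ is mapped onto $\mathfrak M_1$.
\end{itemize}

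The delicate point is the measure in the major-arc substitution. The map $\theta\mapsto-2\theta$ is $2$-to-$1$ on $\mathbb T$, and $\mathfrak M_{-2}$ is the full preimage of $\mathfrak M_1$. Its pushforward of Lebesgue measure is Lebesgue measure on $\mathbb T$. The integrand depends on $\theta$ only through $-2\theta$ whenever $t+2b-2a$ is even, and here $t+2b-2a=-2(t'-b+a)$ is indeed even since $t'\in\mathbb Z$. So
$$\int_{\mathfrak M_{-2}}e(\theta(t+2b-2a))\,d\theta=\int_{\mathfrak M_1}e(\phi(t'-b+a))\,d\phi.$$

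\textbf{Good-state conditions.} I have to check $r_{TP}=-2r_P$, which should be the main obstacle, since the conditions defining $\mathcal G_{-2}$ were built with $-2r_P$ for exactly this purpose. Take a complement $(c,d)$ to $P$, so $P_1d-P_2c=1$. Then $T(c,d)=(c,(c-d)/2)$ is not integral in general, so I will construct a complement to $TP$ directly. I want $(c',d')$ with $P_1d'-\frac{P_1-P_2}{2}c'=1$.

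Try $c'\equiv -2c\pmod{P_1}$. This should work because modulo $P_1$ the equation reads $\frac{P_2}{2}c'\equiv1$, while the original gives $P_2c\equiv-1$. Then $c'\equiv -2c\cdot(\ldots)$, and the sign and the factor $2$ must be tracked carefully, using that $2$ is invertible modulo the odd $P_1$. If a check shows instead $r_{TP}=2r_P$, this is harmless, because conditions \eqref{eq:goodone}--\eqref{eq:goodtwo} are invariant under $r\mapsto -r$ (the norm $\|\cdot\|$ is even). Hence $\mathcal G_{-2}=\mathcal U_T^{-1}\mathcal G_1\mathcal U_T$.

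\textbf{Conclusion.} Combining the pieces gives $\mathcal U_T\mathcal T_{-2}^{\mathcal C}\mathcal U_T^{-1}=\mathcal T_1^{\Omega',\mathrm{odd}}$ and hence \eqref{eq:conj}. Since $\mathcal U_T u_P=u_{TP}$ and $\mathcal U_T$ is unitary, taking $R$-th powers of $\mathcal A\mathcal A^*$ yields the final moment identity.
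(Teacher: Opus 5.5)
Your proposal is correct and follows essentially the same route as the paper. Both arguments use the gcd identity for the bijection, the fact that lists and $\sigma$ are unchanged, the relation $t'=-t/2$ together with the substitution $\phi=-2\theta$ over the full preimage $\mathfrak M_{-2}$, and $r_{TP}\equiv-2r_P$ for the good-state conditions.

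The one step you left unfinished closes at once: your congruences give $c'\equiv-2c\pmod{P_1}$, and the paper simply exhibits the integral complement $(-2c,\,d-c)^T$ to $TP$. Also, your first displayed identity for $\theta(t+2b-2a)$ has a stray factor $-1$; the correct relation is $t+2b-2a=-2(t'-b+a)$, which is the one you use later.
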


\begin{proof}
	For integers $u,\,v$ with $u$ odd,
	\begin{equation}\label{eq:gcdpar}
		(u,\,u-2v)=(u,\,v).
	\end{equation}
	Thus $T$ and $T^{-1}$ preserve primitivity on the stated sets and give the required bijection. Since $T$ leaves the first coordinate and every list unchanged, it also preserves admissibility and the state measure $\sigma$, proving \eqref{eq:utdef}.
	
	We next check the determinant and major-arc factors. For every allowed transition between $P,\,Q\in\mathcal C$, the odd integer $D$ divides both first coordinates. Hence $t=\det(P,\,Q)/D$ is an even integer, and
	\begin{equation}\label{eq:detconj}
		t':=\frac{\det(TP,\,TQ)}D=-\frac12\frac{\det(P,\,Q)}D=-\frac t2.
	\end{equation}
	Hence $t=-2(b-a)$ is equivalent to $t'=b-a$, and $\psi_{-2}(t/Y)=\psi_1(t'/Y)$. The change of variables $\phi=-2\theta$ on $\mathbb T$ gives
	\begin{equation}\label{eq:019}
		\int_{\mathfrak M_{-2}}e\bigl(\theta(t+2b-2a)\bigr)\,\mathrm d\theta=\int_{\mathfrak M_1}e\bigl(\phi(t'-b+a)\bigr)\,\mathrm d\phi.
	\end{equation}
	There is no additional factor: the two inverse branches each have Jacobian $1/2$. All other kernel factors depend only on the unchanged first coordinates and prime lists, so the operators before the good-state restrictions agree under $\mathcal U_T$.
	
	Finally, for $P=(u,\,v)^T\in\Omega\cap\mathcal C$, choose $c,\,d\in\mathbb Z$ with $ud-vc=1$. The vector $(-2c,\,d-c)^T$ is an integral complement to $TP$, so
	\begin{equation}\label{eq:022}
		r_{TP}\equiv-2r_P\,(\textrm{mod}\,1).
	\end{equation}
	The good-state conditions at $TP$ are exactly those defining $\mathcal G_{-2}$ at $P$. Their exceptional-ratio bound is unchanged because multiplication by $-2$ preserves normalized measure on $\mathbb T$.
	
	List symmetrization is also unchanged, proving \eqref{eq:conj}. The moment identity follows from unitarity and $\mathcal U_Tu_P=u_{TP}$.
\end{proof}

It remains to estimate the moments on the right-hand side of the proposition. In the next subsection, we average over the roots in the original rectangle $\Omega$ and transfer the resulting formula to $\Omega'$ with normalization $d_0YMN$.

\vskip 3mm

\subsection{Root averaging}\label{sec:roots}

The purpose of this subsection is to establish an averaging formula for
\begin{equation}\label{root-aver}
	\frac1{d_0YMN}\sum_{P\in\Omega\cap\mathcal C}\bigl\langle u_P,\,\bigl(\mathcal A_{-2}^{\mathcal C}(\mathcal A_{-2}^{\mathcal C})^*\bigr)^Ru_P\bigr\rangle.
\end{equation}
We express this normalized sum as a triple integral of finite averages over $\mathbb P^1(\mathbb F_p)$ for $p\in\bigcup_i\mathcal P_i$, with an error $O_A(L^{-2AR})$ for every fixed $A>0$. Hence the estimate of \eqref{root-aver} is reduced to bounding these finite averages uniformly in the three real integration variables. We obtain those bounds in the following subsections.

Note that $\Omega'$ defined by  \eqref{eq:021} is a parallelogram. Its smallest containing rectangle with sides parallel to the coordinate axes is
\begin{equation*}
	\big[d_0YM,\,16d_0YM\big]\times\biggl[\frac{d_0YM-2N}{2},\,\frac{16d_0YM-N}{2}\biggr].
\end{equation*}
This rectangle has area $(1+15d_0YM/N)$ times the area of $\Omega'$. We therefore retain $\Omega'$ exactly and average using the original coordinates in $\Omega$.

For $P=(u,\,v)^T\in\Omega\cap\mathcal C$, choose integers $c,\,d$ with $ud-vc=1$, normalized as follows:
\begin{equation*}
	\begin{aligned}
		g=\begin{pmatrix}u&c\\v&d\end{pmatrix}\in\mathrm{SL}_2(\mathbb Z),\qquad 0\le c<u,\,\quad r=c/u,\qquad E=\begin{pmatrix}1&0\\0&-2\end{pmatrix}.
	\end{aligned}
\end{equation*}
For the map $T$ defined in the preceding subsection, we have
\begin{equation}\label{eq:gprime}
	\begin{aligned}
		g'=TgE=\begin{pmatrix}u&-2c\\(u-v)/2&d-c\end{pmatrix}\in\mathrm{SL}_2(\mathbb Z).
	\end{aligned}
\end{equation}
Every transformed primitive position can be written uniquely as $g'z$, where $z=(z_1,\,z_2)^T\in\mathbb Z^2$ is primitive. Its original position is $T^{-1}g'z=gEz$. Since $g$ is unimodular,
\begin{equation*}
	\gcd\bigl((gEz)_1,\,(gEz)_2\bigr)=(z_1,\,2z_2)=1\quad\Longleftrightarrow\quad z_1\text{ is odd}.
\end{equation*}
The first row of $g'$ is $(1,\,0)$ modulo $2$, so the odd-first-coordinate restriction is also exactly $z_1$ odd.

For an odd prime $p$, let $\ell_p(g')\in\mathbb P^1(\mathbb F_p)$ be the kernel of the first row of $g'$ reduced modulo $p$. For primitive $z$, this gives
\begin{equation*}
	p\mid(g'z)_1\quad\Longleftrightarrow\quad[z]_p=\ell_p(g').
\end{equation*}
We will replace these kernel lines by independently averaged lines $\ell_p\in\mathbb P^1(\mathbb F_p)$, each with weight $1/(p+1)$.

For real $u,\,v,\,r$ with $u/(d_0YM)\in[1,\,16]$, $v/N\in[1,\,2]$, and $r\in[0,\,1)$, use the continuous extension
\begin{equation*}
	g'(u,\,v,\,r)=\begin{pmatrix}u&-2ur\\(u-v)/2&r(v-u)+1/u\end{pmatrix},\qquad \tau(z)=z_1-2rz_2.
\end{equation*}
For each primitive $z$, choose an integer vector $w=(w_1,\,w_2)^T$ with $\det(z,\,w)=1$ and $|w|\ll1+|z|$. At positions satisfying $g'z\in\Omega'$, define $r_{g'z}\in\mathbb R/\mathbb Z$ by $r_{g'z}\equiv\tau(w)/\tau(z)\,(\textrm{mod}\,1)$; this is the ratio used in the good-state tests.

Define $\mathcal M_R(u,\,v,\,r)$ from the finite path expansion of
\begin{equation*}
	\bigl\langle u_{TP},\,\bigl(\mathcal A_1^{\Omega',\,\mathrm{odd}}(\mathcal A_1^{\Omega',\,\mathrm{odd}})^*\bigr)^Ru_{TP}\bigr\rangle
\end{equation*}
as follows. Sum over the prime lists and primitive vectors $z_j\in\mathbb Z^2$ with odd first coordinate, where $0\le j\le2R$ and $z_0=z_{2R}=(1,\,0)^T$. The successive positions are $g'z_j$. Write all list divisibility restrictions as indicators, so that the summation indices do not depend on the arithmetic root. Retain all weights, permutations, conjugations, position restrictions, and good-state conditions; the initial and final lists need not agree.

In this arithmetic expansion, replace every divisibility indicator, including those in the damping factors, by
\begin{equation*}
	\mathbf1_{p\mid(g'z_j)_1}\longmapsto\mathbf1_{\ell_p=[z_j]_p}\qquad\bigl(p\in\textstyle\bigcup_i\mathcal P_i\bigr).
\end{equation*}
For fixed $(u,\,v,\,r)$ and $\boldsymbol\ell=(\ell_p)_{p\in\bigcup_i\mathcal P_i}$, we define $\mathcal A_{1,\boldsymbol\ell}$ by the summation formulas for $\mathcal A_1^{\Omega',\,\mathrm{odd}}$, with positions $g'z$ and every divisibility indicator, including those in the damping factors, replaced as above. We take pairs $(z,\,\boldsymbol p)$ with $z\in\mathbb Z^2$ primitive, $z_1$ odd, $g'z\in\Omega'$, and each $\boldsymbol p_i$ an ordered $(J+1)$-tuple of distinct primes in $\mathcal P_i$ satisfying $\ell_p=[z]_p$ for every entry $p$. We take the inner product and adjoint with respect to the measure $\prod_iV_i^{-(J+1)}$ on these pairs. We then define
\begin{equation*}
\mathcal M_R(u,\,v,\,r)=\biggl(\prod_{p\in\bigcup_i\mathcal P_i}\frac1{p+1}\biggr)\sum_{\substack{\ell_p\in\mathbb P^1(\mathbb F_p)\\p\in\bigcup_i\mathcal P_i}}\bigl\langle u_{(1,\,0)^T},\,(\mathcal A_{1,\boldsymbol\ell}\mathcal A_{1,\boldsymbol\ell}^*)^Ru_{(1,\,0)^T}\bigr\rangle,
\end{equation*}
where $u_{(1,\,0)^T}(z,\,\boldsymbol p)=\mathbf1_{z=(1,\,0)^T}$. We use the same $\ell_p$ throughout each summand.

\begin{proposition}[Root averaging]
	Write $s=u/(d_0YM)$ and $t_0=v/N$. For every fixed $A>0$,
	\begin{equation*}
		\begin{aligned}
			&\frac1{d_0YMN}\sum_{P\in\Omega\cap\mathcal C}\bigl\langle u_P,\,\bigl(\mathcal A_{-2}^{\mathcal C}(\mathcal A_{-2}^{\mathcal C})^*\bigr)^Ru_P\bigr\rangle\\
			&\qquad=\frac1{3\zeta(2)}\int_1^{16}\int_1^2\int_0^1\mathcal M_R(d_0YM s,\,Nt_0,\,r)\,\mathrm dr\,\mathrm dt_0\,\mathrm ds+O_A(L^{-2AR}).
		\end{aligned}
	\end{equation*}
	The same replacement applies separately to the absolute sums for the two terms in braces in $K_1(P,\,Q;D,\,a,\,b)$, after the endpoint coefficients are bounded by their suprema. Either endpoint may be required to fail goodness; the normalized error in these one-edge cases is $O_A(L^{-A})$.
\end{proposition}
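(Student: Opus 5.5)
The plan is to first apply Proposition~\ref{prop:parity-conjugacy} termwise, so that the left side becomes an average over $P\in\Omega\cap\mathcal C$ of the transformed moments $\langle u_{TP},(\mathcal A_1^{\Omega',\mathrm{odd}}(\mathcal A_1^{\Omega',\mathrm{odd}})^*)^Ru_{TP}\rangle$. For each root $P=(u,v)^T$ we use the normalized matrix $g$ and $g'=TgE$ from \eqref{eq:gprime}. We then write the transformed positions along a path as $g'z_j$ with $z_0=z_{2R}=(1,0)^T$ and $z_j$ primitive with $z_1$ odd. This turns each moment into a finite sum over paths $(z_j)$ and prime lists. The summand is a product of weights, damping factors, cutoffs and good-state indicators; all of these depend on the root only through $g'$ and through the divisibility indicators $\mathbf 1_{p\mid(g'z_j)_1}=\mathbf 1_{\ell_p(g')=[z_j]_p}$. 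The parameter $r_{g'z}$ depends on $g'$ only through $r=c/u$ and $\tau$.

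The core step is an equidistribution statement for the triple $(u,v,c/u)$ jointly with the lines $(\ell_p(g'))_p$. For a fixed path we need the set of primes actually involved, namely $S=\{p:\ p \text{ divides some list entry}\}$. This has $|S|\le 2R\cdot K(J+2)$, so the modulus $q_S=\prod_{p\in S}p$ satisfies $\log q_S\ll RJL^{0.2}\ll L^{0.71}$, which is $x^{o(1)}$. The line $\ell_p(g')$ is the kernel of $(u,-2c)$ mod $p$, so it is determined by $(u:c)$ mod $p$. As $P$ ranges over primitive vectors in $\Omega$ with $c\in[0,u)$, the pair $(u,c)$ runs over coprime pairs with $c<u$. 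Within each box in $(u,v)$ and interval in $r$, we count coprime $(u,c)$ with $(u:c)$ mod $p$ prescribed for $p\in S$, together with $v$ determined by $ud-vc=1$ (so $v\equiv -c^{-1}\pmod u$ lies in a prescribed range). This is a lattice-point count in $\mathrm{SL}_2(\mathbb Z)$ restricted by congruences mod $q_S$ and by parity. By a standard Kloosterman or Weil-bound argument, or by counting via $v\bmod u$ directly, the count equals the main term $\prod_{p\in S}(p+1)^{-1}\cdot\frac{1}{3\zeta(2)}\cdot\text{volume}$ with relative error $q_S^{O(1)}(YMN)^{-c}$. Here $3\zeta(2)$ is the local density of primitive vectors with both coordinates odd, $\frac{6}{\pi^2}\cdot\frac12=\frac1{3\zeta(2)}\cdot\frac{3\zeta(2)\cdot6}{2\pi^2}$; we would check this constant carefully. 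Primes outside $S$ contribute nothing, so replacing the lines for all $p\in\bigcup\mathcal P_i$ by independent uniform lines is harmless.

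The smooth dependence on $(u,v,r)$ of the cutoffs $\psi_1$, $\eta$ and of the region $\Omega'$ is handled by partitioning into cells of side $(YMN)^{-c'}$ relative scale. The one nonsmooth piece is the good-state indicators: $\|\ell Dr_{g'z}\|>Y^{-0.7}$ and \eqref{eq:goodtwo} are discontinuous in $r$. Their boundary has measure controlled by \eqref{eq:bad-root-measure} and by the analogous bound for the second condition, and we absorb them by sandwiching between cells. This is where I expect the main obstacle. The number of paths is huge, of size roughly $(\text{entries})^{2R}$ with $R\asymp L^{1/2}$, so every error must be relative, i.e. multiplicative in the total absolute path weight. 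The absolute path weight is itself bounded by $L^{O(R)}$ through the damping factors and the normalization $d_0/D$. We therefore need the equidistribution error per path to be $\le L^{-CR}$ times its main term, and we obtain this from the power saving $q_S^{O(1)}(YM)^{-c}=\exp(O(L^{0.71})-cL^{0.1}\log\ldots)$. This requires comparing $\log q_S\ll L^{0.71}$ against $\log(YM)\gg L$, which is fine since $M\ge x^{\delta}$.

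Finally, the remarks after the statement follow by applying the same argument to the absolute sums of each brace term separately. In the one-edge cases where an endpoint is required to be bad, the measure bound $\exp(-cL^{0.1})$ gives the $O_A(L^{-A})$ error.
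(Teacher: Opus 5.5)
There is a genuine gap at the sentence ``Primes outside $S$ contribute nothing.'' The kernel of $\mathcal T_{-2}^{\mathcal C}$ (and of $\mathcal A_1^{\Omega',\mathrm{odd}}$) carries the damping factor $\rho^{(\omega(P_1)+\omega(Q_1)-2K(J+1))/2}$. Here $\omega$ counts \emph{every} prime of $\bigcup_i\mathcal P_i$ dividing the first coordinate, not only the list entries. After the substitution $\mathbf 1_{p\mid(g'z_j)_1}\mapsto\mathbf 1_{\ell_p=[z_j]_p}$, each path summand therefore contains a factor $\prod_j\bigl(1-\eta'_j\mathbf 1_{\ell_p(g')=[z_j]_p}\bigr)$ for every non-list prime $p\in\bigcup_i\mathcal P_i$.

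So the summand depends on $\ell_p(g')$ for all $p\le\exp(2L^{a_K})$. The product of these primes is doubly exponential in $L^{a_K}$, far beyond any modulus for which a root count with error $q^{O(1)}x^{-c}$ means anything. Your modulus $q_S$, with $\log q_S\ll L^{0.71}$, covers only the list primes. Your equidistribution step therefore does not justify replacing the joint distribution of all the $\ell_p(g')$ by independent uniform lines.

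The missing idea is a sieve truncation of these damping factors; this is the heart of the paper's proof.
\begin{itemize}
\item Each such factor lies in $[0,1]$ and differs from $1$ only if $\ell_p=[z_j]_p$ for some $j$. Its mean deficit under a uniform line is therefore at most $(2R+1)/(p+1)$.
\item One sandwiches the product of these factors between the Bonferroni polynomials of degrees $B-1$ and $B$, with $B=\lceil L^{0.76}\rceil$. The mean gap is at most $(C_K(2R+1))^B/B!\le\exp(-cL^{0.76}\log L)$.
\item This gap beats the total absolute coefficient mass $\exp(O_K(L^{0.73}))$ of all discrete choices (paths with $|z_j|\ll Rd_0Y$, lists, permutations).
\item Each retained Bonferroni term involves at most $B$ further primes, so its modulus satisfies $\log S\ll_K L^{0.96}$. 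This is still inside the range where the appendix count, with error $S^5\min\{d_0YM,N\}^{-1/100}$, gives a power saving.
\end{itemize}
Only after this truncation is the finite-residue count applied, term by term, to both Bonferroni bounds.

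Two smaller points.

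First, the good-state indicators cannot be handled through the failure-set measure \eqref{eq:bad-root-measure}. A set of measure $e^{-cL^{0.1}}$ is far larger than the target error $L^{-2AR}$ with $R\asymp L^{1/2}$, and that measure is irrelevant to sandwiching anyway. What is needed is that $r\mapsto r_{g'z}\equiv\tau(w)/\tau(z)$ is monotone with derivative $-2/\tau(z)^2$. Then each test is piecewise constant in $r$ with only $\exp(o(L))$ breakpoints, and cells of side $x^{-c}$ meeting a breakpoint have negligible total volume.

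Second, the one-edge clause asserts that the \emph{replacement} error is $O_A(L^{-A})$ even when goodness is replaced by its complement. The smallness of the bad-endpoint contribution itself is derived only afterwards, in \eqref{eq:baderr}, by combining the measure bound with the independent-line model.
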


\begin{proof}
	We first establish the replacement when only finitely many prime residues are prescribed. Let $S$ be odd and squarefree with $S\le\exp(O_K(L^{0.98}))$, and prescribe a line $\ell_p\in\mathbb P^1(\mathbb F_p)$ for each $p\mid S$. Let $I_1,\,I_2,\,I_3$ be intervals in $[1,\,16]$, $[1,\,2]$, and $[0,\,1]$, respectively.
	
	We shall apply Lemma~\ref{lem:app-root-count} established in Appendix \ref{Append} with modulus $2S$. Exactly two of the six matrices in $\mathrm{SL}_2(\mathbb F_2)$ have first column $(1,\,1)^T$. At every odd prime, $g\mapsto TgE$ is a bijection of $\mathrm{SL}_2(\mathbb F_p)$, and its first-row kernel is uniform on $\mathbb P^1(\mathbb F_p)$. The Chinese remainder theorem therefore gives, for some $c_\delta>0$,
	\begin{equation*}
		\begin{aligned}
			&\frac1{d_0YMN}\#\biggl\{P=(u,\,v)^T\in\Omega\cap\mathcal C:\ \frac{u}{d_0YM}\in I_1,\ \frac vN\in I_2,\ \frac cu\in I_3,\ \ell_p(g')=\ell_p\ (p\mid S)\biggr\}\\
			&\qquad=\frac{|I_1||I_2||I_3|}{3\zeta(2)\prod_{p\mid S}(p+1)}+O\bigl(x^{-c_\delta+o(1)}\bigr).
		\end{aligned}
	\end{equation*}
	The error is uniform in the intervals and the prescribed lines. Thus the measure for the original root coordinates is
	\begin{equation}\label{eq:density}
		\begin{aligned}
			\frac1{3\zeta(2)}\,\mathrm{d}(u/(d_0YM))\,\mathrm{d}(v/N)\,\mathrm{d}r,
		\end{aligned}
	\end{equation}
	and each prescribed odd-prime line contributes the factor $1/(p+1)$.
	
	We next bound the relative positions and the number of terms to which this count is applied. For consecutive original positions $P_j=(u_j,\,v_j)^T$ on a path, the kernel support gives
	\begin{equation*}
		|\det(P_j,\,P_{j+1})|\ll d_0Y,\qquad \biggl|\frac{v_{j+1}}{u_{j+1}}-\frac{v_j}{u_j}\biggr|\ll\frac1{d_0YM^2}.
	\end{equation*}
	Since $P_j=gEz_j$ and $P_0=P$, summing over at most $2R$ edges gives
	\begin{equation*}
		2|(z_j)_2|=|\det(P,\,P_j)|\ll Rd_0Y,\qquad |(z_j)_1|=|\tau(z_j)+2r(z_j)_2|\ll Rd_0Y.
	\end{equation*}
	Here $1/16\le\tau(z_j)\le16$ follows from the first-coordinate restriction. We sum over the vectors $(z_0,\,\ldots,\,z_{2R})$, the $K$ ordered prime tuples $\boldsymbol p_i$ at each $g'z_j$, and all permutations of their $J+1$ coordinates. We bound the sum of the absolute coefficients attached to these choices by $\exp(O_K(L^{0.73}))$, since
	\begin{equation*}
		R\log(Rd_0Y)+KR(J+1)L^{0.2}+KR(J+1)\log(J+1)\ll_K L^{0.73}.
	\end{equation*}
	
	Keep the exact factors of every prime occurring in a prime list for some $0\le j\le2R$. For each remaining prime $p$, its damping factor lies in $[0,\,1]$ and differs from $1$ only if $\ell_p=[z_j]_p$ for some $j$. Its mean deficit is therefore at most $(2R+1)/(p+1)$.
	
	Put $B=\lceil L^{0.76}\rceil$ and bound the product of these damping factors between the consecutive Bonferroni polynomials of degrees $B-1$ and $B$. Their mean difference is bounded by
	\begin{equation*}
		(2R+1)^B\sum_{\substack{p_1<\cdots<p_B\\p_\nu\in\bigcup_i\mathcal P_i}}\frac1{(p_1+1)\cdots(p_B+1)}\le\frac{(C_K(2R+1))^B}{B!}\le\exp(-cL^{0.76}\log L)
	\end{equation*}
	for some $c>0$, using $B!\ge(B/e)^B$ and $2R\asymp L^{1/2}$. This bound absorbs the preceding absolute coefficient mass. Every retained term involves a squarefree modulus satisfying
	\begin{equation*}
		\log S\ll_K R(J+1)L^{0.2}+BL^{0.2}\ll_K L^{0.96}.
	\end{equation*}
	For each retained prime, evaluate its local factor directly as a function of its single line $\ell_p$. The total number of retained terms and residue choices is $\exp(o(L))$. Apply the finite-residue count to both Bonferroni bounds for each path and list choice, using the absolute path coefficients to bound the error. It remains to justify imposing the position and good-state conditions in this count.
	
	For $z=(z_1,\,z_2)^T$, direct multiplication gives
	\begin{equation*}
		\begin{aligned}
			(g'z)_1&=u\tau(z),\quad
			(g'z)_1-2(g'z)_2&=v\tau(z)-\frac{2z_2}{u}.
		\end{aligned}
	\end{equation*}
	Thus, in the normalized coordinates $s=u/(d_0YM)$ and $t_0=v/N$, the boundary equations of $g'z\in\Omega'$ are
	\begin{equation}\label{eq:faces}
		\begin{aligned}
			s\tau(z)\in\{1,\,16\},\qquad t_0\tau(z)-\frac{2z_2}{d_0YMN s}\in\{1,\,2\}.
		\end{aligned}
	\end{equation}
	The derivatives of the two expressions with respect to $s$ and $t_0$, respectively, equal $\tau(z)\in[1/16,\,16]$. Their other first derivatives are $\exp(o(L))$, since $|z|\ll Rd_0Y$ and $\log(d_0Y)=O_K(L^{0.21})$. In particular,
	\begin{equation*}
		\frac{|z_2|}{d_0YMN}\ll\frac R{MN},
	\end{equation*}
	so the normalized boundary estimates introduce no factor $d_0YM/N$.
	
	For the complement $w$ of $z$ chosen above, the good-state ratio satisfies
	\begin{equation}\label{eq:rderiv}
		\begin{aligned}
			r_{g'z}\equiv\frac{\tau(w)}{\tau(z)}\,(\textrm{mod}\,1),\qquad \frac{\partial}{\partial r}\frac{\tau(w)}{\tau(z)}=-\frac{2}{\tau(z)^2}.
		\end{aligned}
	\end{equation}
	Its derivative has fixed sign and modulus bounded away from zero. The prime choices and integer translates occurring in the two good-state tests give only $\exp(o(L))$ boundary values of $r$. The weighted exceptional-choice condition is constant between these values, so the same assertion holds when goodness is replaced by its failure.
	
	Partition $[1,\,16]\times[1,\,2]\times[0,\,1]$ into cubes of side $x^{-c_\delta/8}$. There are $O(x^{3c_\delta/8})$ cubes. The union of cubes meeting a position boundary or a good-state boundary has volume at most $x^{-c_\delta/8+o(1)}$; on every remaining cube all these indicators are constant.
	
	Let $\mathcal E_{\mathrm{count}}$ be the total absolute counting error over the retained terms and cubes, and let $\mathcal E_{\partial}$ be the corresponding absolute main mass of the boundary cubes. Including all discrete choices in the factors $x^{o(1)}$, we obtain
	\begin{equation}\label{eq:root-grid-errors}
		\begin{aligned}
			\mathcal E_{\mathrm{count}}\ll d_0YMN x^{-5c_\delta/8+o(1)},\qquad \mathcal E_{\partial}\ll d_0YMN x^{-c_\delta/8+o(1)}.
		\end{aligned}
	\end{equation}
	The discrepancy on the boundary cubes is already included in $\mathcal E_{\mathrm{count}}$. Since $R\asymp L^{1/2}$, the total normalized error satisfies
	\begin{equation*}
		\frac{\mathcal E_{\mathrm{count}}+\mathcal E_{\partial}}{d_0YMN}+\exp\bigl(C_KL^{0.73}-cL^{0.76}\log L\bigr)\ll_A L^{-2AR}.
	\end{equation*}
	Together with the conjugacy identity \eqref{eq:conj}, this proves the asserted moment replacement.
	
	For one edge, use $|z|\ll d_0Y$ and omit the condition that the final position equals the initial one. After bounding the endpoint coefficients by their suprema, the absolute values of the two kernel terms have the same finite-residue and boundary properties. Failure of goodness at either endpoint replaces an indicator by its complement. The same proof therefore gives normalized error $O_A(L^{-A})$ in all the stated one-edge cases.
\end{proof}

We next estimate $\mathcal M_R(u,\,v,\,r)$ uniformly in the real root variables. The proposition then converts that estimate into the required arithmetic moment bound.

\vskip 3mm

\subsection{\texorpdfstring{Auxiliary operators and edge estimates}{Auxiliary operators and edge estimates}}\label{sec:memops}

We fix the real root $(u,\,v,\,r)$ from Subsection~\ref{sec:roots}. In this subsection, we define auxiliary operators $G_j$ and $E_j$ to estimate $\mathcal M_R(u,\,v,\,r)$. We restrict the auxiliary tuples defined below to at most $\lceil L^2\rceil$ pairs in total and prove
\begin{equation*}
	\|G_j\|\ll_{K,\,\rho}1,\qquad \|E_j\|\le L^{-G},
\end{equation*}
for any fixed $G>0$ and $0\le j<2R$, after choosing $A_0$ and then $K$ sufficiently large. The next subsection uses these estimates to prove the arithmetic moment bound.

Choose a fixed constant $C_{\mathrm{pos}}>0$ large enough for the path bound in Subsection~\ref{sec:roots}, and retain the relative positions
\begin{equation}\label{eq:relset}
	\mathcal Z_0=\biggl\{z\in\mathbb Z^2:\ z\text{ primitive},\ |z|\le2C_{\mathrm{pos}}Rd_0Y,\ \frac{d_0YM}{u}\le\tau(z)\le\frac{16d_0YM}{u}\biggr\}.
\end{equation}
Here $\tau(z)=z_1-2rz_2$, as before. We write $\boldsymbol p=(\boldsymbol p_1,\,\ldots,\,\boldsymbol p_K)$, where $\boldsymbol p_i=(p_{i,1},\,\ldots,\,p_{i,J+1})$ is the ordered prime tuple attached to $z$, and we call the tuples $\boldsymbol p_1,\,\ldots,\,\boldsymbol p_K$ the active lists at $z$. We also write $\mathfrak m=(\mathfrak m_1,\,\ldots,\,\mathfrak m_K)$, where $\mathfrak m_i$ is an ordered $k_i$-tuple of pairs $(p,\,\ell)$ with $p\in\mathcal P_i$ and $\ell\in\mathbb P^1(\mathbb F_p)$; repeated pairs are allowed. We use ``append $(p,\,\ell)$ to $\mathfrak m_i$'' to mean the replacement
\begin{equation*}
\bigl((p_1,\,\ell_1),\,\ldots,\,(p_{k_i},\,\ell_{k_i})\bigr)\longmapsto\bigl((p_1,\,\ell_1),\,\ldots,\,(p_{k_i},\,\ell_{k_i}),\,(p,\,\ell)\bigr).
\end{equation*}
We append an ordered tuple of pairs by placing its coordinates at the end in their given order. For functions $F(z,\,\boldsymbol p,\,\mathfrak m)$, we impose the remaining position restrictions through the projection
\begin{equation}\label{eq:posproj}
	\chi(z)=\mathbf1_{g'z\in\Omega'}\mathbf1_{z_1\text{ odd}},\qquad (P_\chi F)(z,\,\boldsymbol p,\,\mathfrak m)=\chi(z)F(z,\,\boldsymbol p,\,\mathfrak m).
\end{equation}
Since $P_\chi$ depends only on $z$, it commutes with list permutations and operations that leave $z$ fixed. Moreover, $\|P_\chi\|\le1$ and $1/16\le\tau(z')/\tau(z)\le16$ for $z,\,z'\in\mathcal Z_0$.

For a path $z_0,\,\ldots,\,z_{2R}$, put $q_0=q_{2R}=\sqrt\rho$, $q_j=\rho$ for $0<j<2R$, and $\eta'_j=1-q_j$. For $p\in\mathcal P_i$, define
\begin{equation}\label{eq:nudef}
	b'_p=1-\frac{\sum_{j=0}^{2R}\eta'_j}{p+1},\qquad \nu_i(p)=\frac1{V_i(p+1)b'_p}.
\end{equation}
Let $p_{\min}=\min\bigcup_i\mathcal P_i$. The reciprocal prime sums give
\begin{equation}\label{eq:numass}
	pV_i\nu_i(p)=1+O((2R+1)/p),\quad s_i:=\sum_{p\in\mathcal P_i}\nu_i(p)=1+O_K(R/p_{\min}),\quad \Delta:=\max_{i,\,p}\nu_i(p)\ll_K e^{-L^{0.1}}.
\end{equation}
In particular, $b'_p>0$ for large $x$, and $0<\prod_p b'_p\le1$; products over $p$ in this subsection are over $\bigcup_i\mathcal P_i$.

Let $A_p\subseteq\{0,\,\ldots,\,2R\}$ consist of the indices $j$ for which an active list at $z_j$ contains $p$. Averaging over $\ell\in\mathbb P^1(\mathbb F_p)$ gives the factor
\begin{equation}\label{eq:pfactor}
	\frac1{p+1}\sum_{\ell\in\mathbb P^1(\mathbb F_p)}\prod_{j\in A_p}\mathbf1_{\ell=[z_j]_p}\prod_{j\notin A_p}\bigl(1-\eta'_j\mathbf1_{\ell=[z_j]_p}\bigr).
\end{equation}
When $A_p=\varnothing$, expanding the product and grouping terms by the least and greatest selected indices gives
\begin{equation*}
	\begin{aligned}
		&\frac1{p+1}\sum_{\ell\in\mathbb P^1(\mathbb F_p)}\prod_{j=0}^{2R}\bigl(1-\eta'_j\mathbf1_{\ell=[z_j]_p}\bigr)=b'_p+\frac1{p+1}\sum_{0\le a<c\le2R}\eta'_a\eta'_c\mathbf1_{[z_a]_p=[z_c]_p}\prod_{a<j<c}\bigl(1-\eta'_j\mathbf1_{[z_j]_p=[z_a]_p}\bigr).
	\end{aligned}
\end{equation*}
When $A_p\ne\varnothing$, \eqref{eq:pfactor} vanishes unless $[z_j]_p=\ell$ for one common line $\ell$ and every $j\in A_p$. If $a=\min A_p$ and $c=\max A_p$, the factors with $j<a$ or $j>c$ satisfy
\begin{equation*}
	\begin{aligned}
		\prod_{j<a}\bigl(1-\eta'_j\mathbf1_{\ell=[z_j]_p}\bigr)&=1-\sum_{h<a}\eta'_h\mathbf1_{\ell=[z_h]_p}\prod_{h<j<a}\bigl(1-\eta'_j\mathbf1_{\ell=[z_j]_p}\bigr),\\
		\prod_{j>c}\bigl(1-\eta'_j\mathbf1_{\ell=[z_j]_p}\bigr)&=1-\sum_{h>c}\eta'_h\mathbf1_{\ell=[z_h]_p}\prod_{c<j<h}\bigl(1-\eta'_j\mathbf1_{\ell=[z_j]_p}\bigr).
	\end{aligned}
\end{equation*}
Thus the selected indices for each prime lie between one least and one greatest index. After extracting $b'_p$, the factor $1/((p+1)b'_p)$ occurs once, and each maximal interval of consecutive indices contained in $A_p$ contributes $1/V_i$.

Similar to the construction of \cite[Sections 4.1--4.3]{PD}, we define the auxiliary operators $G_j$ and $E_j$ as follows. For each $i$, we define the set of pairs and its measure by
\begin{equation*}
	\mathcal X_i=\{(p,\,\ell):\ p\in\mathcal P_i,\ \ell\in\mathbb P^1(\mathbb F_p)\},\qquad \lambda_i(p,\,\ell)=\nu_i(p).
\end{equation*}
Positions have counting measure, active lists have measure $\bigotimes_i\nu_i^{\otimes(J+1)}$, and $\mathfrak m_i\in\mathcal X_i^{k_i}$ has measure $\lambda_i^{\otimes k_i}/k_i!$. Functions are invariant under permutations of the $k_i$ coordinates of $\mathfrak m_i$. The underlying space allows repetitions; the operators impose the complete-list restrictions.

Fix $0<\vartheta<1$ with $\vartheta^2>\sqrt\rho$. We define $G_j$ and $E_j$ as follows.

The operator $G_j$ fixes $z$ and the active lists. In each $\mathfrak m_i$, remove any subset of the coordinates $(p,\,\ell)$ satisfying $\ell=[z]_p$, and append $l\ge0$ ordered pairs $(p_a,\,[z]_{p_a})$, where $p_a\in\mathcal P_i$ and $1\le a\le l$. If $d$ coordinates are removed and $h$ of the pre-existing coordinates that remain satisfy $\ell=[z]_p$, the weight is
\begin{equation*}
	\biggl(-\frac{\eta'_j}{\vartheta}\biggr)^d\biggl(\frac{q_j}{\vartheta^2}\biggr)^h\frac1{l!}\prod_{a=1}^l\biggl(-\frac{\eta'_jV_i}{\vartheta}\nu_i(p_a)\biggr).
\end{equation*}
Sum over all subsets of coordinates to be removed and all ordered tuples $(p_1,\,\ldots,\,p_l)$, independently in the $K$ groups; repeated pairs are distinguished by their indices.

For $E_j$ evaluated at $(z,\,z')$, retain the geometric kernel, list symmetrizations, and all endpoint and cross-edge restrictions.

For the shared entries, let
\begin{equation*}
D=\prod_{i=1}^K\prod_{h=1}^J p_{i,h},\qquad
 d_0\le D<2d_0,\qquad D\mid\det(z,\,z').
\end{equation*}
Both restrictions on $D$ are retained in each of the two terms
of the geometric kernel. Since $D$ is squarefree and $z,z'$ are
primitive, the divisibility condition is equivalent to
\begin{equation*}
[z]_p=[z']_p\qquad(p\mid D).
\end{equation*}
This shared-prime condition remains in force when the other
active-list divisibility factors are replaced below.

For each $1\le i\le K$, we retain $p_{i,1},\,\ldots,\,p_{i,J}$
from the list at $z$ as the first $J$ entries of the list at $z'$.
Sum over all subsets $S\subseteq\{1,\,\ldots,\,K\}$,
with coefficient $1$ for each choice. For $i\in S$, append
$(p_{i,J+1},\,[z]_{p_{i,J+1}})$ to $\mathfrak m_i$.
For $i\notin S$, discard the departing entry $p_{i,J+1}$
without appending a pair to $\mathfrak m_i$.
 Fill the unshared slot in each list at $z'$ either by removing an existing coordinate $(p,\,\ell)$ of $\mathfrak m_i$ satisfying $\ell=[z']_p$ and using $p$, or by summing over $p'_i\in\mathcal P_i$ with measure $\nu_i$. Let $I$ be the set of groups using the first option. Let $h_s$ count the coordinates of $\mathfrak m$ before the operation satisfying $\ell=[z]_p$, and let $h_t$ count the coordinates after the operation satisfying $\ell=[z']_p$. Replace the original damping and divisibility factors by
\begin{equation*}
	\vartheta^{h_s+h_t}\prod_{i\in I}\frac1{V_i}\prod_{\substack{1\le i\le K\\i\notin I}}\nu_i(p'_i),
\end{equation*}
where the sums over $p'_i$ apply to $i\notin I$. For $i\in I$, only coordinates present before the operation may be used; a pair appended during this operation cannot also be removed to fill a list at $z'$. On alternate edges, transpose and conjugate the geometric kernel and use the same rules for $\mathfrak m$. At an internal index $j$, a pair retained in $\mathfrak m$ and satisfying $\ell=[z_j]_p$ contributes
\begin{equation}\label{eq:thetaid}
	\vartheta\,(q_j/\vartheta^2)\,\vartheta=q_j.
\end{equation}
A pair appended by $G_j$ contributes $(-\eta'_jV_i/\vartheta)\vartheta=-\eta'_jV_i$, and a pair removed by $G_j$ contributes $\vartheta(-\eta'_j/\vartheta)=-\eta'_j$. 

Before truncation, impose the global distinctness condition below
and fix $p\in\mathcal P_i$.

If $A_p=\varnothing$, no selection contributes $1$, while
appending at $a$ and removing at $c>a$ give the remaining terms in
\begin{equation*}
	1+\frac{1}{(p+1)b'_p}
	\sum_{0\le a<c\le2R}\eta'_a\eta'_c
	\mathbf1_{[z_a]_p=[z_c]_p}
	\prod_{a<j<c}
	\bigl(1-\eta'_j\mathbf1_{[z_j]_p=[z_a]_p}\bigr),
\end{equation*}
since $V_i\nu_i(p)=1/((p+1)b'_p)$.

If $A_p\ne\varnothing$, let $\kappa_p$ count its maximal
consecutive intervals. Distinctness forces retention between
these intervals; shared entries and transfers enforce a common
line $\ell$ at all active indices, or the contribution is zero.
Initial selection in a list or by $G_j$ gives, respectively,
\begin{equation*}
	\nu_i(p)V_i^{-(\kappa_p-1)}
	=V_i\nu_i(p)V_i^{-\kappa_p}
	=\frac{V_i^{-\kappa_p}}{(p+1)b'_p}.
\end{equation*}
In the suffix expansion, the constant $1$ corresponds to
immediate discarding, or an empty suffix. The prefix and suffix
identities above, together with \eqref{eq:thetaid}, therefore give
\begin{equation*}
	\frac{V_i^{-\kappa_p}}{(p+1)b'_p}
	\prod_{\substack{0\le j\le2R\\j\notin A_p}}
	\bigl(1-\eta'_j\mathbf1_{\ell=[z_j]_p}\bigr).
\end{equation*}
Both cases reproduce \eqref{eq:pfactor}, divided by $b'_p$
and with the list normalization.

The $l!$ orderings of distinct appended primes cancel $1/l!$.
Empty initial and final auxiliary tuples leave no unmatched
$\vartheta$-factors. Multiplication over $p$ proves the identity below.

Let $b$ be $1$ when $z=e_1=(1,\,0)^T$ and $k_1=\cdots=k_K=0$, and $0$ otherwise, independently of the active lists. Then
\begin{equation*}
	\|b\|^2=\prod_i s_i^{J+1}=1+o(1),\quad \mathcal M_R(u,\,v,\,r)=\biggl(\prod_p b'_p\biggr)\bigl\langle b,\,G_0E_0G_1\cdots E_{2R-1}G_{2R}b\bigr\rangle_{\mathrm{distinct}}.
\end{equation*}
The subscript requires pairwise distinct values for all primes in the initial active lists, all primes summed with $\nu_i$ in $E_j$, and all prime coordinates of pairs appended by $G_j$. Moving an existing pair from $\mathfrak m_i$ to an active list introduces no additional prime variable. We temporarily omit the distinctness condition to estimate the individual operators.

For an operator $A$, we write $Af(s)=\sum_t a(s,\,t)f(t)m(t)$, where $s=(z,\,\boldsymbol p,\,\mathfrak m)$ and $t=(z',\,\boldsymbol p',\,\mathfrak m')$ are its arguments and $m(t)$ is the measure specified above. We call the function $a(s,\,t)$ the kernel of $A$. For $A=G_j$ or $E_j$, we restrict its kernel as follows:
\begin{equation*}
a(s,\,t)\longmapsto a(s,\,t)\mathbf1_{\sum_i|\mathfrak m_i|\le \lceil L^2\rceil}\mathbf1_{\sum_i|\mathfrak m_i'|\le \lceil L^2\rceil},
\end{equation*}
where $|\mathfrak m_i|$ denotes the number of coordinates of $\mathfrak m_i$. We keep the notation $G_j,E_j$ for these restricted operators. For a positive weight $w$, we define
\begin{equation}\label{eq:schur}
	R(A)=\sup_s\frac1{w(s)}\sum_t|a(s,\,t)|w(t)m(t),\quad C(A)=R(A^*),\quad \|A\|\le\sqrt{R(A)C(A)}.
\end{equation}
Here $m(t)$ is the state measure just defined. The factorial identities
\begin{equation*}
	\frac{\binom{k+a}{a}}{(k+a)!}=\frac1{k!a!},\quad \frac{\binom{k+a}{a}a!}{(k+a)!}=\frac1{k!}
\end{equation*}
show that the adjoint of $G_j$ interchanges removing and appending coordinates, while the adjoint of $E_j$ reverses the transfer of entries between active lists and $\mathfrak m_i$. In $E_j^*$, the factor $1/V_i$ for an entry moved from $\mathfrak m_i$ to an active list becomes the factor for the reverse transfer. These identities remain valid when coordinates of $\mathfrak m_i$ repeat.

For large $x$ we have $V_i<1$. Choose a fixed $v_0>1$ so large that $\sqrt\rho/\vartheta^2+1/(\vartheta v_0)<1$, and take $w=v_0^{k_1+\cdots+k_K}$. If exactly $h_i$ coordinates $(p,\,\ell)$ of $\mathfrak m_i$ satisfy $\ell=[z]_p$, its weighted row and column factors for $G_j$ are bounded, respectively, by
\begin{equation*}
	\exp\biggl(\frac{v_0\eta'_jV_i s_i}{\vartheta}\biggr)\biggl(\frac{q_j}{\vartheta^2}+\frac{\eta'_j}{\vartheta v_0}\biggr)^{h_i},\qquad \exp\biggl(\frac{v_0\eta'_j s_i}{\vartheta}\biggr)\biggl(\frac{q_j}{\vartheta^2}+\frac{\eta'_jV_i}{\vartheta v_0}\biggr)^{h_i}.
\end{equation*}
Both parentheses are at most $1$, and $s_i\asymp1$. Multiplication over the groups and the weighted Schur test therefore give $R(G_j),\,C(G_j),\,\|G_j\|\ll_{K,\,\rho}1$.

For fixed $z\in\mathcal Z_0$, choose $w_z\in\mathbb Z^2$ with $\det(z,\,w_z)=1$. An admissible vector $z'$ with prescribed determinant $j_1=\det(z,\,z')$ has the form
\begin{equation}\label{eq:target}
	z'=j_1w_z+hz,\qquad \frac{\tau(z')}{\tau(z)}=h+j_1\frac{\tau(w_z)}{\tau(z)}\in[1/16,\,16].
\end{equation}
We call the contribution containing the exact indicator
$\mathbf1_{\det(z,\,z')/D=b-a}$ the raw term.
The comparison term is the contribution containing the major-arc
integral $\int_{\mathfrak M_1}e\bigl(\theta(\det(z,\,z')/D-b+a)\bigr)\,\mathrm{d}\theta$,
which is subtracted from the raw term.
Both terms retain all common kernel factors and restrictions.

There are $O(1)$ possible integers $h$. Thus the raw term, with $j_1=D(b-a)$ fixed, admits $O(1)$ vectors $z'$. The comparison term has $O(Y)$ choices of $j_1/D$ and multiplier at most $|\mathfrak M_1|\ll L^{3A_0}/Y$. There are at most $\lceil L^2\rceil^K$ choices of coordinates to move from $\mathfrak m_i$ to the lists at $z'$, while the sums of the measures $\nu_i$ and the remaining factors are bounded. Applying the same count to the adjoint gives
\begin{equation}\label{eq:crudebd}
	R(G_j),\,C(G_j),\,\|G_j\|\ll_{K,\,\rho}1,\qquad R(E_j),\,C(E_j),\,\|E_j\|\ll_{K,\,\rho}\lceil L^2\rceil^K(1+L^{3A_0})\le L^{C_*},
\end{equation}
for a fixed $C_*=C_*(K,\,A_0,\,\rho)$. These absolute bounds also hold after inserting choice multipliers of modulus at most one, provided they preserve symmetry in the coordinates of $\mathfrak m_i$ present before the operation.

We now prove a stronger bound for $E_j$. An edge is clean if no entry is transferred between the active lists and any $\mathfrak m_i$, and dirty otherwise. Write $E_j^{\mathrm{clean}}$, $E_j^{\mathrm{dirty,raw}}$, and $E_j^{\mathrm{dirty,comp}}$ for its clean part and the two terms of its dirty part.

First consider a clean edge. Partition positions by intervals of length $d_0Y$ for $z_2/\tau(z)$, and then by the lines $[z]_p$ at the shared primes $p\mid D$. Since
\begin{equation*}
	\frac{z'_2}{\tau(z')}-\frac{z_2}{\tau(z)}=\frac{\det(z,\,z')}{\tau(z)\tau(z')},\qquad |\det(z,\,z')|\ll d_0Y,
\end{equation*}
only a bounded number of neighboring intervals interact. Fix a pair of these intervals, the shared lists, and the unchanged tuples $\mathfrak m_i$. Choose a good primitive position $z_*$ in the pair and $w_*\in\mathbb Z^2$ with $\det(z_*,\,w_*)=1$; if no such position exists, the block is zero. The common line conditions give the lattice
\begin{equation*}
	\Lambda_D=\{hz_*+\ell Dw_*:\ h,\,\ell\in\mathbb Z\}.
\end{equation*}
Indeed, writing $z=hz_*+kw_*$, the condition at each shared prime $p$ is $p\mid k$, and hence $D\mid k$. Put $\alpha=D\tau(w_*)/\tau(z_*)$. The position bounds and the first good-state test imply
\begin{equation}\label{eq:023}
	|\ell|\ll Y,\qquad \frac{\tau(z)}{\tau(z_*)}=h+\ell\alpha\asymp1,\qquad \|k\alpha\|>Y^{-0.7}\quad(1\le k\le Y^{0.2}).
\end{equation}

The lattice $\{(\ell/Y,\,h+\ell\alpha):\ h,\,\ell\in\mathbb Z\}$ has covolume $Y^{-1}$. Let $\lambda$ be the length of its shortest nonzero vector. The inequality $\lambda<Y^{-0.8}$ would contradict \eqref{eq:023}; the planar convex-body theorem gives $\lambda\ll Y^{-1/2}$. Thus
\begin{equation*}
	Y^{-0.8}\le\lambda\ll Y^{-1/2},\qquad |v_1|=\lambda,\qquad |v_2|\ll(Y\lambda)^{-1},
\end{equation*}
where $v_1,v_2$ are a lattice basis obtained by completing a shortest vector $v_1$ and reducing the component of $v_2$ along it. In this basis the bounded region in \eqref{eq:023} is contained in an integer-coordinate box with side parameters $A'\asymp\lambda^{-1}$ and $B'\asymp Y\lambda$. Consequently
\begin{equation}\label{eq:latbox}
	A'B'\asymp Y,\qquad Y^{0.1}\le A',\,B'\le Y^{0.9}.
\end{equation}
Changing the sign of one basis vector if necessary, let $\mathcal B_D$ be the positively oriented corresponding basis matrix of $\Lambda_D$. All relevant integer coordinates belong to $\mathcal R=\{\mathbf k\in\mathbb Z^2:\ |k_1|\le A',\ |k_2|\le B'\}$, and
\begin{equation}\label{eq:latbox-e}
	z=\mathcal B_D\mathbf k,\qquad z'=\mathcal B_D\boldsymbol\ell,\qquad \det\mathcal B_D=D,\qquad \frac{\det(z,\,z')}{D}=\det(\mathbf k,\,\boldsymbol\ell).
\end{equation}
In particular, $\mathcal B_D$ is invertible modulo every prime not dividing $D$.

We next prove the signed estimate on this box. For an unshared product $a=p_1\cdots p_K$ with $p_i\in\mathcal P_i$, let $m(a)=\prod_i\nu_i(p_i)$, and set $m(a)=0$ for other integers. The disjoint prime sets make the representation unique, and
\begin{equation*}
	m(a)\ll_K Y^{-1}\quad\text{when }\eta(a/Y)\ne0,\qquad \sqrt{m(a)m(b)}=\frac1Y\sqrt{Ym(a)}\sqrt{Ym(b)}.
\end{equation*}
Multiplication by $\sqrt{m(a)}$ converts the product measure to counting measure. Extend by zero to the counting space $\ell^2(\mathcal R\times\mathbb Z)$. The enlarged clean operator $\mathcal E_0$ has kernel
\begin{equation*}
	\mathcal E_0((\mathbf k,\,b),\,(\boldsymbol\ell,\,a))=\frac1Y\psi_1\biggl(\frac{\det(\mathbf k,\,\boldsymbol\ell)}Y\biggr)\int_{\mathbb T\setminus\mathfrak M_1}e\bigl(\theta(\det(\mathbf k,\,\boldsymbol\ell)-b+a)\bigr)\,\mathrm d\theta.
\end{equation*}
The factors $\eta(a/Y)\sqrt{Ym(a)}$ and $\eta(b/Y)\sqrt{Ym(b)}$, complete-list restrictions, goodness, primitivity, and the factors $\vartheta^{h_s}$ and $\vartheta^{h_t}$ from the unchanged tuples $\mathfrak m_i$ are bounded endpoint multipliers; $d_0/D\asymp1$ is fixed on the block. The position restriction is multiplication by $\chi(\mathcal B_D\mathbf k)$. Thus, if $\iota_s,\,\iota_t$ are the isometric inclusions after measure conversion and zero extension, and $D_s,\,D_t$ are the resulting endpoint multipliers, then
\begin{equation}\label{eq:pcontr}
	\|\iota_s^*D_s\mathcal E_0D_t\iota_t\|\le\|D_s\|\,\|\mathcal E_0\|\,\|D_t\|\ll_{K,\,\rho}\|\mathcal E_0\|.
\end{equation}
For the moment, this allows a prime to occur among the unshared entries at both $z$ and $z'$; we estimate that correction below.

Fourier transformation in the integer product variable reduces $\mathcal E_0$ to the matrices
\begin{equation*}
	\frac1Y\biggl[\psi_1\biggl(\frac{\det(\mathbf k,\,\boldsymbol\ell)}Y\biggr)e\bigl(\theta\det(\mathbf k,\,\boldsymbol\ell)\bigr)\biggr]_{\mathbf k,\,\boldsymbol\ell},\qquad \theta\notin\mathfrak M_1.
\end{equation*}
Define $\widehat\psi_1(v)=\int_{\mathbb R}\psi_1(t)e(-vt)\,\mathrm dt$ and use Fourier inversion. For $|v|\le L^{A_0}/2$, Dirichlet approximation applied to $\xi=\theta+v/Y$ gives integers $c,\,d$ with
\begin{equation*}
	(c,\,d)=1,\qquad L^{A_0}<d\le\lceil Y/L^{A_0}\rceil,\qquad |\xi-c/d|\le\frac1{d\lceil Y/L^{A_0}\rceil}\le d^{-2}.
\end{equation*}
Indeed, a denominator $d\le L^{A_0}$ would place $\theta$ in $\mathfrak M_1$. For the matrix $T_{A',B'}(\xi)=(e(\xi hk))_{|h|\le A',\,|k|\le B'}$, its Gram matrix gives
\begin{equation*}
	\|T_{A',B'}(\xi)\|^2\ll\sum_{|h|\le2A'}\min\bigl(B',\,\|h\xi\|^{-1}\bigr)\ll(A'/d+1)(B'+d\log(2d)).
\end{equation*}
At $h=0$ the minimum means $B'$. For the last bound, split the $h$-range into intervals of length at most $d/2$; within each interval the fractional parts are separated by at least $1/(2d)$, and sum their reciprocal distances.

Up to a permutation of columns, the determinant matrix is the tensor product of $T_{A',B'}(\xi)$ and its transposed conjugate. By \eqref{eq:latbox},
\begin{equation*}
	\frac1Y\bigl\|[e(\xi\det(\mathbf k,\,\boldsymbol\ell))]_{\mathbf k,\,\boldsymbol\ell}\bigr\|\ll\frac1d+Y^{-0.1}\log(2Y)+\frac{d\log(2d)}Y\ll_K L^{0.2-A_0}+Y^{-0.1}\log Y.
\end{equation*}
The range $|v|>L^{A_0}/2$ is negligible by rapid decay of $\widehat\psi_1$ and the trivial matrix bound $O(Y)$. This proves the same bound for $\|\mathcal E_0\|$.

Let $\mathcal E_{\mathrm{coll}}$ be the clean operator restricted to terms in which a prime occurs among the unshared entries at both $z$ and $z'$. Each equality prescribes one prime summed with measure $\nu_i$, of mass at most $\Delta$. The count of admissible vectors in \eqref{eq:target}, applied in both directions, gives
\begin{equation*}
	R(\mathcal E_{\mathrm{coll}}),\,C(\mathcal E_{\mathrm{coll}}),\,\|\mathcal E_{\mathrm{coll}}\|\ll_{K,\,\rho}(1+L^{3A_0})\Delta.
\end{equation*}
Subtracting this operator restores the cross-edge restriction. The bounded number of neighboring boxes and the list symmetrizations therefore give
\begin{equation}\label{eq:cleanbd}
	\|E_j^{\mathrm{clean}}\|\ll_{K,\,\rho}L^{0.2-A_0}+Y^{-0.1}\log Y.
\end{equation}

For dirty edges, let $S$ be the set of groups in which a departing list entry is appended to $\mathfrak m_i$, and let $I$ be the set of groups in which a coordinate of $\mathfrak m_i$ is moved to the list at $z'$. Fix $S$ and $I$. If $I\ne\varnothing$, put $i_*=\max I$ and fix a coordinate $(p_*,\,\ell_*)$ present in $\mathfrak m_{i_*}$ before the operation to supply the entry at $z'$. There are at most $\lceil L^2\rceil$ such choices. For the unshared products $b$ at $z$ and $a$ at $z'$, write
\begin{equation*}
	P_{\mathrm{full}}=Db=\prod_{i=1}^K\prod_{h=1}^{J+1}p_{i,h},\qquad Z=\prod_{i\in I\setminus\{i_*\}}p'_i,\qquad T_f=\prod_{i\notin I}p'_i,\qquad a=p_*ZT_f.
\end{equation*}
Here $p'_i$ comes from $\mathfrak m_i$ for $i\in I$ and is summed with measure $\nu_i$ otherwise; $Z$ denotes this product, not the sifting parameter. The raw determinant equation is
\begin{equation}\label{eq:rawdet}
	j_1=P_{\mathrm{full}}-Dp_*ZT_f,\qquad j_1\equiv P_{\mathrm{full}}\not\equiv0\,(\textrm{mod}\,p_*).
\end{equation}
The prescribed line $[j_1w_z+hz]_{p_*}=\ell_*$ either is impossible or determines $h\equiv h_*\,(\textrm{mod}\,p_*)$. The residue $h_*$ is independent of $D,Z$. Substitution in \eqref{eq:target} gives
\begin{equation*}
	\biggl\|DZT_f\frac{\tau(w_z)}{\tau(z)}-\frac{P_{\mathrm{full}}\tau(w_z)/\tau(z)+h_*}{p_*}\biggr\|\le\frac{16}{p_*}.
\end{equation*}
By \eqref{eq:numass}, the product measure of the primes $p'_i$ with $i\notin I$ is boundedly comparable to that in the second good-state test. Except for choices of these primes of total mass $O_K(e^{-cL^{0.1}})$, that test separates the phases for distinct $DZ$ by $100e^{-L^{a_{i_*}}}$. The displayed interval has diameter at most $32/p_*\le32e^{-L^{a_{i_*}}}$, so it permits at most one $DZ$. Its prime factors separate $D$ from $Z$, since the primes taken from $\mathfrak m_i$ are excluded from the lists at $z$. Thus at most one tuple of omitted entries in the lists at $z$ is admissible, with symmetrization mass $(J+1)^{-K}$.

The resulting determinant permits $O(1)$ vectors $z'$. Summing over repeated eligible coordinates of $\mathfrak m_i$ costs only a constant, because $h\vartheta^{h-1}\ll_\vartheta1$ for $h\ge1$. For the exceptional choices of the primes $p'_i$ with $i\notin I$, use the crude count $\lceil L^2\rceil^K$. Define
\begin{equation}\label{eq:epsk}
	\varepsilon_K=C_{K,\,\rho}\bigl(\lceil L^2\rceil(J+1)^{-K}+\lceil L^2\rceil^K e^{-cL^{0.1}}\bigr).
\end{equation}
When $I=\varnothing$, we obtain the row bound $O_{K,\,\rho}(1)$ from \eqref{eq:target}. For an operator $A$ with kernel $a(s,\,t)$, we write the column quantity in \eqref{eq:schur} explicitly as
\begin{equation*}
C(A)=\sup_t\frac1{w(t)}\sum_s|a(s,\,t)|w(s)m(s)=R(A^*).
\end{equation*}
We estimate $C(A)$ by applying the row estimate to $A^*$. In the adjoint, we transfer an entry from $\mathfrak m_i$ to $\boldsymbol p_i$ for $i\in S$, and from $\boldsymbol p_i$ to $\mathfrak m_i$ for $i\in I$, so the roles of $S$ and $I$ are interchanged. We therefore obtain the following bounds for $R$ and $C$ for the part of $E_j^{\mathrm{dirty,raw}}$ with fixed $S$ and $I$:
\begin{equation*}
	\begin{array}{c|c|c|c}
		S&I&R&C\\ \hline
		\varnothing&I\ne\varnothing&\varepsilon_K&O_{K,\,\rho}(1)\\
		S\ne\varnothing&\varnothing&O_{K,\,\rho}(1)&\varepsilon_K\\
		S\ne\varnothing&I\ne\varnothing&\varepsilon_K&\varepsilon_K
	\end{array}
\end{equation*}
For sufficiently large fixed $K$, weighted Schur and $J+1\asymp L^{0.01}$ give
\begin{equation*}
	\|E_j^{\mathrm{dirty,raw}}\|\ll_{K,\,\rho}\sqrt{\varepsilon_K}\ll_{A,\,K,\,\rho}L^{1-0.005K}+L^{-A}\qquad(A>0).
\end{equation*}

For the dirty comparison part, a coordinate $(p,\,\ell)$ moved from $\mathfrak m_i$ to a list at $z'$ imposes $[z']_p=\ell$. Since $p\nmid D$, \eqref{eq:latbox-e} converts this into a nonzero linear congruence in the coordinates of $\mathcal R$. For integers $a_0,b_0$ not both divisible by $p$,
\begin{equation}\label{eq:dirty-hit-count}
	\#\{(h,\,\ell)\in\mathcal R:\ a_0h+b_0\ell\equiv0\,(\textrm{mod}\,p)\}\ll\frac{A'B'}p+\max(A',\,B')\ll Y(p^{-1}+Y^{-0.1}).
\end{equation}
To prove this, fix the coordinate whose coefficient may vanish and count one residue class in the other coordinate. Multiplying by $L^{3A_0}/Y$ and summing over at most $\lceil L^2\rceil^K$ choices of coordinates from $\mathfrak m$ gives a row bound $O_{K,\,\rho}(L^{3A_0}\lceil L^2\rceil^K(p_{\min}^{-1}+Y^{-0.1}))$. Use \eqref{eq:crudebd} for the opposite direction; when $I=\varnothing$ and $S\ne\varnothing$, apply the same argument to the adjoint. Thus, for a fixed $C_\dagger=C_\dagger(K,\,A_0,\,\rho)$,
\begin{equation}\label{eq:dirty-comparison-bound}
	\|E_j^{\mathrm{dirty,comp}}\|\ll_{K,\,\rho}L^{C_\dagger}(p_{\min}^{-1}+Y^{-0.1})^{1/2}\ll_{A,\,K,\,A_0,\,\rho}L^{-A}\qquad(A>0).
\end{equation}
Here $p_{\min}\ge e^{L^{0.1}}$ and $\log Y\gg L^{0.1}$. Combining the three edge estimates, choosing $A_0>G+3$ and then $K>200(G+3)$, proves
\begin{equation}\label{eq:esmall}
	\|E_j\|\le L^{-G}\qquad(0\le j<2R).
\end{equation}
These choices do not depend on the particular exponents $a_i$; those exponents affect only the threshold for $x$ and the implied constants.

\subsection{\texorpdfstring{Distinct prime selections and the moment bound}{Distinct prime selections and the moment bound}}\label{sec:memory}

The estimates for $E_j$ allow equal values among the initial prime entries and the primes newly selected by $E_j$ and $G_j$. We now require all these values to be distinct in the matrix element defining $\mathcal M_R(u,\,v,\,r)$ and prove the following arithmetic estimate.

\begin{proposition}\label{prop:par}
	Fix $E_0,\delta>0$ and suppose $M,N\ge x^\delta$, $MN\asymp x$. Choosing $A_0$ sufficiently large and then a sufficiently large fixed $K$, independently of the particular exponents $a_i$, gives
	\begin{equation}\label{eq:moment}
		\frac1{d_0YMN}\sum_{P\in\Omega\cap\mathcal C}\bigl\langle u_P,\,\bigl(\mathcal A_{-2}^{\mathcal C}(\mathcal A_{-2}^{\mathcal C})^*\bigr)^Ru_P\bigr\rangle\ll L^{-2RE_0}.
	\end{equation}
	The one-edge root-replacement conclusion of Subsection~\ref{sec:roots}, including either endpoint failing its good-state tests, holds with the same normalization and error $O_A(L^{-A})$.
\end{proposition}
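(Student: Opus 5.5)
The plan is to combine the root-averaging proposition of Subsection~\ref{sec:roots} with the operator identity for $\mathcal M_R(u,v,r)$ from Subsection~\ref{sec:memops}. By the parity conjugacy (Proposition~\ref{prop:parity-conjugacy}) and root averaging, the left side of \eqref{eq:moment} equals
\[
\frac1{3\zeta(2)}\int\!\!\int\!\!\int\mathcal M_R\,dr\,dt_0\,ds+O_A(L^{-2AR}).
\]
Taking $A=E_0$, it suffices to prove $\mathcal M_R(u,v,r)\ll L^{-2RE_0}$ uniformly in the real root variables. Write
\[
\mathcal M_R=\Bigl(\prod_p b'_p\Bigr)\langle b,\,G_0E_0G_1\cdots E_{2R-1}G_{2R}b\rangle_{\mathrm{distinct}},
\]
with $0<\prod_pb'_p\le1$ and $\|b\|^2=1+o(1)$. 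The whole task is then to remove the distinctness subscript at acceptable cost. After that, the norm bounds $\|G_j\|\le C_{K,\rho}$ and $\|E_j\|\le L^{-G}$ give
\[
|\langle b,G_0E_0\cdots G_{2R}b\rangle|\le C_{K,\rho}^{2R+1}L^{-2RG}\ll L^{-2RE_0}
\]
once $G=E_0+1$, say. This requires first fixing $A_0>G+3$ and then $K>200(G+3)$.

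To handle distinctness, I would use inclusion--exclusion over coincidence patterns among the prime variables. These are the initial list entries, the primes $p'_i$ summed with $\nu_i$ in the $E_j$, and the appended primes in the $G_j$. Concretely, write the distinct sum as a sum over set partitions $\pi$ of the prime variables, weighted by the M\"obius function of the partition lattice, of the sum with variables forced equal within the blocks of $\pi$. For each nontrivial $\pi$, I would identify the forced coincidences with modified operators. Forcing a prime chosen at step $j$ to equal one chosen at an earlier step $j'$ replaces a $\nu_i$-sum of mass $s_i\asymp1$ by a single point of mass at most $\Delta\ll e^{-L^{0.1}}$. The inserted coincidence operators then satisfy the same Schur bounds \eqref{eq:crudebd} multiplied by $\Delta$ per identified pair. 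At most $O(L^{C})$ choices of earlier partner occur per step, because the $\mathfrak m$-tuples are truncated at $\lceil L^2\rceil$ and the lists have length $J+1$. Hence each coincidence costs a factor $L^{C}\Delta\le e^{-L^{0.1}/2}$.

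The main obstacle is combinatorial. The number of partitions is huge, and the M\"obius weights $(-1)^{k-1}(k-1)!$ grow, so a naive count could overwhelm the saving. To control this I would not expand fully. Instead I would use the standard Brydges--Penrose / tree-graph style bound, or more simply impose distinctness sequentially. At each step $j$, write $\mathbf 1_{\text{new prime}\notin\text{previous}}=1-\mathbf 1_{\in\text{previous}}$, and expand only one layer at a time as an operator identity. Each step then gives $E_j=E_j^{\mathrm{free}}-E_j^{\mathrm{coll}}$ and $G_j=G_j^{\mathrm{free}}-G_j^{\mathrm{coll}}$. Here $\|E_j^{\mathrm{coll}}\|\ll L^{C_*}\Delta\cdot(\text{number of previous primes})$, and the number of previous primes is at most $O(RKJ+L^2)=O(L^{0.6})$. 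The collision parts are therefore bounded by $e^{-cL^{0.1}}$. This is exponentially smaller than the $L^{-G}$ saving, so that
\[
\|E_j^{\mathrm{free}}-E_j^{\mathrm{coll}}\|\le 2L^{-G}\qquad\text{and}\qquad\|G_j^{\mathrm{free}}-G_j^{\mathrm{coll}}\|\ll1.
\]
A subtlety is that "previous primes" depends on the path history, so the decomposition is not a product of fixed operators. I would handle this by enlarging the state to carry the set of used primes, which is already implicit in the $\mathfrak m$-tuples and lists. On that enlarged space the restricted operators satisfy the same weighted Schur bounds, giving the product estimate with $(2+o(1))^{2R}$ absorbed into $C_{K,\rho}^{2R}$.

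Finally, the one-edge statement follows directly from the one-edge case of the root-averaging proposition, which already gives error $O_A(L^{-A})$, including the case where either endpoint fails its good-state tests. No moments are needed there, only the same finite-residue count with the absolute kernel sums bounded by \eqref{eq:crudebd}.
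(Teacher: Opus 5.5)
Your reduction agrees with the paper's. Parity conjugacy and root averaging reduce \eqref{eq:moment} to a bound for $\mathcal M_R(u,v,r)$ uniform in the root variables. The operator identity writes this as $(\prod_p b'_p)\langle b,G_0E_0\cdots G_{2R}b\rangle_{\mathrm{distinct}}$, and the one-edge clause is inherited from Subsection~\ref{sec:roots}. The gap is in the step you yourself single out: removing distinctness while keeping the signed bounds $\|E_j\|\le L^{-G}$.

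\begin{enumerate}
\item The set of previously used primes is not implicit in the state. Departing entries with $i\notin S$ are discarded, and coordinates removed from $\mathfrak m$ by $G_j$ are forgotten, yet distinctness is imposed against all of them.
\item If you add an explicit history label $H$, the final history is not determined by the endpoint and must be summed over, so one endpoint vector becomes $b\otimes\mathbf 1$. Use counting measure on $H$, the natural choice since the lift then acts fibrewise as $E_j$. Then this vector has norm $\|b\|$ times the square root of the number of admissible histories, which is $\exp(L^{O(1)})$, and the estimate $\|b\|^2\prod_j\|E_j\|$ is destroyed. A weight $w_p$ on recorded primes small enough to tame this sum costs a factor of order $w_p^{-1/2}$ per newly recorded prime in the same argument.
\item The $L^{-G}$ bound is a cancellation estimate (the lattice and exponential-sum argument for clean edges), not a weighted Schur bound. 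So ``the same Schur bounds'' on an enlarged space would not deliver it.
\end{enumerate}

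Your first suggestion, partition inclusion--exclusion with each coincidence bounded by a Schur bound times $\Delta$, fails for a related reason. A constraint $p_a=p_{a'}$ between different steps is not an operator on the original space. Treating it in absolute value discards every signed saving and leaves $L^{C_*(2R+1)}\Delta^{r}$. That is large whenever the rank $r$ is below a constant multiple of $R\log L/L^{0.1}$, because $L^{C_*(2R+1)}$ is of size $\exp(C_*L^{1/2}\log L)$ while $\Delta\ll e^{-L^{0.1}}$.

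The missing idea is to localize each coincidence by orthogonality, $\mathbf 1_{p_a=p_{a'}}=\int_{\mathbb T}e(\theta(p_a-p_{a'}))\,d\theta$. Then each phase multiplies only the initial entry or the single $E_j$ or $G_j$ at which that prime is selected. On a $G_j$-tuple the phases are symmetrized over the coordinates, so that the crude bounds survive.

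The paper expands the distinctness product as in \eqref{eq:025}; collections of rank $r$ carry total weight at most $|\mathcal B|^{2r}$. It then splits at $r_0\asymp R\log L/L^{0.1}=o(R)$:
\begin{itemize}
\item For $r\ge r_0$, the absolute bound with one factor $\Delta$ per rank unit beats $L^{C_*(2R+1)}$.
\item For $r<r_0$, at most $2r$ of the $E_j$ carry phases and are bounded by $L^{C_*}$. The other $2R-2r$ keep $L^{-G}$, giving $L^{-2RG+o(R)}$.
\end{itemize}

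You also skip the truncation step. The norm bounds hold for the operators restricted to $\sum_i k_i\le\lceil L^2\rceil$, whereas the identity for $\mathcal M_R$ is unrestricted. So the tail $\mathcal E_{\mathrm{mem}}$ must be shown to be $\ll L^{-2AR}$; the paper does this by inserting a factor $2$ per $G_j$-appended coordinate and using the per-prime bound $\ll R/p$.

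Incidentally, the number of previously selected primes is $O(RL^2)$, not $O(L^{0.6})$, since each of the $2R+1$ operators $G_j$ may append up to $\lceil L^2\rceil$ primes.
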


\begin{proof}
	Take $G=E_0+5$ and fix $A_0,K$ so that \eqref{eq:esmall} holds; $C_*$ in \eqref{eq:crudebd} is now fixed. We first estimate the error caused by restricting the total number of pairs in $\mathfrak m$ to $\lceil L^2\rceil$.
	
	We expand the matrix element using the definitions of $G_j$ and $E_j$ without the restriction $\sum_i k_i\le \lceil L^2\rceil$, writing
\begin{equation*}
\langle b,\,G_0E_0G_1\cdots E_{2R-1}G_{2R}b\rangle_{\mathrm{distinct}}=\sum_\gamma c_\gamma.
\end{equation*}
We let $\gamma$ specify all vectors, prime tuples, permutations, and coordinate choices in the defining sums for $G_j$ and $E_j$, subject to the stated distinctness conditions; $c_\gamma$ includes all measure weights and factorial factors. We write $k_i^{(t)}(\gamma)$ for the number of coordinates of $\mathfrak m_i$ after the first $t$ factors, counted from the left, and define
\begin{equation*}
\mathcal E_{\mathrm{mem}}=\biggl(\prod_p b'_p\biggr)\sum_\gamma|c_\gamma|\mathbf1_{\max_{0\le t\le4R+1}\sum_{i=1}^K k_i^{(t)}(\gamma)>\lceil L^2\rceil}.
\end{equation*}
The $2R$ operators $E_j$ append at most $2KR$ coordinates in total, so every term counted in $\mathcal E_{\mathrm{mem}}$ has at least $\lceil L^2\rceil-2KR$ coordinates appended by the operators $G_j$. We multiply $|c_\gamma|$ by $2$ for each coordinate appended by an operator $G_j$.
	
	For a prime $p\in\mathcal P_i$ that occurs in none of the prime tuples at $z_0,\,\ldots,\,z_{2R}$, we have $A_p=\varnothing$. We sum over the indices $0\le a<c\le2R$ for which $G_a$ appends the coordinate $(p,\,[z_a]_p)$ to $\mathfrak m_i$ and $G_c$ removes that coordinate. We retain this coordinate between those two factors with the inserted factor $2$, getting
	\begin{equation*}
		\begin{aligned}
		\frac2{(p+1)b'_p}\sum_{0\le a<c\le2R}\eta'_a\eta'_c\mathbf1_{[z_a]_p=[z_c]_p}
		\prod_{a<j<c}\Bigl(1-\eta'_j\mathbf1_{[z_j]_p=[z_a]_p}\Bigr)\le\frac{2(2R+1)}{(p+1)b'_p}\ll\frac Rp.
		\end{aligned}
	\end{equation*}
	Indeed, for fixed $a$, the sum over $c$ telescopes to at most $1$. Enlarging the product to all primes in $\bigcup_i\mathcal P_i$, their total factor is bounded by
	\begin{equation*}
		\prod_{p\in\bigcup_i\mathcal P_i}\biggl(1+\frac{C_\rho R}p\biggr)\le\exp\biggl(C_\rho R\sum_{p\in\bigcup_i\mathcal P_i}\frac1p\biggr)\le\exp(C_{K,\,\rho}R).
	\end{equation*}
	There are $O_K(J+R)$ primes appearing in active lists, each with at most $(2R+2)^2$ choices of the indices at which $G_j$ adds or removes its pair. Combining these factors with the path and list bound in Subsection~\ref{sec:roots}, and removing the inserted factors from the discarded terms, gives
	\begin{equation}\label{eq:memtail}
		\begin{aligned}
			\mathcal E_{\mathrm{mem}}&\le2^{-\lceil L^2\rceil+2KR}\exp\bigl(O_{K,\,\rho}(L^{0.73}+(J+R)\log(2R+2)+R)\bigr)\\
			&\ll2^{-\lceil L^2\rceil+2KR}\exp(O_{K,\,\rho}(L^{0.74}))\ll_A L^{-2AR}.
		\end{aligned}
	\end{equation}
	
	Under the restriction $\sum_i k_i\le \lceil L^2\rceil$, let $\mathcal B$ index the initial list entries, the unshared list coordinate in each group for each $E_j$, and the first $\lceil L^2\rceil$ coordinates of the tuple added by each $G_j$ in each group. Order the initial indices first and the others according to the product $G_0E_0G_1\cdots E_{2R-1}G_{2R}$, with a fixed order within each factor. Then
	\begin{equation}\label{eq:birthct}
		|\mathcal B|\le K(J+1)+2KR+K(2R+1)\lceil L^2\rceil\le L^{O_K(1)}.
	\end{equation}
	For each term in the expansion, let $\mathcal B_{\mathrm{act}}\subseteq\mathcal B$ consist of the initial indices, the indices where $E_j$ selects a prime without taking it from $\mathfrak m$, and the indices of the coordinates actually added by $G_j$. Write $p_a$ for the prime selected at $a\in\mathcal B_{\mathrm{act}}$. Thus transferring an existing pair from $\mathfrak m$ to a list introduces no new index in $\mathcal B_{\mathrm{act}}$. Let $\mathcal C_0$ range over collections of disjoint subsets $C\subseteq\mathcal B$ with $|C|\ge2$, and let $E_C$ mean that $C\subseteq\mathcal B_{\mathrm{act}}$ and $p_a=p_{a'}$ for all $a,\,a'\in C$. Then
	\begin{equation}\label{eq:025}
		\prod_{\substack{a,\,a'\in\mathcal B_{\mathrm{act}}\\a<a'}}\mathbf1_{p_a\ne p_{a'}}=\sum_{\mathcal C_0}\prod_{C\in\mathcal C_0}(-1)^{|C|-1}(|C|-1)!\prod_{C\in\mathcal C_0}\mathbf1_{E_C}.
	\end{equation}
	Indeed, on the indices $a\in\mathcal B_{\mathrm{act}}$ having one common prime value, the right side is the sum of permutation signs: a cycle on $l$ specified indices has $(l-1)!$ orderings and sign $(-1)^{l-1}$. This sum is $1$ for a singleton and $0$ otherwise.
	
	For each collection define its rank by $r(\mathcal C_0)=\sum_{C\in\mathcal C_0}(|C|-1)$. It involves at most $2r(\mathcal C_0)$ indices, and
	\begin{equation}\label{eq:rankdef}
		\sum_{r(\mathcal C_0)=r}\prod_{C\in\mathcal C_0}(|C|-1)!\le |\mathcal B|^{2r}.
	\end{equation}
	To obtain this bound, decompose each counted permutation canonically into $r$ transpositions; the ordered list of pairs of indices determines the permutation. Define
	\begin{equation*}
		\mathcal I(\mathcal C_0)=\bigl\langle b,\,G_0E_0G_1\cdots E_{2R-1}G_{2R}b\bigr\rangle_{\bigcap_{C\in\mathcal C_0}E_C},
	\end{equation*}
	where we retain only terms satisfying all the conditions $E_C$ and $\sum_i k_i\le \lceil L^2\rceil$ after each factor $G_j$ or $E_j$.
	
	For large rank, sum the prime choices in the specified order. In each $C$, leave the first choice unrestricted; each of the other $|C|-1$ choices then has a prescribed value. Let $R_{\mathrm{fix},s}(E_j)$ be the weighted absolute row bound when $s$ specified primes selected by $E_j$, without being taken from $\mathfrak m$, have prescribed values. The count in \eqref{eq:target} retains the corresponding factors $\nu_i(p)\le\Delta$ and gives
	\begin{equation}\label{eq:eatom}
		R_{\mathrm{fix},s}(E_j)\ll_{K,\,\rho}\lceil L^2\rceil^K(1+L^{3A_0})\Delta^s\qquad(0\le s\le K).
	\end{equation}
	For the group-$i$ part $G_{j,i}$ of $G_j$, define $R_{\mathrm{fix},s}(G_{j,i})$ similarly by prescribing $s$ ordered coordinates of the new tuple. The factor from removing or retaining the existing pairs is at most $1$, so
	\begin{equation}\label{eq:gatom}
		R_{\mathrm{fix},s}(G_{j,i})\le\sum_{l\ge s}\frac{(v_0\eta'_jV_i/\vartheta)^l\Delta^s s_i^{l-s}}{l!}\ll_{K,\,\rho}(C_{K,\,\rho}\Delta)^s.
	\end{equation}
	This follows from $1/l!\le1/(l-s)!$; requiring the tuple length to reach a specified index only decreases the sum in \eqref{eq:gatom}. Initial entries have the same factor $\Delta$. These bounds are uniform in all previously selected values, including earlier coordinates of the same tuple.
	
	We omit the condition $z_{2R}=z_0$ and iterate the absolute row bounds, using $b\le w$. We absorb the bounds for the two occurrences of $b$ in $\mathcal I(\mathcal C_0)$ and for all $G_j$ into a fixed enlargement of $C_*$. We obtain
	\begin{equation*}
		|\mathcal I(\mathcal C_0)|\le L^{C_*(2R+1)}(C_{K,\,\rho}\Delta)^{r(\mathcal C_0)}.
	\end{equation*}
	Choose $C_0>2(C_*+E_0+7)$ and put
	\begin{equation}\label{eq:rankcut}
		r_0=\biggl\lceil C_0\frac{2R\log L}{L^{0.1}}\biggr\rceil,\qquad \frac{r_0}{2R}=O\biggl(\frac{\log L}{L^{0.1}}\biggr)=o(1).
	\end{equation}
	Since $C_{K,\,\rho}|\mathcal B|^2\Delta\le e^{-L^{0.1}/2}$ for large $x$, the high-rank contribution satisfies
	\begin{equation}\label{eq:hirank}
		\begin{aligned}
			&\sum_{r(\mathcal C_0)\ge r_0}\prod_{C\in\mathcal C_0}(|C|-1)!\,|\mathcal I(\mathcal C_0)|\le L^{C_*(2R+1)}\sum_{r\ge r_0}(C_{K,\,\rho}|\mathcal B|^2\Delta)^r\ll L^{-2R(E_0+4)}.
		\end{aligned}
	\end{equation}
	
	For $r<r_0$, retain the signed estimate \eqref{eq:esmall} for each $E_j$ unaffected by the equality constraints. Write $C=\{a_0,\,\ldots,\,a_s\}$, with $a_0$ its first index in the specified order. Orthogonality on $\mathbb T$ gives
	\begin{equation}\label{eq:orth}
		\mathbf1_{E_C}=\begin{cases}\displaystyle\int_{\mathbb T^s}e\biggl(\sum_{h=1}^s\theta_h(p_{a_h}-p_{a_0})\biggr)\,\mathrm d\boldsymbol\theta,&C\subseteq\mathcal B_{\mathrm{act}},\\0,&C\not\subseteq\mathcal B_{\mathrm{act}}.\end{cases}
	\end{equation}
	When $C\subseteq\mathcal B_{\mathrm{act}}$, we factor the exponential in \eqref{eq:orth} as follows; we call the individual exponentials on the right-hand side its phase factors:
	\begin{equation*}
		e\biggl(\sum_{h=1}^s\theta_h(p_{a_h}-p_{a_0})\biggr)=e\biggl(-p_{a_0}\sum_{h=1}^s\theta_h\biggr)\prod_{h=1}^s e(\theta_hp_{a_h}).
	\end{equation*}
	Thus each phase factor multiplies only the initial entry or the operator $E_j$ or $G_j$ at which its prime is selected.
	
	For an ordered tuple $\mathbf p=(p_1,\,\ldots,\,p_l)\in\mathcal P_i^l$ appended by $G_j$, we define $\Phi_l(\mathbf p)$ as the product, over all $C\in\mathcal C_0$, of those factors $e(-p_{a_0}\sum_{h=1}^s\theta_h)$ and $e(\theta_hp_{a_h})$ in the preceding factorization whose selection indices refer to coordinates of this tuple. We set $\Phi_l=0$ if any specified coordinate index exceeds $l$. We write $\mathfrak S_l$ for the permutation group on $\{1,\,\ldots,\,l\}$ and replace $\Phi_l$ by
	\begin{equation*}
		\Phi_l^{\mathrm{sym}}(\mathbf p)=\frac1{l!}\sum_{\sigma\in\mathfrak S_l}\Phi_l(p_{\sigma(1)},\,\ldots,\,p_{\sigma(l)}),\qquad |\Phi_l^{\mathrm{sym}}(\mathbf p)|\le1.
	\end{equation*}
	We sum over all ordered tuples $\mathbf p=(p_1,\,\ldots,\,p_l)\in\mathcal P_i^l$, allowing repeated primes and assigning each tuple the weight $\frac1{l!}\prod_{a=1}^l\nu_i(p_a)$. For every symmetric function $F$ on $\mathcal P_i^l$, we have
	\begin{equation*}
		\frac1{l!}\sum_{\mathbf p\in\mathcal P_i^l}\Phi_l^{\mathrm{sym}}(\mathbf p)F(\mathbf p)\prod_{a=1}^l\nu_i(p_a)=\frac1{l!}\sum_{\mathbf p\in\mathcal P_i^l}\Phi_l(\mathbf p)F(\mathbf p)\prod_{a=1}^l\nu_i(p_a).
	\end{equation*}
	
	For an index $a$ associated with $E_j$, we have $\mathbf1_{a\in\mathcal B_{\mathrm{act}}}=1$ precisely when its prime is chosen with weight $\nu_i(p)$ rather than taken from $\mathfrak m_i$. For the $h$th coordinate of a tuple of length $l$ appended by $G_j$, we have $\mathbf1_{a\in\mathcal B_{\mathrm{act}}}=\mathbf1_{h\le l}$. We may permute the existing coordinates of $\mathfrak m_i$ without changing these indicators, and the subsequent operators commute with those permutations. We may therefore replace $\Phi_l$ by $\Phi_l^{\mathrm{sym}}$ without changing $\mathcal I(\mathcal C_0)$ or the bounds in \eqref{eq:crudebd}.
	
	At most $2r$ operators $E_j$ are modified; all the others satisfy \eqref{eq:esmall}. The norms of $G_j$ remain bounded, and the initial phases do not increase $\|b\|$. Consequently,
	\begin{equation*}
		|\mathcal I(\mathcal C_0)|\le C_{K,\,\rho}^{2R+1}\|b\|^2L^{-G(2R-2r)+2C_*r}.
	\end{equation*}
	Summing over the collections of rank $r<r_0$ and using \eqref{eq:rankdef} gives
	\begin{equation}\label{eq:lorank}
		\begin{aligned}
			\sum_{r(\mathcal C_0)<r_0}\prod_{C\in\mathcal C_0}(|C|-1)!\,|\mathcal I(\mathcal C_0)|
			&\le\sum_{r<r_0}|\mathcal B|^{2r}C_{K,\,\rho}^{2R+1}\|b\|^2L^{-G(2R-2r)+2C_*r}
			\\
			&\le L^{-2RG+o(R)}\ll L^{-2R(E_0+4)}.
		\end{aligned}
	\end{equation}
	Here $r_0=o(R)$ and all parameters in $C_*$ are fixed before $x\to\infty$.
	
	Combining \eqref{eq:memtail}, \eqref{eq:hirank}, and \eqref{eq:lorank}, and restoring $\prod_p b'_p\le1$, gives
	\begin{equation*}
		|\mathcal M_R(u,\,v,\,r)|\ll L^{-2R(E_0+4)}
	\end{equation*}
	uniformly in the real root variables. The root-averaging formula of Subsection~\ref{sec:roots}, whose measure \eqref{eq:density} has bounded total mass, now proves \eqref{eq:moment}. The one-edge assertion is precisely the corresponding conclusion of that subsection.
\end{proof}

\vskip 3mm

\subsection{From moments to pairings}\label{sec:ends}

In this subsection, we shall use the moment estimate \eqref{eq:moment} to bound pairings of $\mathcal A_{-2}^{\mathcal C}$ with endpoint vectors representing the coefficients in \eqref{eq:013}.

For a position $P=(P_1,\,P_2)^T$, let $G(P_1)$ be the largest divisor of $P_1$ whose prime factors lie in $\bigcup_i\mathcal P_i$. Define
\begin{equation*}
	f(P)=
	\begin{cases}
		\overline{\alpha_{P_1/G(P_1)}}\,\beta_{P_2}, & \mu^2(G(P_1))=1,\quad \omega_i(P_1)=J+1\quad(1\le i\le K),\\
		0, & \text{otherwise},
	\end{cases}
\end{equation*}
where $\alpha_m$ and $\beta_n$ are zero outside their prescribed supports. The function $f$ is independent of the ordering of the admissible prime lists.

For $P=(Dbm,\,s)^T$ and $Q=(Dar,\,n)^T$ with admissible lists, we have $G(P_1)=Db$ and $G(Q_1)=Da$, so
\begin{equation*}
	\overline{f(P)}f(Q)=\alpha_m\overline{\alpha_r}\beta_n\overline{\beta_s}.
\end{equation*}
Thus the pairing reproduces the coefficients in \eqref{eq:013}, also when the coefficients are complex. Under the coordinate transformation, $f$ becomes
\begin{equation*}
	f'(x,\,y)=\overline{\alpha_{x/G(x)}}\,\beta_{x-2y},
\end{equation*}
with the same support restrictions.

On the support of the coefficients, $R_Z(m)=R_Z(n)=R_Z(r)=R_Z(s)=1$. Since all prime divisors of $Dab$ lie in $(2,\,Z)$, both coordinates of $P=(Dbm,\,s)$ and $Q=(Dar,\,n)$ are odd, and
\begin{equation*}
	G(P_1)=Db,\qquad G(Q_1)=Da,\qquad \rho^{(\omega(P_1)+\omega(Q_1)-2K(J+1))/2}=1.
\end{equation*}

We next verify that $P$ and $Q$ are primitive. Since $(a,\,b)=1$, the sifting conditions give $(b,\,ars)=1$; as $b>1$, this implies $t=bmn-ars\ne0$. If a prime $p$ divides both $Dbm$ and $s$, then
\begin{equation*}
	p>Z,\qquad p\nmid Db,\qquad p\mid(m,\,s),\qquad p\mid t.
\end{equation*}
This contradicts $0<|t|\ll Y=o(Z)$ for sufficiently large $x$. The same argument gives $(Dar,\,n)=1$. Thus restricting to primitive positions removes no terms.

For a fixed ordered full list with product $G$, the number of available positions is $O(d_0YMN/G)$. Summing with the state normalization gives
\begin{equation*}
\begin{aligned}
\bigl(\prod_i V_i^{-(J+1)}\bigr)
 \sum_{\text{ordered admissible lists}}\frac1G\le1.
\end{aligned}
\end{equation*}
Consequently, if $|\alpha|,|\beta|\le L^C$, then
\begin{equation}\label{eq:endnorm}
\begin{aligned}
\|f\|\ll(d_0YMN)^{1/2}L^{2C},\,\qquad
 \sup|f|\le L^{2C}.
\end{aligned}
\end{equation}
The logarithmic exponent is independent of $K,J$. On the domain in \eqref{eq:021}, the unitary map \eqref{eq:utdef} preserves the norm bound \eqref{eq:endnorm}.

Let $g$ be another endpoint vector of the same form as $f$; we will take $g=f$ in the application. Put $\mathcal A=\mathcal A_{-2}^{\mathcal C}$ and $B_0=\mathcal A\mathcal A^*$. To apply \eqref{eq:moment}, partition the coordinate $P_2/P_1$ into half-open intervals of length $1/(d_0YM^2)$. Every edge satisfies
\begin{equation*}
	\biggl|\frac{P_2}{P_1}-\frac{Q_2}{Q_1}\biggr|=\frac{|\det(P,\,Q)|}{P_1Q_1}\ll\frac1{d_0YM^2},
\end{equation*}
so each interval interacts with only $O(1)$ neighboring intervals.

Each interval contains $O(d_0Y)$ primitive positions. Indeed, fix a primitive position $P$ in the interval. For every other position $Q$ in that interval, $\det(P,\,Q)$ is an integer of size $O(d_0Y)$. For a fixed determinant, any two integer solutions $Q,Q'$ satisfy $Q-Q'\in\mathbb ZP$. Since $P_1\ge d_0YM$ and $Q_1\in[d_0YM,\,16d_0YM]$, there are only $O(1)$ solutions for each determinant, proving the assertion.

Let $f_j$ be the restriction of $f$ to the $j$th interval, and let $g_j$ be the restriction of the
other endpoint vector to the intervals that can interact with it. Put
\begin{equation*}
\begin{aligned}
d_j=\sum_{P\text{ in the }j\text{th interval}}\langle u_P,\,B_0^Ru_P\rangle.
\end{aligned}
\end{equation*}
The matrix $(\langle u_P,B_0^Ru_Q\rangle)_{P,Q}$ on the
$j$th interval is positive semidefinite with trace $d_j$.
Since $f_j=\sum_Pf(P)u_P$, its largest eigenvalue is at most $d_j$, giving
\begin{equation}\label{eq:psd}
\begin{aligned}
\langle f_j,\,B_0^Rf_j\rangle
 \le d_j\sum_P|f(P)|^2\ll d_0Y(\sup|f|)^2d_j.
\end{aligned}
\end{equation}
The coefficient-space matrix argument requires no normalization of $u_P$.
Applying H\"older to the spectral decomposition of $B_0$ gives
\begin{equation}\label{eq:sphold}
\begin{aligned}
\|\mathcal A^*f_j\|^2
 =\langle f_j,\,B_0f_j\rangle
 \le \|f_j\|^{2-2/R}
       \langle f_j,\,B_0^Rf_j\rangle^{1/R}.
\end{aligned}
\end{equation}
Combining \eqref{eq:psd} and \eqref{eq:sphold} with Cauchy's inequality gives
\begin{equation}\label{eq:027}
\begin{aligned}
|\langle f_j,\,\mathcal A g_j\rangle|
 \le \|g_j\|\,\|f_j\|^{1-1/R}
       \bigl(Cd_0Y(\sup|f|)^2d_j\bigr)^{1/(2R)}.
\end{aligned}
\end{equation}
The $f_j$ have disjoint supports, and the $g_j$ have bounded overlap. Thus
\begin{equation*}
\begin{aligned}
\sum_j\|f_j\|^2=\|f\|^2,
 \qquad \sum_j\|g_j\|^2\ll\|g\|^2.
\end{aligned}
\end{equation*}
Apply H\"older to the sum of \eqref{eq:027}, with exponents $2$, $2R/(R-1)$, and $2R$. This yields
\begin{equation}\label{eq:028}
\begin{aligned}
|\langle f,\,\mathcal A g\rangle|
 \ll \|g\|\,\|f\|^{1-1/R}
       \bigl(Cd_0Y(\sup|f|)^2\sum_jd_j\bigr)^{1/(2R)}.
\end{aligned}
\end{equation}
Neither the position restrictions nor the endpoint coefficients need to factor into separate
functions of the new coordinates. By \eqref{eq:moment}, $\sum_jd_j\ll d_0YMN
L^{-2RE_0}$. Substitution of \eqref{eq:endnorm} into \eqref{eq:028} therefore gives
\begin{equation*}
\begin{aligned}
|\langle f,\,\mathcal A g\rangle|
 \ll d_0YMN L^{-E_0+4C}(d_0Y)^{1/(2R)}.
\end{aligned}
\end{equation*}
Since $\log(d_0Y)=O_K(L^{0.21})$ and $2R\asymp L^{1/2}$, we have $(d_0Y)^{1/(2R)}=O_K(1)$. Hence
\begin{equation}\label{eq:pair}
\begin{aligned}
|\langle f,\,\mathcal A g\rangle|
 \ll_K d_0YMN L^{-E_0+4C}.
\end{aligned}
\end{equation}
In particular, the loss in the logarithmic exponent is independent of $K$.

\subsection{Removal of auxiliary restrictions}\label{sec:bad}\label{sec:pads}

In this subsection, we shall remove the good-state projections and average over the auxiliary prime lists to deduce Proposition~\ref{prop:D} from \eqref{eq:pair}. Our goal is to show
\begin{equation*}
	|Q_Y-Q_Y^{\mathrm{maj}}|\ll_A XYL^{-A}
\end{equation*}
for every fixed $A>0$.

We first estimate the error from removing the good-state projections. Let $\mathcal G$ denote the transferred good-state projection and put $\mathcal K=\mathcal S\mathcal T_{-2}^{\mathcal C}\mathcal S$. Since $\mathcal A=\mathcal G\mathcal K\mathcal G$,
\begin{equation}\label{eq:badends}
	\begin{aligned}
	\langle f,\,\mathcal K g\rangle-\langle f,\,\mathcal A g\rangle=\langle(1-\mathcal G)f,\,\mathcal K g\rangle+\langle\mathcal G f,\,\mathcal K(1-\mathcal G)g\rangle.
	\end{aligned}
\end{equation}
We shall bound the absolute value of each term on the right-hand side of \eqref{eq:badends} using the one-edge root-replacement assertion of Proposition~\ref{prop:par}.

For fixed ordered prime tuples $\boldsymbol p$, we apply the measure estimate following \eqref{eq:goodtwo} with $r=-2r_P$. Thus the set of ratios $r_P\in\mathbb T$ for which at least one of the two conditions defining $\mathcal G_{-2}$ is not satisfied has measure $O_K(e^{-cL^{0.1}})$. In the one-edge independent-line model, every prime in the union of the two endpoint lists contributes $1/(p+1)$ exactly once.

The two unshared primes in each group are required to be distinct, so this union contains exactly $J+2$ primes from each $\mathcal P_i$. Its total list probability mass is at most
\begin{equation*}
\begin{aligned}
\prod_i\bigl(\sum_{p\in\mathcal P_i}\frac1{V_i(p+1)}\bigr)^{J+2}\le1.
\end{aligned}
\end{equation*}
We count the admissible vectors $z'$ separately for the two factors
\begin{equation*}
	\mathbf1_{t=-2(b-a)},\qquad \int_{\mathfrak M_{-2}}e\bigl(\theta(t+2b-2a)\bigr)\,\mathrm d\theta.
\end{equation*}
We retain the common cutoff $\psi_{-2}(\frac{t}{Y})$, so $|t|\ll Y$. For fixed root variables $(u,\,v,\,r)$, selected primes, and $z$, we obtain $O(1)$ choices of $z'$ for the first factor and $O(Y)$ for the second. We bound the modulus of the integral by $|\mathfrak M_{-2}|\ll L^{3A_0}/Y$, so its product with the second count is $O(L^{3A_0})$. We also use $|f(P)g(Q)|\le\sup|f|\sup|g|$ and $\rho^{\frac{\omega(P_1)+\omega(Q_1)-2K(J+1)}{2}}\le1$.

Apply the preceding counts for the raw and comparison kernels to the first term in \eqref{eq:badends}, and reverse the edge for the second. The one-edge root replacement then gives
\begin{equation}\label{eq:baderr}
	|\langle f,\,\mathcal K g\rangle-\langle f,\,\mathcal A g\rangle|\ll_{A,\,K}d_0YMN\bigl(L^{4C+3A_0}e^{-cL^{0.1}}+L^{-A}\bigr).
\end{equation}

Now take $g=f$. Since $f$ is invariant under list permutations, $\mathcal Sf=f$, and for the auxiliary-product interval $[d_0,\,2d_0)$ we define
\begin{equation*}
	\mathcal B_{d_0}=\langle f,\,\mathcal Kf\rangle=\langle f,\,\mathcal T_{-2}^{\mathcal C}f\rangle.
\end{equation*}
Dividing by $d_0$ changes the kernel factor $d_0/D$ to $1/D$. Since $D=\prod_i\prod_{j=1}^Jp_{i,\,j}$, the list normalization becomes
\begin{equation}\label{eq:padmass}
	\frac1D\prod_iV_i^{-(J+2)}=\biggl(\prod_i\prod_{j=1}^J\frac1{p_{i,\,j}V_i}\biggr)\prod_iV_i^{-2}.
\end{equation}

The first factor assigns weight $1/(pV_i)$ to each ordered auxiliary prime from $\mathcal P_i$, with $\sum_{p\in\mathcal P_i}1/(pV_i)=1$; the second is the normalization in \eqref{eq:013}. Summing over all auxiliary-product intervals counts each ordered auxiliary tuple exactly once, with no additional factor $J!$ or $J+1$. Fix coprime unshared products $a,b$.

Let $w(a,b)$ be the probability that the harmonic draws are
distinct within each $\mathcal P_i$ and avoid the two primes selected separately at the endpoints from that set. Put
\begin{equation*}
\begin{aligned}
\mathcal D(a,\,b)
 &=\eta\Big(\frac{a}{Y}\Big)\eta\Big(\frac{b}{Y}\Big)
   \sum_{m,\,n,\,r,\,s}\alpha_m\overline{\alpha_r}\beta_n\overline{\beta_s}\cdot
   \Bigl\{\mathbf{1}_{bmn-ars=2(a-b)}-H_{-2}(bmn-ars;a,\,b)\Bigr\}.
\end{aligned}
\end{equation*}
The endpoint identifications and \eqref{eq:padmass} give the exact identity
\begin{equation}\label{eq:allpads}
\begin{aligned}
\sum_{d_0}\frac{\mathcal B_{d_0}}{d_0}
 =\bigl(\prod_iV_i^{-2}\bigr)
   \sum_{\substack{a,\,b\\ (a,\,b)=1}}w(a,\,b)\mathcal D(a,\,b).
\end{aligned}
\end{equation}
Here each of $a$ and $b$ is the product of one selected prime from every $\mathcal P_i$. Automatic primitivity and the endpoint support
conditions impose no further restriction on the auxiliary draws. A union bound over pairs of
auxiliary slots and their coincidences with the two selected endpoint primes gives
\begin{equation}\label{eq:030}
\begin{aligned}
0\le1-w(a,\,b)
 \le C_{K,\,\rho}(J+1)^2\max_{i,\,p}(pV_i)^{-1}
 \ll_K (J+1)^2 e^{-cL^{0.1}}.
\end{aligned}
\end{equation}
The probability in \eqref{eq:030} uses all auxiliary-product
intervals. The mass in one interval need not be close to one.

To replace $w(a,b)$ by one in \eqref{eq:allpads}, estimate the raw
and comparison parts absolutely. For the raw part, \eqref{eq:013}
and \eqref{eq:014} give, for some fixed $C_3>0$ depending on
the coefficient bounds,
\begin{equation}\label{eq:031}
\begin{aligned}
Q_Y^{\mathrm{raw,\,abs}}\ll_{K,\,\rho} XYL^{C_3}.
\end{aligned}
\end{equation}
For the comparison part, fix $a,b\asymp Y$ and put $k=mn$, $l=rs$.
The cutoff imposes $|bk-al|\ll Y$.

Since $a,b\asymp Y$, fixed $k$ allows $O(1)$ values of $l$,
and conversely. Apply
$2\tau(k)\tau(l)\le\tau(k)^2+\tau(l)^2$ to obtain
\begin{equation}\label{eq:neigh}
\begin{aligned}
\sum_{\substack{k,\,l\ll X\\|bk-al|\ll Y}}\tau(k)\tau(l)
 \ll\sum_{k\ll X}\tau(k)^2\ll XL^3.
\end{aligned}
\end{equation}
The absolute value of the kernel is at most $CL^{3A_0}/Y$, and the number of available product
pairs is $O(Y^2)$. For a fixed exponent $C_4>0$ depending on
the coefficient bounds, this gives
\begin{equation}\label{eq:032}
\begin{aligned}
Q_Y^{\mathrm{maj,\,abs}}\ll_{K,\,\rho} XYL^{C_4+3A_0}.
\end{aligned}
\end{equation}
Multiplying \eqref{eq:031} and \eqref{eq:032} by the collision
bound \eqref{eq:030} gives $O_A(XYL^{-A})$ for every fixed $A$.

By \eqref{eq:neigh} and \eqref{eq:abcoll}, restoring the terms with $(a,\,b)>1$ in $Q_Y^{\mathrm{maj}}$ introduces an error $O_{K,\,\rho}(XYL^{C_4+3A_0}e^{-cL^{0.1}})=O_{A,\,K,\,\rho}(XYL^{-A})$. For each auxiliary-product interval, use \eqref{eq:baderr} with $A+2$ in place of $A$. Since $MN=X$, equations \eqref{eq:pair} and \eqref{eq:baderr} give
\begin{equation*}
	\frac{|\mathcal B_{d_0}|}{d_0}\ll_{A,\,K,\,\rho}XY\bigl(L^{-E_0+4C}+L^{-A-2}\bigr).
\end{equation*}
Summing over the $O_K(L)$ auxiliary-product intervals yields
\begin{equation*}
	\sum_{d_0}\frac{|\mathcal B_{d_0}|}{d_0}\ll_{A,\,K,\,\rho}XY\bigl(L^{-E_0+4C+1}+L^{-A-1}\bigr).
\end{equation*}

Choose $E_0>A+4C+2$ first, and then choose $A_0,K$. Combining these bounds with
\eqref{eq:allpads}, \eqref{eq:030}, \eqref{eq:031}, and \eqref{eq:032} gives
\begin{equation*}
\begin{aligned}
|Q_Y-Q_Y^{\mathrm{maj}}|\ll_A XYL^{-A},
\end{aligned}
\end{equation*}
which is Proposition~\ref{prop:D}. All exponent choices are made before fixing the exponents $a_i$;
the latter affect only the implied constants and the threshold for $x$.

\vskip 5mm

\section{\large The major arcs and completion of the proof}
\label{sec:major}

We first estimate the major-arc quadratic sum.
And in Subsection~\ref{sec:final}, we will apply this estimate,together with Propositions~\ref{prop:typeI} and~\ref{prop:D} to derive
Theorem~\ref{thm:main}.

Retain $L,Z,M,N,X$, the prime sets $\mathcal P_i$ and their
normalizations $V_i$ from \eqref{eq:prime-sets}. Assume
$M,N\ge x^\delta$ and $MN\asymp x$; each variable may be
restricted to a subinterval of its dyadic interval.
Write
\begin{equation*}
\begin{aligned}
\mathcal A=\{p_1\cdots p_K:\ p_i\in\mathcal P_i\}.
\end{aligned}
\end{equation*}
Disjointness makes this representation unique, and
\begin{equation}\label{eq:aharm}
\begin{aligned}
\biggl(\prod_{i=1}^K V_i^{-1}\biggr)\sum_{a\in\mathcal A}\frac1a=1.
\end{aligned}
\end{equation}
Use the cutoffs and major arcs in \eqref{eq:major-arcs-kernel}.
The arcs have $O(L^{2A_0})$ components, reduced denominators at
most $2L^{A_0}$ and widths $O(L^{A_0}/Y)$.
Thus the expression to estimate is
\begin{equation*}
\begin{aligned}
Q_Y^{\mathrm{maj}}
&=\biggl(\prod_iV_i^{-2}\biggr)
\sum_{\substack{a,b\in\mathcal A\\m,n,r,s}}
\eta(a/Y)\eta(b/Y)\alpha_m\overline{\alpha_r}
\beta_n\overline{\beta_s}H_{-2}(bmn-ars;a,b).
\end{aligned}
\end{equation*}
Here $r$ has the same support as $m$, and $s$ the same support
as $n$. The products $a,b$ are unrestricted by coprimality.

\subsection{Large-value frequencies of a small-prime polynomial}
In this subsection, we shall establish the mean-square estimate \eqref{eq:prodms} and the large-value estimates in Lemma~\ref{lem:sparse}. These estimates will be used to bound $Q_Y^{\mathrm{maj}}$ in Proposition~\ref{prop:maj}.

Suppose that $MN\asymp X$ and $M,N\ge x^{\delta}$, with $\delta>0$ fixed. The coefficient
intervals below may be arbitrary subintervals of the corresponding dyadic intervals. Put
\begin{equation*}
\begin{aligned}
M_\alpha(t)=\frac1M\sum_m\alpha_m\chi_m(m)m^{it},\,\qquad
 N_\beta(t)=\frac1N\sum_n\beta_n\chi_n(n)n^{it},
\end{aligned}
\end{equation*}
where $|\alpha_m|\le1$ and $|\beta_n|\le L^C$. In the application we may take $C=2$; fixed
constants arising from smooth weights may be absorbed into the implied constants. Characters are
extended by zero on nonunits.

The coefficient of $M_\alpha N_\beta$ at $u$ is
$O(L^C\tau(u)/X)$, supported on $u\asymp X$.
Since $\sum_{u\asymp X}\tau(u)^2\ll XL^3$, the mean-value theorem \cite{MV}
gives, on every real interval $I$ of length $T\ge1$,
\begin{equation}\label{eq:prodms}
\begin{aligned}
\int_I|M_\alpha(t)N_\beta(t)|^2\,dt
 \ll L^{2C+3}(1+T/X).
\end{aligned}
\end{equation}
Estimate \eqref{eq:prodms} holds on translated intervals, since translation multiplies the
coefficients by factors of modulus one.

\begin{lemma}\label{lem:logph}
Fix $\delta,C_0>0$, suppose that $N\ge x^\delta$, and put $T_*=\exp(L/(\log L)^2)$. For every
subinterval $J\subset[N,2N]$ and every $T_*\le |u|\le x^{C_0}$, we have
\begin{equation}\label{eq:logph}
\begin{aligned}
\frac1N\bigl|\sum_{n\in J}n^{iu}\bigr|
 \ll_{\delta,\,C_0}\exp\biggl(-c_{\delta,\,C_0}\frac{L}{(\log L)^2}\biggr).
\end{aligned}
\end{equation}
\end{lemma}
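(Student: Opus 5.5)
Lemma~\ref{lem:logph} is an exponential-sum bound for $\sum_{n\in J}n^{iu}=\sum e(f(n))$ with $f(t)=\frac{u}{2\pi}\log t$. The plan is to split into two ranges of $|u|$: a large range handled by Vinogradov's method, and a small range handled by a direct van der Corput estimate.

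\textbf{Large range.} Put $\lambda=\log|u|/\log N$, so $\lambda\le C_0/\delta$. Write $g(t)=\frac{u}{2\pi N}\log(N+t)$ for $0\le t\le N$. The Taylor coefficients are
\begin{equation*}
\frac{g^{(k)}(0)}{k!}=\frac{(-1)^{k-1}u}{2\pi kN^k}.
\end{equation*}
The sizes $|u|/N^k$ pass through the whole range $[N^{-k},1]$ as $k$ grows. Vinogradov's mean value theorem, in the form giving the Korobov--Vinogradov zeta bound $\sum_{n\sim N}n^{-it}\ll N\exp(-c\log^3N/\log^2 t)$ for $N\le t$, then yields
\begin{equation*}
\frac1N\Bigl|\sum_{n\in J}n^{iu}\Bigr|\ll\exp\bigl(-c(\log N)^3/(\log|u|)^2\bigr).
\end{equation*}
A sharp cutoff subinterval $J$ is harmless, since the bound holds for all initial segments. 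For $N\ge x^\delta$ and $|u|\le x^{C_0}$ we have $(\log N)^3/(\log|u|)^2\gg_{\delta,C_0}L$. This gives the much stronger saving $\exp(-cL)$ whenever $|u|\ge N$.

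\textbf{Small range $T_*\le|u|<N$.} Here there is no oscillation problem at all. On $[N,2N]$ we have $f'(t)=u/(2\pi t)$, and the $k$th derivative has size $\lambda_k\asymp|u|/N^k$.
\begin{itemize}
\item If $|u|\le N/10$, then $|f'|\le 1/(20\pi)<1/2$ and $f'$ is monotone. The Kusmin--Landau inequality gives
\begin{equation*}
\sum_{n\in J} n^{iu}=\int_J t^{iu}\,dt+O(1)\ll N/|u|+1.
\end{equation*}
Since $|u|\ge T_*$, this is at most $N\exp(-L/(\log L)^2)$.
\item If $N/10<|u|<N$, the van der Corput second-derivative test gives
\begin{equation*}
\sum_{n\in J} n^{iu}\ll N\lambda_2^{1/2}+\lambda_2^{-1/2}\ll N^{1/2},
\end{equation*}
because $\lambda_2\asymp1/N$.
\end{itemize}

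Combining the two ranges gives
\begin{equation*}
\frac1N\Bigl|\sum_{n\in J}n^{iu}\Bigr|\ll\exp\bigl(-L/(\log L)^2\bigr)+\exp(-c_{\delta,C_0}L)+N^{-1/2}.
\end{equation*}
This is \eqref{eq:logph}, with the exponent $c_{\delta,C_0}$ absorbing the constants.

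\textbf{Main obstacle.} The only delicate point is quoting the Vinogradov--Korobov bound uniformly in the regime $N\le|u|\le x^{C_0}$ with $N$ only a power of $x$. I would cite Titchmarsh, Theorem 6.9, or the version in Iwaniec--Kowalski (Theorem 8.25). I would check that its hypotheses allow $N$ as small as $|u|^{\delta/C_0}$. In that regime the saving $(\log N)^3/\log^2|u|\asymp L$ is even better than needed, so the weaker claimed exponent $L/(\log L)^2$ leaves ample room.
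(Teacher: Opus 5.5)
Your argument is correct, but it takes a different route from the paper in the range $N\le|u|\le x^{C_0}$.

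\textbf{The paper's route.} On all of $T_*\le|u|\le N$ the paper uses only the second-derivative test. This gives $\sqrt{|u|}+N/\sqrt{|u|}$, hence a normalized bound $N^{-1/2}+T_*^{-1/2}$. For $N\le|u|\le x^{C_0}$ it notes that $|u|\le N^{a}$ with $a=C_0/\delta$ bounded. It then fixes an integer $k>a$ and applies the classical van der Corput exponent pair $A^kB(0,1)=\bigl(\frac1{2^{k+2}-2},\,1-\frac{k+1}{2^{k+2}-2}\bigr)$, obtaining the power saving $N^{-(k+2-a)\kappa_k}$. Because $\log|u|/\log N$ stays bounded, a fixed number of $A$-processes suffices, and Vinogradov's mean value theorem is never needed.

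\textbf{Your route, compared.} The Vinogradov--Korobov estimate is heavier machinery than necessary here, but it is legitimate and gives the same quality of result. Indeed $\exp\bigl(-c(\log N)^3/(\log|u|)^2\bigr)\le\exp(-c\delta^3L/C_0^2)=x^{-c\delta^3/C_0^2}$ is itself a power saving, not something stronger than the paper's. Its exponent depends polynomially on $\delta/C_0$, whereas the exponent-pair saving decays exponentially in $C_0/\delta$. This difference is irrelevant for the application. In the small range, your split at $|u|=N/10$ with the Kusmin--Landau (first-derivative) estimate gives $1/T_*$ instead of the paper's $T_*^{-1/2}$. That improvement is harmless but not needed.
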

\begin{proof}
Complex conjugation reduces negative frequencies to positive ones. If $T_*\le u\le N$, the
second-derivative estimate applied to $f(v)=u\log v/(2\pi)$ gives
\begin{equation}\label{eq:secdv}
\begin{aligned}
\bigl|\sum_{n\in J}n^{iu}\bigr|
 \ll \sqrt u+N/\sqrt u.
\end{aligned}
\end{equation}
After normalization, \eqref{eq:secdv} is at most $N^{-1/2}+T_*^{-1/2}$, which gives
\eqref{eq:logph}.

If $N\le u\le x^{C_0}$, set $a=C_0/\delta$, so that $u\le N^a$. Choose a fixed integer $k>a$.
Starting with $B(0,1)=(1/2,1/2)$ and repeatedly applying the classical $A$-process gives the
exponent pair
\begin{equation*}
\begin{aligned}
(\kappa_k,\,\lambda_k)=A^kB(0,\,1)
 =\biggl(\frac1{2^{k+2}-2},
 1-\frac{k+1}{2^{k+2}-2}\biggr).
\end{aligned}
\end{equation*}
The logarithmic phase belongs to the phase class to which this exponent pair applies. Hence
\begin{equation*}
\begin{aligned}
\bigl|\sum_{n\in J}n^{iu}\bigr|
 \ll_k (u/N)^{\kappa_k}N^{\lambda_k}+N/u.
\end{aligned}
\end{equation*}
Consequently,
\begin{equation}\label{eq:logpw}
\begin{aligned}
\frac1N\bigl|\sum_{n\in J}n^{iu}\bigr|
 \ll_k N^{-(k+2-a)\kappa_k}+N^{-1}
 \ll_{\delta,\,C_0}x^{-c'_{\delta,\,C_0}}.
\end{aligned}
\end{equation}
This proves the stronger bound \eqref{eq:logpw}.
For the derivative test and the $A$- and $B$-processes, see
\cite[Theorem 1 and Sections 5.1--5.2]{Robert2016}.

Equation (20) in that source includes $g_0(v)=\log v$ and applies
to any $J\subset[N,2N]$. The integer $k$ is fixed throughout;
it does not grow with $X$.
\end{proof}

\begin{lemma}\label{lem:sparse}
Fix positive constants $c$ and $C_1$. Suppose that
$cL^{0.1}\le\log P\le C_1L^{0.2}$, let $\mathcal P\subset[P,\,2P)$ be any set of
primes, let $\xi\in\mathbb R$, and put
\begin{equation*}
\begin{aligned}
P_s(t)=P^{-1}\sum_{p\in\mathcal P}v_p\chi(p)p^{i(t+\xi)},
 \qquad |v_p|\le1.
\end{aligned}
\end{equation*}
Fix $B,A_s>0$. Let $\mathcal I$ be the family of unit intervals with integer endpoints that meet
the set
\begin{equation*}
\begin{aligned}
\{t:|t|\le2XL^B,\,\ |P_s(t)|>L^{-A_s}\}
\end{aligned}
\end{equation*}
Then
\begin{equation}\label{eq:sparse}
\begin{aligned}
|\mathcal I|\le\exp(O(L^{0.91})),\,\qquad
 \sum_{I\in\mathcal I}\sup_{t\in I}|N_\beta(t)|^2\ll L^{2C+1}.
\end{aligned}
\end{equation}
The implied constants may depend on the fixed parameters, and the estimates are uniform in $\xi$,
$\mathcal P$, and all coefficients satisfying $|v_p|\le1$.
\end{lemma}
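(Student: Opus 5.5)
The count $|\mathcal I|$ will come from a high moment of $P_s$, and the second estimate from a mean value theorem for $N_\beta$ combined with a local Sobolev-type bound.

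\textbf{Bound for $|\mathcal I|$.} Choose an integer $k$ with $k\log P\asymp L^{0.9}$, say $k=\lceil L^{0.9}/\log P\rceil$. Then $P^{k}\le\exp(O(L^{0.9}))$, and $P^k\ge x^{o(1)}$ is still small relative to $X$.

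The power $P_s(t)^k$ is a Dirichlet polynomial
\[
P^{-k}\sum_{n\in[P^k,2^kP^k)}c_n\chi(n)n^{i(t+\xi)},
\]
where $|c_n|\le k!$ and $c_n$ is supported on products of $k$ primes from $\mathcal P$. Each such $n$ has at most $k!$ representations, so $\sum|c_n|^2\le k!\,(\#\mathcal P)^k\le k!\,(2P)^k$.

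By the Montgomery--Vaughan mean value theorem \cite{MV} on $[-2XL^B-1,\,2XL^B+1]$,
\[
\int|P_s|^{2k}\ll P^{-2k}\bigl(XL^B+2^kP^k\bigr)k!\,(2P)^k\ll XL^B\,k!\,2^kP^{-k}+4^k k!.
\]
The first term is negligible, since $P^{-k}k!\le (k/P)^k$ and $\log P\gg L^{0.1}$.

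To convert this into a count, pass from $\sup_I|P_s|^{2k}$ to an integral using the standard Gallagher/Sobolev inequality
\[
\sup_I|F|^2\ll\int_{I^*}\bigl(|F|^2+|FF'|\bigr).
\]
Apply it with $F=P_s^k$, noting that $|P_s'|\ll\log P\cdot\sup|P_s|$, so that $|F'|\ll k\log P\,|P_s|^{k-1}$. Counting the intervals where $|P_s|>L^{-A_s}$ then gives
\[
|\mathcal I|\ll L^{2kA_s}\,k\log P\,(4^kk!+o(1))\le\exp\bigl(O(k\log L+k\log k)\bigr)=\exp(O(L^{0.9}\log L))\le\exp(O(L^{0.91})).
\]
A dyadic-level trick is not needed here.

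\textbf{Second estimate.} For each unit interval $I$, apply Gallagher's lemma to $N_\beta$:
\[
\sup_I|N_\beta|^2\ll\int_{I^*}|N_\beta|^2+\int_{I^*}|N_\beta N_\beta'|.
\]
Here $N_\beta'$ is a polynomial with coefficients multiplied by $i\log n$, so Cauchy--Schwarz bounds the second term by $L\int_{I^*}|N_\beta|^2$ plus a comparable term. The enlarged intervals $I^*$ overlap boundedly.

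Summing over $\mathcal I$, it suffices to bound $\int_{\bigcup I^*}|N_\beta|^2$. The trivial route, the mean value theorem over the full range $|t|\le 2XL^B$, gives $L^{2C}(1+XL^B/N)/N\cdot N$, which is too large unless $N\ge XL^B$. The union, however, has measure only $\exp(O(L^{0.91}))$. For this I will use a large-sieve estimate for well-spaced points, namely the Montgomery--Halász inequality in its discrete form:
\[
\sum_{t_r}|N_\beta(t_r)|^2\ll N^{-2}(N+|\mathcal I|\,T^{1/2}L)\sum|\beta_n|^2,
\]
with $T\le 2XL^B$.

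\textbf{Main obstacle.} The Halász term $|\mathcal I|X^{1/2}/N$ is small only when $N\ge X^{1/2+\epsilon}$, so the case of small $N$ is where I expect the real difficulty. In that case I would use that $N\ge x^\delta$, and instead apply the Huxley large-values estimate or a mean value theorem for $N_\beta^{\,\ell}$ with $\ell=\lceil 1/\delta\rceil$, for which the length $N^\ell\ge X$. Hölder's inequality would then combine the count $|\mathcal I|$ with $\int|N_\beta|^{2\ell}\ll XL^{O(1)}/N^{\ell}\cdot N^{\ell}$ normalized.

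I am not certain that this reaches exactly $L^{2C+1}$. If it does not, I would exploit the unit-interval spacing to apply the mean value theorem to $N_\beta$ restricted to each $I^*$ directly, which gives $\ll L^{2C}(1+1/N)N/N$ per interval. Summing this over $|\mathcal I|$ intervals loses the factor $|\mathcal I|$, so I would need to check how that bound is used downstream.
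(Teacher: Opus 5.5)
There are two gaps: a parameter error in the first estimate, and a missing idea in the second.

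\textbf{First estimate: wrong choice of $k$.} Your choice $k\log P\asymp L^{0.9}$ does not work. Take the $t$-range of length $T\asymp XL^B$. The diagonal term of the mean value theorem is then $XL^B\,k!\,2^kP^{-k}\approx\exp\bigl(L-cL^{0.9}+O(k\log k)\bigr)=x^{1-o(1)}$. This is not negligible, so the argument gives no useful bound for $|\mathcal I|$.

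The length of $P_s^k$ has to match the length of the $t$-range. Take $k=\lfloor\log(4XL^B)/\log(2P)\rfloor$. Then $(2P)^k\le 4XL^B<(2P)^{k+1}$, and hence $XL^B(2/P)^k\ll 4^kP$. Also $k\ll L/\log P\ll L^{0.9}$; this is where the $0.9$ comes from, via $\log P\ge cL^{0.1}$. Now pick a point with $|P_s|>L^{-A_s}$ in each interval of a $1$-separated class, and apply your Sobolev bound together with the mean value theorem to $F=P_s^k$ and $F'$. This gives
\[
|\mathcal I|\ll L^{2A_sk+O(1)}\,k!\,4^kP=\exp(O(L^{0.91})).
\]

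\textbf{Second estimate: missing idea.} You correctly see that the Hal\'asz--Montgomery term $|\mathcal I|T^{1/2}/N$ is fatal when $N<X^{1/2}$. However, none of your fallbacks can reach $L^{2C+1}$:
\begin{itemize}
\item Hölder with the $2\ell$-th moment of $N_\beta$ leaves a factor $|\mathcal I|^{1-1/\ell}=\exp(O(L^{0.91}))$.
\item The per-interval mean value theorem leaves a factor $|\mathcal I|$.
\item Huxley's estimate still carries a term of size roughly $T/N^2$.
\end{itemize}

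The missing point is that in the duality (Gram matrix) argument, the off-diagonal entries $G_{I,J}=\sum_{N\le n\le 2N}n^{iu}$ with $u=t_I-t_J$ are far smaller than $T^{1/2}$, because $N\ge x^\delta$ and $|u|\le x^{O(1)}$. This is Lemma~\ref{lem:logph}. Put $T_*=\exp(L/(\log L)^2)$.
\begin{itemize}
\item For $1\le|u|\le T_*$, comparison with an integral gives $|G_{I,J}|\ll N(|u|^{-1}+x^{-c_\delta})$.
\item For $T_*\le|u|\le N$, the second-derivative test gives $|G_{I,J}|\ll N\exp(-cL/(\log L)^2)$.
\item For $N\le|u|\le x^{C_0}$, where $|u|\le N^{C_0/\delta}$, the exponent pair $A^kB(0,1)$ with a fixed $k>C_0/\delta$ gives the same bound.
\end{itemize}

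Since $|\mathcal I|=\exp(O(L^{0.91}))$ is beaten by both $x^{c_\delta}$ and $\exp(cL/(\log L)^2)$, the argument closes as follows. Choose $t_I$ where $|N_\beta|$ attains its supremum on $\overline I$, and split the intervals into three $1$-separated classes. The row sums then satisfy
\[
\max_I\sum_J|G_{I,J}|\ll N\bigl(1+\log(2+|\mathcal I|)\bigr)+o(N)\ll NL.
\]
The Schur test then yields $\sum_I\sup_I|N_\beta|^2\ll NL\sum_n|\beta_n/N|^2\ll L^{2C+1}$.

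So the first estimate is not a side result. The fact that $|\mathcal I|$ is only $\exp(O(L^{0.91}))$, with $L^{0.91}=o(L/(\log L)^2)$, is exactly what lets a sub-power saving in these zeta sums make the Gram matrix essentially diagonal.
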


\begin{proof}
Take $T_0=4XL^B$ and $j=\lfloor\log T_0/\log(2P)\rfloor$, and expand
$P_s(t)^j=\sum_{u\le(2P)^j}c_u u^{it}$. Unique factorization and $|\mathcal P|\le 2P$ give
\begin{equation}\label{eq:pscoeff}
\begin{aligned}
\sum_u|c_u|^2\le j!P^{-2j}|\mathcal P|^j
 \le j!(2/P)^j.
\end{aligned}
\end{equation}
The order $j$ grows with $X$, so the factor $j!$ in
\eqref{eq:pscoeff} must be retained.

Choose a point $t_r$ with $|P_s(t_r)|>L^{-A_s}$ in each interval of $\mathcal I$. Partition the intervals into three classes according to their integer left endpoints modulo three, and fix one class. Put $F(t)=P_s(t)^j$ and $N_0=(2P)^j\le T_0$. The chosen points in this class satisfy $|t_r-t_s|\ge1$ for $r\ne s$. The one-dimensional Sobolev estimate gives
\begin{equation}\label{eq:sobolev}
\begin{aligned}
|F(t_r)|^2\ll\int_{t_r-1/2}^{t_r+1/2}
       \bigl(|F(u)|^2+|F(u)F'(u)|\bigr)\,\mathrm{d} u.
\end{aligned}
\end{equation}
Integrating the derivative of $|F|^2$ on a unit interval, with
$|(|F|^2)'|\le2|FF'|$, proves \eqref{eq:sobolev}.
The intervals centered at $t_r$ have disjoint interiors.

Apply the mean-value theorem to $F$ and to
$F'(t)=\sum_{u\le N_0}(i\log u)c_u u^{it}$,
and then use Cauchy's inequality. This gives
\begin{equation*}
\begin{aligned}
\sum_r|F(t_r)|^2\ll(T_0+N_0)\log(2N_0)\sum_u|c_u|^2.
\end{aligned}
\end{equation*}
Differentiation introduces only $\log N_0\ll L$, with no additional loss from the growing order.
It follows that
\begin{equation*}
\begin{aligned}
|\mathcal I|\ll T_0L^{O(1)}L^{2A_sj}j!(2/P)^j.
\end{aligned}
\end{equation*}
Since $j\ll L^{0.9}$ and $\log T_0-j\log P\ll\log P+j$, the logarithm of the right-hand side is at
most $O(L^{0.9}\log L+L^{0.2})=O(L^{0.91})$. The factors $p^{i\xi}$ do not change the absolute
values of the coefficients, so the estimate is uniform in all $\xi$.

We now prove the energy assertion in \eqref{eq:sparse}. For each interval choose a point in its
closure at which $|N_\beta|$ attains its supremum, and again divide the intervals into three
classes according to their left endpoints modulo three.

For one class, the evaluation map $(a_n)\mapsto
 (\sum_n a_n n^{it_I})_I$ has Gram matrix
\begin{equation}\label{eq:gram}
\begin{aligned}
G_{I,\,J}=\sum_{N\le n\le2N}n^{i(t_I-t_J)}.
\end{aligned}
\end{equation}
The diagonal entries in \eqref{eq:gram} are $O(N)$. With $T_*$ as in Lemma~\ref{lem:logph}, when
$1\le|u|\le T_*$, comparison of the sum with an integral gives
\begin{equation}\label{eq:lowfreq}
\begin{aligned}
N^{-1}\bigl|\sum_{N\le n\le2N}n^{iu}\bigr|
 \ll |u|^{-1}+x^{-c_\delta}.
\end{aligned}
\end{equation}
Indeed, the integral is $O(N/|u|)$, the discretization error is $O(1+|u|)$, and $T_*/N\ll
x^{-c_\delta}$. For $T_*<|u|\le4XL^B+2$, Lemma~\ref{lem:logph} gives \eqref{eq:logph} for the
integer phase sum.

Within the fixed class, the separation of the points gives $\sum_{J\ne I}|t_I-t_J|^{-1}\ll\log(2+|\mathcal I|)\ll L$ for every $I$. Moreover,
\begin{equation}\label{eq:gramerr}
	|\mathcal I|x^{-c_\delta}=o(1),\qquad |\mathcal I|\exp\bigl(-cL/(\log L)^2\bigr)=o(1).
\end{equation}
Together with \eqref{eq:lowfreq} and \eqref{eq:logph}, these estimates give $\max_I\sum_J|G_{I,\,J}|\ll NL$. Since the Gram matrix is Hermitian, the Schur test \eqref{eq:schur} yields
\begin{equation*}
	\sum_I|N_\beta(t_I)|^2\ll NL\sum_n\biggl|\frac{\beta_n\chi_n(n)}N\biggr|^2\ll L^{2C+1}.
\end{equation*}
Here $|N_\beta(t_I)|=\sup_{t\in I}|N_\beta(t)|$ by the choice of $t_I$. Summing over the three classes, we get the second estimate in \eqref{eq:sparse}.
\end{proof}

\subsection{Normalization and energy on the major arcs}

In this subsection, we shall prove the following Proposition~\ref{prop:maj} by combining the estimates of the preceding subsection with the character--Mellin estimate in Proposition~\ref{prop:mel}.
\begin{proposition}\label{prop:maj}
Let $\alpha_m=\mu(m)R_Z(m)$, let $|\beta_n|\le L^C$, and suppose that $\beta_n$ is supported on
$Z$-sifted integers. For the sets $\mathcal P_i$ in \eqref{eq:prime-sets} and $M,N\ge x^\delta$, we have
\begin{equation}\label{eq:majbd}
\begin{aligned}
|Q_Y^{\mathrm{maj}}|\ll_D XYL^{-D}
\end{aligned}
\end{equation}
for every fixed $D>0$. The two coefficient sequences may also carry the required Mellin factors,
interval restrictions, or fixed smooth cutoffs with bounded supremum norm and total variation.
\end{proposition}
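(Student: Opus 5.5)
We expand the major-arc kernel $H_{-2}$ and reduce $Q_Y^{\mathrm{maj}}$ to a sum over major-arc rationals. On $\mathfrak M_{-2}$ we write $\theta$ as $-2\theta\equiv h/k+\beta$ with $k\le L^{A_0}$ and $|\beta|\le 2L^{A_0}/Y$. The phase $e(\theta(bmn-ars+2b-2a))$ then splits into an additive character $e(h\,\cdot/k)$ (up to the doubling ambiguity, which only multiplies the modulus by $2$) and a slowly varying factor $e(\beta\,\cdot)$. We expand the additive characters modulo $k$ into Dirichlet characters modulo divisors of $k$, after separating the common factors $(mn,k)$ and $(rs,k)$; these have only $L^{O(A_0)}$ possibilities.

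We then remove the cutoff $\psi_{-2}(t/Y)$ and the factor $e(\beta t)$ by Mellin inversion in $mn$ and $rs$ separately. The needed variable is the ratio, since $bmn-ars$ small forces $mn/rs\approx a/b$. We write the smooth function of $bmn-ars$ as an integral over $u$ of $(mn)^{iu}(rs)^{-iu}$ times smooth functions of $a,b$. The truncation $|u|\le XL^{B}$ costs $O(XYL^{-D})$.

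This expresses $Q_Y^{\mathrm{maj}}$ as $L^{O(A_0)}$ integrals of the form
\[
\frac{XY}{X}\int_{|u|\le 2XL^B}\bigl|M_\alpha(u)N_\beta(u)\bigr|^2\,\bigl|P_{s}(u)\bigr|^{2}\,w(u)\,du ,
\]
up to the factor $\prod_i V_i^{-2}$. Here $P_s$ is the product over $i$ of the normalized prime polynomials over $\mathcal P_i$, twisted by $\chi$ and the shift $\xi$ coming from the $a,b$ cutoffs, and $w$ is a bounded weight of size $L^{O(1)}$ in $L^1$ on unit intervals.

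We split the $u$-range according to whether some factor satisfies $|P_s(u)|>L^{-A_s}$ or not.

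\emph{Small-value range.} If every $|P_s(u)|\le L^{-A_s}$, the product is at most $L^{-2A_s}$. The mean-value bound \eqref{eq:prodms} then gives a contribution $L^{2C+3-2A_s+O(A_0)}XY$. This is admissible once $A_s$ is large.

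\emph{Large-value range.} Lemma~\ref{lem:sparse} bounds the number of unit intervals $\mathcal I$ by $\exp(O(L^{0.91}))$, and gives $\sum_{I\in\mathcal I}\sup_I|N_\beta|^2\ll L^{2C+1}$. It therefore suffices to show $\sup_{|u|\le 2XL^B}|M_\alpha(u)|\ll L^{-A}$ for every $A$. This follows from Proposition~\ref{prop:mel}, in the form \eqref{eq:mubv} to absorb the cutoffs and interval restrictions, applied with $D=M\ge x^\delta$, characters of modulus $\le L^{O(A_0)}$, and $|t|\le x^{C}$. Inserting this bound and using $|P_s|\le 1$ gives the large-value contribution $\ll XY\,L^{2C+1-2A+O(A_0)}$. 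Taking $A$ large relative to $D$, $A_0$ and $C$ yields \eqref{eq:majbd}.

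\emph{Main obstacle.} The delicate step is the first reduction: bringing the major-arc kernel, with its coupled variables $a,b,mn,rs$, into the product form $|M_\alpha N_\beta P_s|^2$ with weights of only polylogarithmic size. The points requiring care are the factor $-2$ in $\mathfrak M_{-2}$, the non-coprime parts of $mn,rs$ modulo $k$, and the fact that the pairs $(a,b)$ with $(a,b)>1$ are included. Coprimality of $a,b$ with $k$ is automatic, since their primes exceed $L^{A_0}$, and I expect the shared-prime pairs can be handled by the bound \eqref{eq:abcoll} as in Lemma~\ref{lem:shared}.
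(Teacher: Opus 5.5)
Your proposal is correct and follows essentially the same route as the paper. The shared steps are: expanding the major-arc phase into multiplicative characters; Mellin separation of the kernel with frequency truncation; reduction to the energy $\int|P_sM_\alpha N_\beta|^2$ (by Cauchy--Schwarz, and Minkowski over the shift $\xi$ from the cutoff $\eta(b/Y)e(2\lambda b/Y)$); and the split into small values of $P_s$, handled by \eqref{eq:prodms}, and large values, handled by Lemma~\ref{lem:sparse} with the pointwise bound of Proposition~\ref{prop:mel}. Two of the difficulties you flag are moot, since $m,n,r,s$ are $Z$-sifted and hence already units modulo $k\le 2L^{A_0}$, and including the pairs with $(a,b)>1$ in $Q_Y^{\mathrm{maj}}$ is exactly what lets the $a$- and $b$-sums factor (no appeal to \eqref{eq:abcoll} is needed); what does need care is that the kernel is not a function of $a,b$ and $mn/rs$ alone, so the separation must be genuinely two-variable, as in the paper's $F_\lambda(t/X,s)$ with the short second frequency $|s|\le L^{B_1}$.
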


\begin{proof}
Write a major arc as $\theta=h/k+\lambda/Y$, where $k\le2L^{A_0}$, $|\lambda|\ll L^{A_0}$, and
$\,\mathrm{d}\theta=\,\mathrm{d}\lambda/Y$. Every prime in $\bigcup_i\mathcal P_i$ and every sifted variable is a unit
modulo $k$. Expand the function
\begin{equation*}
\begin{aligned}
(a,\,b,\,m,\,n,\,r,\,s)\longmapsto
 e\bigl(h(bmn-ars+2b-2a)/k\bigr)
\end{aligned}
\end{equation*}
exactly in multiplicative characters on the six unit groups. Each Fourier coefficient has absolute
value at most one, and there are $\varphi(k)^6$ terms, so their total absolute coefficient mass is
at most $k^6$. Fix one term of this expansion for the remainder of the argument.

Put $w=bmn/(YX)$ and $w'=ars/(YX)$.
Their ranges lie in a fixed compact subinterval of $(0,\infty)$.
Choose $\vartheta\in C_c^\infty((0,\infty))$ equal to one there.

After removing the factors $e(2\lambda b/Y)e(-2\lambda a/Y)$,
the remaining kernel is
\begin{equation*}
\begin{aligned}
\psi_{-2}(X(w-w'))e(\lambda X(w-w')).
\end{aligned}
\end{equation*}

In coordinates $z=\log w'$ and $\zeta=X\log(w/w')$, define
\begin{equation*}
\begin{aligned}
 K_\lambda(z,\,\zeta)={}&\vartheta(e^z)\vartheta(e^{z+\zeta/X})\,
 \psi_{-2}\bigl(Xe^z(e^{\zeta/X}-1)\bigr)\\
 &\hspace{9mm}\cdot
 e\bigl(\lambda Xe^z(e^{\zeta/X}-1)\bigr).
\end{aligned}
\end{equation*}
Since $\psi_{-2}$ is compactly supported, the supports of $K_\lambda$ in both $z$ and $\zeta$ lie
in fixed compact sets independent of $X$. For all nonnegative integers $j_1,j_2$,
\begin{equation*}
\begin{aligned}
\|\partial_z^{j_1}\partial_\zeta^{j_2}K_\lambda\|_\infty
 \ll_{j_1,\,j_2}L^{A_0(j_1+j_2)}.
\end{aligned}
\end{equation*}
Using the Fourier convention with $\exp(iu)$, define
\begin{equation*}
\begin{aligned}
F_\lambda(v,\,s)=\frac1{(2\pi)^2}
     \iint K_\lambda(z,\,\zeta)e^{-i(v\zeta+sz)}\,\mathrm{d} z\,\mathrm{d}\zeta.
\end{aligned}
\end{equation*}
Fourier inversion and the substitution $t=Xv$ give the exact separation
\begin{equation*}
\begin{aligned}
\psi_{-2}(X(w-w'))e(\lambda X(w-w'))
 =\frac1X\iint_{\mathbb R^2}
 F_\lambda(t/X,\,s)w^{it}(w')^{i(s-t)}\,\mathrm{d} t\,\mathrm{d} s
\end{aligned}
\end{equation*}
throughout the actual range of the variables. Integration by parts gives, for every fixed $j$,
\begin{equation}\label{eq:decay}
\begin{aligned}
|F_\lambda(v,\,s)|\ll_j
 (1+|v|/L^{B_0})^{-j}(1+|s|/L^{B_0})^{-j},
\end{aligned}
\end{equation}
where $B_0$ depends only on the major-arc width and the fixed cutoffs.

Define the polynomial over products of selected primes
\begin{equation*}
\begin{aligned}
B_{\lambda}(t)=\frac{\prod_i V_i^{-1}}Y\sum_{b\in\mathcal A}\eta(b/Y)e(2\lambda b/Y)
 \chi_b(b)b^{it},
 \qquad Q(t)=B_{\lambda}(t)M_\alpha(t)N_\beta(t).
\end{aligned}
\end{equation*}
For the characters $\chi_a,\,\chi_r,\,\chi_s$ associated with the variables $a,\,r,\,s$, respectively, define
\begin{equation*}
	Q'(u)=\frac{\prod_{i=1}^K V_i^{-1}}{YMN}\sum_{\substack{a\in\mathcal A\\r,\,\ell\ge1}}\eta\biggl(\frac aY\biggr)e\biggl(-\frac{2\lambda a}{Y}\biggr)\chi_a(a)\overline{\alpha_r}\chi_r(r)\overline{\beta_\ell}\chi_s(\ell)(ar\ell)^{iu}\qquad(u\in\mathbb R).
\end{equation*}
Here $\ell$ denotes the original integer variable $s$, to distinguish it from the Fourier variable $s$, and $r,\,\ell$ are restricted to the supports of $\alpha,\,\beta$, respectively. The estimates below also apply to $Q'$, with $-\lambda,\,\overline{\alpha},\,\overline{\beta}$ in place of $\lambda,\,\alpha,\,\beta$. By \eqref{eq:aharm} and $b\asymp Y$, we have
$|B_{\lambda}(t)|\ll1$. Each unnormalized endpoint is $YMN$ times the corresponding polynomial.

The substitutions for $w,w'$ also contribute the common factor $(YX)^{-is}$, whose absolute value
is one. Thus the combined scale of the two endpoints, the Mellin kernel, and the arc measure is
\begin{equation*}
\begin{aligned}
(YMN)^2\cdot X^{-1}\cdot Y^{-1}
 =XY(MN/X)^2\asymp XY.
\end{aligned}
\end{equation*}
After division by $XY$, it remains to bound the integral of $F_{\lambda}(t/X,s)Q(t)Q'(s-t)$.

To bound the tails uniformly, for
every interval $J$ of length $U\ge1$ and every real $s$, Cauchy's inequality and \eqref{eq:prodms}
give
\begin{equation*}
\begin{aligned}
\int_J|Q(t)Q'(s-t)|\,\mathrm{d} t\ll L^{2C+3}(1+U/X).
\end{aligned}
\end{equation*}
The estimate is uniform in $s$ because the mean-value bound holds on translated intervals. For any
fixed integer $j>2$, a dyadic decomposition in $t$ now gives
\begin{equation*}
\begin{aligned}
\int_{\mathbb R}(1+|t|/(XL^{B_0}))^{-j}|Q(t)Q'(s-t)|\,\mathrm{d} t
 \ll_j L^{2C+3+B_0}.
\end{aligned}
\end{equation*}
For $T\ge XL^{B_0}$, the same dyadic argument bounds the $t$-tail by
$O_j(L^{2C+3+B_0}(T/(XL^{B_0}))^{1-j})$.

The $s$-weight has integral $O_j(L^{B_0})$, and for $S\ge L^{B_0}$,
\begin{equation*}
\begin{aligned}
\int_{|s|>S}(1+|s|/L^{B_0})^{-j}\,\mathrm{d}s
\ll_j L^{B_0}(S/L^{B_0})^{1-j}.
\end{aligned}
\end{equation*}
Choose $B_1=B_0+2$, independently of the requested logarithmic
precision.

By \eqref{eq:decay}, the contribution outside $|s|\le L^{B_1}$, $|t|\le XL^{B_1}$ is at most
\begin{equation*}
\begin{aligned}
O_j\bigl(L^{2C+3+2B_0-(j-1)(B_1-B_0)}\bigr).
\end{aligned}
\end{equation*}
Increasing the fixed integer $j$ makes this smaller than any prescribed power of $L^{-1}$.

On the retained range we have $|s-t|\le2XL^{B_1}$. Cauchy's inequality in $t$, followed by
integration over $s$, costs at most a further fixed factor $L^{B_1}$. It is therefore enough to
prove
\begin{equation}\label{eq:energy}
\begin{aligned}
\int_{|t|\le2XL^{B_1}}
 |B_{\lambda}(t)M_\alpha(t)N_\beta(t)|^2\,dt
 \ll_A L^{-A}
\end{aligned}
\end{equation}
for every fixed $A$.

The range in \eqref{eq:energy} has length $T\asymp XL^{B_1}$.
A pointwise bound for $M_\alpha$ followed by the ordinary mean-value
bound for $N_\beta$ would leave $T/N\asymp ML^{B_1}$.

Instead, the primes selected from the sets $\mathcal P_i$ restrict the exceptional frequencies.
The sparse estimate \eqref{eq:sparse} controls $N_\beta$ on that set.

We now separate one prime selected from $\mathcal P_1$ exactly. Put
\begin{equation*}
\begin{aligned}
g_\lambda(y)=y\eta(y)e(2\lambda y),\,\qquad
 \widehat g_\lambda(\xi)=\frac1{2\pi}
     \int_0^\infty g_\lambda(y)y^{-i\xi}\,\frac{\,\mathrm{d} y}{y}.
\end{aligned}
\end{equation*}
Then $g_\lambda(y)=\int\widehat g_\lambda(\xi)y^{i\xi}\,\mathrm{d}\xi$, and integration by parts a
fixed number of times gives $\|\widehat g_\lambda\|_1\ll L^{B_2}$ for a fixed $B_2$. Since
\begin{equation*}
\begin{aligned}
B_\lambda(t)=\biggl(\prod_i V_i^{-1}\biggr)\sum_{b\in\mathcal A}\frac1b
                     g_\lambda(b/Y)\chi_b(b)b^{it},
\end{aligned}
\end{equation*}
extracting $p\in\mathcal P_1$ and writing $b=pb'$ gives the exact formula
\begin{equation}\label{eq:bfactor}
\begin{aligned}
B_\lambda(t)=\frac1{V_1}\int_{\mathbb R}
 \widehat g_\lambda(\xi)Y^{-i\xi}
 \biggl(\sum_{p\in\mathcal P_1}\frac{\chi_b(p)p^{i(t+\xi)}}p\biggr)
 H(t+\xi)\,\mathrm{d}\xi,
\end{aligned}
\end{equation}
where
\begin{equation*}
\begin{aligned}
H(u)=\prod_{i=2}^K\biggl(\frac1{V_i}
                  \sum_{p\in\mathcal P_i}\frac{\chi_b(p)p^{iu}}p\biggr),
 \qquad |H(u)|\le1.
\end{aligned}
\end{equation*}
When $K=1$, the empty product is interpreted as one. Partition $\mathcal P_1$ according to $O(L)$ ordinary dyadic intervals $[P,\,2P)$.  On each such interval the prime polynomial is
\begin{equation*}
\begin{aligned}
\sum_{p\in\mathcal P_1\cap[P,\,2P)}\frac{\chi_b(p)p^{i(t+\xi)}}p
 =\frac1P\sum_{p\in\mathcal P_1\cap[P,\,2P)}
           \frac Pp\chi_b(p)p^{i(t+\xi)}.
\end{aligned}
\end{equation*}
The coefficients $P/p$ satisfy the hypothesis of Lemma~\ref{lem:sparse}. Minkowski's inequality
applied to \eqref{eq:bfactor} reduces \eqref{eq:energy} to
\begin{equation}\label{eq:penergy}
\begin{aligned}
\int_{|t|\le2XL^{B_1}}
 |P_s(t)M_\alpha(t)N_\beta(t)|^2\,\mathrm{d} t\ll_A L^{-A}
\end{aligned}
\end{equation}
uniformly for all $\xi\in\mathbb R$. Increasing the requested saving by a fixed amount absorbs the
losses from the $O(L)$ dyadic blocks, $V_1^{-1}=O(1)$, and $\|\widehat g_\lambda\|_1\ll L^{B_2}$.

Apply Lemma~\ref{lem:sparse} with $B=B_1$, and let $\mathcal I$ be the resulting family of unit intervals. On the set where $|P_s(t)|\le L^{-A_s}$, \eqref{eq:prodms} gives
\begin{equation*}
	\int_{\substack{|t|\le2XL^{B_1}\\|P_s(t)|\le L^{-A_s}}}|P_s(t)M_\alpha(t)N_\beta(t)|^2\,\mathrm dt\ll L^{-2A_s+2C+3+B_1}.
\end{equation*}
On the complementary set, use $|P_s(t)|\ll1$ and the bound $|M_\alpha(t)|\ll_{A'}L^{-A'}$ from Proposition~\ref{prop:mel}. The second estimate in \eqref{eq:sparse} then gives
\begin{equation*}
	\begin{aligned}
		\int_{\substack{|t|\le2XL^{B_1}\\|P_s(t)|>L^{-A_s}}}|P_s(t)M_\alpha(t)N_\beta(t)|^2\,\mathrm dt&\ll L^{-2A'}\sum_{I\in\mathcal I}\sup_{t\in I}|N_\beta(t)|^2\ll L^{-2A'+2C+1}.
	\end{aligned}
\end{equation*}
Thus \eqref{eq:penergy} follows on taking
\begin{equation*}
	2A_s>A+2C+3+B_1,\qquad 2A'>A+2C+1.
\end{equation*}

The same argument permits a factor $m^{iv}w(m)$ in the M\"obius coefficient, where $|v|\le x^{C_0}$ and $w$ is a smooth weight on $[M,\,2M]$. Indeed, throughout the integration range,
\begin{equation*}
	|t+v|\le2XL^{B_1}+x^{C_0}\le x^{\max(2,\,C_0+1)}
\end{equation*}
for sufficiently large $x$, so \eqref{eq:mubv} gives
\begin{equation*}
\sup_{|t|\le2XL^{B_1}}\frac1M\biggl|\sum_m\mu(m)R_Z(m)\chi_m(m)m^{i(t+v)}w(m)\biggr|\ll L^{-A'}\bigl(\|w\|_\infty+\operatorname{Var}_{[M,\,2M]}(w)\bigr).
\end{equation*}
Since $|\beta_n n^{iv}|=|\beta_n|$, the additional phase preserves the hypothesis of Lemma~\ref{lem:sparse}. After absorbing the fixed logarithmic losses, \eqref{eq:energy} holds uniformly for both $Q$ and $Q'$. Cauchy's inequality therefore gives
\begin{equation*}
	\int_{|s|\le L^{B_1}}\int_{|t|\le XL^{B_1}}\biggl|F_\lambda\biggl(\frac tX,\,s\biggr)Q(t)Q'(s-t)\biggr|\,dt\,ds\ll  L^{B_1-A}.
\end{equation*}
Increasing the integration-by-parts order also gives
\begin{equation*}
	\iint_{\substack{|s|>L^{B_1}\ \text{or}\ |t|>XL^{B_1}}}\biggl|F_\lambda\biggl(\frac tX,\,s\biggr)Q(t)Q'(s-t)\biggr|\,dt\,ds\ll  L^{-D-9A_0-1}.
\end{equation*}
The arc count, character expansion, and $\lambda$-integration contribute factors $O(L^{2A_0})$, $O(L^{6A_0})$, and $O(L^{A_0})$, respectively. Taking $A=D+9A_0+B_1+1$, we obtain
\begin{equation*}
	|Q_Y^{\mathrm{maj}}|\ll_D XYL^{2A_0+6A_0+A_0}\bigl(L^{B_1-A}+L^{-D-9A_0-1}\bigr)\ll  XYL^{-D}.
\end{equation*}
This proves \eqref{eq:majbd}.
\end{proof}

\subsection{Proof of the main theorem}
\label{sec:final}

\begin{proof}[Proof of Theorem~\ref{thm:main}]
Fix the desired Type II precision $A_*>0$.
The coefficients in \eqref{eq:011}, namely
$\alpha_m=\mu(m)R_Z(m)m^{iv}$ and
$\beta_n=c_V(n)R_Z(n)n^{iv}$, satisfy Proposition~\ref{prop:D}.

Their size exponent is fixed. Proposition~\ref{prop:maj} then gives,
for any prescribed $D_*>0$,
\begin{equation*}
\begin{aligned}
Q_Y=Q_Y^{\mathrm{maj}}+O(XYL^{-D_*})\ll XYL^{-D_*}.
\end{aligned}
\end{equation*}
Thus \eqref{eq:CS} implies
\begin{equation}\label{eq:finalII}
\begin{aligned}
|T_Y|\ll_K XL^{-D_*/2}.
\end{aligned}
\end{equation}
Choose $B>A_*+2$ and truncate the Mellin integrals with error $O_K(xL^{2-B})$. Let $C_1>0$ be a fixed exponent such that the dyadic decompositions and retained Mellin integrals together contribute a factor $O_K(L^{C_1})$, with $C_1$ independent of $K$ and the exponents $a_i$. Choose
\begin{equation*}
	D_*>2(A_*+C_1),\qquad E_0>D_*+4C+2,
\end{equation*}
and then fix $A_0,K$ as required by Proposition~\ref{prop:D}. For this fixed $K$, Proposition~\ref{prop:mel} and \eqref{eq:majbd} allow any prescribed logarithmic saving. Thus $K$ may depend on $A_*$, but no subsequent enlargement is needed.

Combining \eqref{eq:finalII} with the smoothing error from Subsection~\ref{sec:cauchy}, we obtain
\begin{equation*}
	|T_3(F)|\ll_{A_*,\,K}xL^{C_1-D_*/2}+xL^{2-B}+x^{1/2}L\ll_{A_*,\,K}xL^{-A_*}.
\end{equation*}

Combining \eqref{eq:vaughan}, \eqref{eq:typeI}, and \eqref{eq:finalII}, we obtain, for each fixed choice of the sets $\mathcal P_1,\ldots,\mathcal P_K$,
\begin{equation}\label{eq:marked}
\begin{aligned}
\sum_{x<p\le2x}(\log p)\mu(p+2)\mathcal W(p+2)=o(x).
\end{aligned}
\end{equation}
The value of $K$ used here is independent of the particular exponents $a_i$. Since the mark is
bounded, the prime-power error is $O_K(x^{1/2}L^2)$.

Fix a positive integer $b$. Choose $bK$ distinct exponents in
$(0.1,\,0.2)$ and partition them into $b$ groups of $K$ exponents each.
For each group, define a weight $\mathcal W_j$ by \eqref{eq:prime-sets}
and \eqref{eq:mark}. The $bK$ sets of primes used to define these weights
are pairwise disjoint for sufficiently large $x$.

Finally, applying \eqref{eq:marked} to each $\mathcal W_j$ and then using
Proposition~\ref{prop:rem}, we get
\begin{equation*}
\begin{aligned}
\limsup_{x\to\infty}\frac{1}{x}\,\bigg|\sum_{x<p\le2x}\mu(p+2)\log p
\bigg|\le\sqrt{C_{K,\,\rho}/b}.
\end{aligned}
\end{equation*}
Let first $x\to\infty$ with $b$ fixed, and then let $b\to\infty$, one gets
\begin{align}\label{2-dy}
	\sum_{x<p\le2x}\mu(p+2)\log p=o(x).
\end{align}
Then, summing \eqref{2-dy} over dyadic intervals, we arrive at
\begin{equation*}
\begin{aligned}
%A(x):=
\sum_{p\le x}\mu(p+2)\log p=o(x),
\end{aligned}
\end{equation*}
and Theorem~\ref{thm:main} follows immediately from this estimate by partial summation.
%or equivalently $\sum_{n\le X}\Lambda(n)\mu(n+2)=o(X).$
%Finally, by partial summation we conclude
%\begin{equation*}%
%\begin{aligned}
%\sum_{p\le X}\mu(p+2)
% =\frac{A(X)}{\log X}
%   +\int_2^X\frac{A(t)}{t(\log t)^2}\,dt=o(\pi(X)),
%\end{aligned}
%\end{equation*}
%which completes the proof of Theorem~\ref{thm:main}.
\end{proof}

\vskip 5mm

\section{\large Squarefree shifted primes and the distribution of signs}
\label{sec:squarefree-signs}

The squarefree-density estimate holds for every fixed nonzero shift;
its constant depends on the prime divisors of the shift.

\begin{proposition}\label{prop:squarefree-density}
For every fixed integer $h\ne0$ and every $A>0$,
\begin{equation}\label{eq:sf-asymptotic}
 \sum_{\substack{1\leq n\le x\\n+h\geq 1}}\Lambda(n)\mu^2(n+h)
 =A_hx+O_{A,h}\bigg(\frac{x}{(\log x)^A}\bigg),
 \quad\text{with}\quad A_h:=\prod_{p\nmid h}\bigg(1-\frac1{p(p-1)}\bigg)>0.
\end{equation}
\end{proposition}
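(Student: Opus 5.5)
We expand the squarefree indicator as $\mu^2(m)=\sum_{d^2\mid m}\mu(d)$ and split the sum according to the size of $d$. Fix a parameter $y=(\log x)^{B}$ with $B=B(A)$ large. For $n\le x$ with $n+h\ge1$ we write
\begin{equation*}
\sum_{n}\Lambda(n)\mu^2(n+h)=\sum_{d\le y}\mu(d)\sum_{\substack{n\le x\\ n\equiv -h\,(\mathrm{mod}\,d^2)}}\Lambda(n)+O\biggl(\sum_{d>y}\ \sum_{\substack{n\le x\\ d^2\mid n+h}}\Lambda(n)\biggr).
\end{equation*}
The first step handles the small moduli $d\le y$. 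Here $d^2\le(\log x)^{2B}$, so the Siegel--Walfisz theorem applies directly. If $(d,h)>1$, then a prime $p\mid(d,h)$ forces $p\mid n$. Hence only $n=p^\nu$ with $p\mid h$ can contribute, and these give $O_h(\log^2x)$. If $(d,h)=1$, the class $-h$ is reduced modulo $d^2$, and
\begin{equation*}
\sum_{\substack{n\le x\\ n\equiv-h\,(d^2)}}\Lambda(n)=\frac{x}{\varphi(d^2)}+O_{A,B}\bigl(x(\log x)^{-A-2B}\bigr).
\end{equation*}
Summing over $d\le y$ yields the main term $x\sum_{d\le y,\,(d,h)=1}\mu(d)/\varphi(d^2)$. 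The tail $\sum_{d>y}1/\varphi(d^2)\ll 1/y$ is acceptable. The completed series factors as the Euler product $\prod_{p\nmid h}(1-1/(p(p-1)))=A_h$, since $\varphi(p^2)=p(p-1)$. This product is positive because every factor is positive and $\sum_p 1/p^2<\infty$.

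The second step, the large moduli $d>y$, is the main obstacle, since we need a saving of a power of $\log x$ beyond the trivial one. We bound $\Lambda(n)\le\log x$ and drop primality, counting integers $m\le x+|h|$ with $d^2\mid m$ for some $y<d$.

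For $y<d\le\sqrt x/(\log x)^{A+2}$ we use the count $\ll x/d^2+1$. This gives $\ll x/y+\sqrt x$, which is $O(x(\log x)^{-A-1})$ once $B\ge A+1$.

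The range $\sqrt x(\log x)^{-A-2}<d\le\sqrt{x+|h|}$ is the delicate one. Here we count prime powers $n\le x$ with $n\equiv -h \pmod{d^2}$. Each residue class modulo $d^2$ contains at most $x/d^2+1\ll(\log x)^{2A+4}$ integers up to $x$, but summing this over $d$ loses too much. Instead we apply the Brun--Titchmarsh inequality, which is valid for $d^2\le x^{1-\varepsilon}$ and gives $\ll x/(\varphi(d^2)\log(x/d^2))$. For the very top range $d^2>x^{1-\varepsilon}$ we write $n+h=d^2k$ with $k\le x^{\varepsilon}+1$. For each fixed $k$, the number of primes of the form $d^2k-h$ with $d\le\sqrt{2x}$ is at most the number of $d$, but a sieve upper bound (Selberg's sieve for the quadratic polynomial $kd^2-h$ in $d$) gives $\ll_h \sqrt{x/k}/\log x$ times a bounded singular-series factor.

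Summing over $k\le x^{\varepsilon}$ gives $\ll x^{1/2+\varepsilon/2}$. This is negligible, and prime powers $n=p^\nu$ with $\nu\ge2$ contribute only $O(\sqrt x\log x)$. Choosing $\varepsilon=1/2$ and covering the intermediate range by Brun--Titchmarsh, whose sum is $\ll x/y$, completes the error bound $O_{A,h}(x(\log x)^{-A})$. Together with the first step, this proves \eqref{eq:sf-asymptotic}.
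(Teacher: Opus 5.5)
Your proof is correct and is essentially the paper's argument: expand $\mu^2(n+h)=\sum_{d^2\mid n+h}\mu(d)$, apply Siegel--Walfisz for $d\le(\log x)^{B}$, and complete $\sum_{(d,h)=1}\mu(d)/\varphi(d^2)$ to the Euler product $A_h$. (The paper first discards $p\mid h$ and prime powers, which forces $(d,h)=1$, in place of your case split.)

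The one thing to correct is your assessment of the tail $d>y$, which is not an obstacle. The number of $m\le x+|h|$ with $d^2\mid m$ is exactly $\lfloor (x+|h|)/d^2\rfloor$, which vanishes once $d>\sqrt{x+|h|}$, so the whole tail is at most $(\log x)(x+|h|)\sum_{d>y}d^{-2}\ll x(\log x)/y$ in one line. Even your bound $x/d^2+1$, summed over all $y<d\le\sqrt{x+|h|}$, gives only $\ll x/y+\sqrt{x}$, so it does not ``lose too much.'' The Brun--Titchmarsh and Selberg-sieve steps, including the unjustified uniformly bounded singular-series factor, can therefore simply be deleted.
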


\begin{proof}
	In the following we omit the condition $n+h\geq 1$ for notation simplicity.
	First, it is easy to see that it suffices to consider the sum
	$$
	\sum_{\substack{p\le x,\, p\,\nmid\, h}}\mu^2(p+h)\log p.
	$$
For these $p\,\nmid\, h$, we have 
\begin{equation}\label{eq:sf-mobius-inversion}
	\mu^2(p+h)=\sum_{d^2\mid p+h}\mu(d),
\end{equation}
and hence  $(d,\,h)=1$.

Recall that $L=\log x$.
The terms with $d>L^{A+2}$ have total absolute contribution at most
\begin{equation}\label{eq:sf-large-tail}
 L\sum_{d>L^{A+2}}
 \bigg\lfloor\frac{x+|h|}{d^2}\bigg\rfloor
 \ll_{A,h}\frac{x}{L^{A+1}}.
\end{equation}

For $(d,h)=1$ and $d\le L^{A+2}$, the Siegel--Walfisz theorem (see \cite[Theorem 12.1]{kouk}) gives
\begin{equation}\label{eq:sf-small-moduli}
 \sum_{\substack{p\le x\\p\equiv-h\,(\mathrm{mod}\,d^2)}}\log p
 =\frac{x}{\varphi(d^2)}+O_A(xL^{-2A-4}).
\end{equation}
Restoring the omitted primes dividing $h$ and the finitely many
primes with $p+h\le0$ costs $O_h(L^{A+2})$ after summation over $d$.
Thus \eqref{eq:sf-mobius-inversion}--\eqref{eq:sf-small-moduli} imply
\begin{equation}\label{eq:sf-small-main}
 \sum_{\substack{n\le x\\n+h\ge1}}\Lambda(n)\mu^2(n+h)
 =x\sum_{\substack{d\le L^{A+2}\\(d,h)=1}}
      \frac{\mu(d)}{\varphi(d^2)}+O_{A,h}(xL^{-A}).
\end{equation}

Since $1/\varphi(d^2)\le\tau(d)/d^2$ and
$\sum_{d\le t}\tau(d)\ll t\log(2t)$, partial summation gives
\begin{equation}\label{eq:sf-coefficient-tail}
 \sum_{d>L^{A+2}}\frac{|\mu(d)|}{\varphi(d^2)}
 \ll_A\frac{\log L}{L^{A+2}}.
\end{equation}
The absolutely convergent Euler product is
\begin{equation}\label{eq:sf-euler-product}
 \sum_{\substack{d\ge1\\(d,h)=1}}\frac{\mu(d)}{\varphi(d^2)}
 =\prod_{p\nmid h}\bigg(1-\frac1{p(p-1)}\bigg)=A_h>0.
\end{equation}
 Combining \eqref{eq:sf-small-main}--\eqref{eq:sf-euler-product}, we 
complete the proof of Proposition \ref{prop:squarefree-density}.
\end{proof}

\begin{proof}[Proof of Theorem \ref{thm2}]
	Theorem \ref{thm2} is an immediately consequence of Theorem \ref{thm:main} and  Proposition \ref{prop:squarefree-density}. To see this, we first apply partial	summation to the asymptotic formula \eqref{eq:sf-asymptotic} with $h=2$, getting
	\begin{equation}\label{eq:sf-prime-count}
		\sum_{p\le x}\mu^2(p+2)=\big(A_2+o(1)\big)\pi(x).
	\end{equation}
	Next, note that for $\kappa\in\{-1,1\}$, we have 
	$$\mathbf{1}_{\{\mu(m)=\kappa\}}=\frac{\mu^2(m)+\kappa\mu(m)}{2}.$$
	This identity, together with \eqref{eq:sf-prime-count} and
	Theorem~\ref{thm:main}, gives
	\begin{align*}
		\Big|\big\{p\le x:\ \mu(p+2)=\kappa\big\}\Big|
		=\frac12\sum_{p\le x}\mu^2(p+2)
		+\frac{\kappa}{2}\sum_{p\le x}\mu(p+2)=\bigg(\frac{A_2}{2}+o(1)\bigg)\pi(x).
	\end{align*}
\end{proof}
\vskip 3mm

\appendix
\section{Proof of the root-counting estimate}\label{Append}

We include the proof of the root-counting estimate used in Subsection~\ref{sec:roots} so that this input can be verified within the present paper. Compared with \cite[Lemma 3.3]{PD}, we establish Lemma~\ref{lem:app-root-count} below with an explicit error term for arbitrary squarefree $S$, allowing both coordinate intervals to lie in the interval  $[1,\,16]$. The main term is unchanged, and the proof follows the argument of \cite{PD}.

\begin{lemma}\label{lem:app-root-count}
	Suppose that $d_0YM,\,N\ge2$, and let $S$ be a squarefree positive integer. For positive coprime integers $u,\,v$, let
	\begin{equation*}
		g=\begin{pmatrix}u&c\\v&d\end{pmatrix}\in\mathrm{SL}_2(\mathbb Z),\qquad 0\le c<u,
	\end{equation*}
	be the normalized completion of $(u,\,v)^T$. For $g_0\in\mathrm{SL}_2(\mathbb Z/S\mathbb Z)$ and intervals $I_1,\,I_2\subseteq[1,\,16]$ and $I_3\subseteq[0,\,1]$, we have
	\begin{equation}\label{eq:app-root-count}
		\begin{aligned}
			\sum_{\substack{u,\,v\ge1,\,(u,\,v)=1\\ \frac{u}{d_0YM}\in I_1,\,\frac vN\in I_2\\ \frac cu\in I_3,\,g\equiv g_0\,(\textrm{mod}\,S)}}1&=\frac{d_0YMN\,|I_1||I_2||I_3|}{\zeta(2)|\mathrm{SL}_2(\mathbb Z/S\mathbb Z)|}+O\Bigl(d_0YMN\,S^2\min\{d_0YM,\,N\}^{\frac{-1}{100}}\Bigr).
		\end{aligned}
	\end{equation}
\end{lemma}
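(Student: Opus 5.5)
We count lattice points of $\mathrm{SL}_2(\mathbb Z)$ in a box with a congruence condition. Write $X_1=d_0YM$ and $X_2=N$. The plan has four steps.

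\emph{Step 1: freeze the first column modulo $S$.} Write $g_0=\begin{pmatrix}u_0&c_0\\v_0&d_0'\end{pmatrix}$. The condition $g\equiv g_0\pmod S$ splits into two parts:
\begin{itemize}
\item a condition on the first column, $(u,v)\equiv(u_0,v_0)\pmod S$;
\item given that, a condition on the second column, $c\equiv c_0\pmod S$, because $d$ is then determined modulo $S$ by $ud-vc=1$.
\end{itemize}
For fixed $u$ the admissible complements $c\in[0,u)$ form a single residue class modulo $u$. Among $(u,v)$ with $(u,S)=1$ the map $v\mapsto c\equiv v^{-1}$ (mod $u$) is therefore natural. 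So we swap the roles: fix $u$ (with $u\equiv u_0$ mod $S$, so $(u,S)=1$, since $\det g_0=1$ forces $(u_0,v_0,S)$ to be coprime and we may reduce to $(u_0,S)=1$ prime by prime, handling primes $p\mid (u_0,S)$ by swapping coordinates locally). Then count pairs $(v,c)$ with:
\begin{itemize}
\item $vc\equiv -1\pmod u$;
\item $v/X_2\in I_2$ and $c/u\in I_3$;
\item $v\equiv v_0$ and $c\equiv c_0\pmod S$.
\end{itemize}
By CRT the last two congruences together with the first become a single condition modulo $uS$ on the pair. In practice it is cleaner to parametrize $v=v_1+kS$, $c=c_1+lS$, reducing to counting solutions of a modular hyperbola $vc\equiv -1\pmod{u}$ in a box, restricted to fixed classes modulo $S$.

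\emph{Step 2: completion of sums.} For fixed $u\asymp X_1$, detect the box in $(v,c)$ with additive characters modulo $uS$. The zero frequency gives
$$|I_2|X_2\,|I_3|u\cdot\frac{\varphi(u)}{u^2}\cdot\frac{1}{\#\{\text{classes mod }S\}},$$
with the local factor at $S$ producing $|\mathrm{SL}_2(\mathbb Z/S)|^{-1}$ after it is combined with the $u\equiv u_0$ condition. The nonzero frequencies give Kloosterman sums $S(h_1,h_2;uS)$. The Weil bound estimates these by $(uS)^{1/2+\varepsilon}(h_1,h_2,uS)^{1/2}$, giving an error per $u$ of $O(S^{O(1)}u^{1/2+\varepsilon}\log^2)$. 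Summing over $u\le 16X_1$ gives $X_1^{3/2+\varepsilon}S^{O(1)}$, which is acceptable relative to $X_1X_2$ only when $X_2\ge X_1^{1/2+\eta}$.

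\emph{Step 3: the unbalanced case.} When $X_2$ is smaller, use the symmetric parametrization instead:
\begin{itemize}
\item fix $v$, and count $u$ in its interval with $u\equiv u_0\pmod S$;
\item note that $c$ is then the unique solution of $-vc\equiv 1\pmod u$ in $[0,u)$, i.e.\ $c/u\equiv -\bar v/u$, which is equivalent to the condition $\bar u/v$ (mod 1) up to $O(1/(uv))$ by the reciprocity $\bar v/u+\bar u/v\equiv 1/(uv)$ (mod 1);
\item the condition $c/u\in I_3$ thus becomes a condition on $\bar u \pmod v$ in an interval;
\item completing modulo $vS$ again gives Kloosterman sums, now with error $X_2^{3/2+\varepsilon}S^{O(1)}$, which is fine when $X_1\ge X_2^{1/2+\eta}$.
\end{itemize}
One of the two cases always applies, since both hold when the ratio is near one. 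The combined saving is a fixed power of $\min\{X_1,X_2\}$, far better than $1/100$. The factor $S^2$ covers the losses from the CRT classes, with $S^{1/2}$ from Weil and divisor factors. If $S$ is large the error term exceeds the main term anyway, and the bound is trivial.

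\emph{Step 4: coprimality and the main term.} Remove $(u,v)=1$ by M\"obius inversion $\sum_{e\mid(u,v)}\mu(e)$, truncating at $e\le \min\{X_1,X_2\}^{1/10}$ and bounding the tail trivially by $X_1X_2/e^2$ summed. The local densities should multiply to $\zeta(2)^{-1}|\mathrm{SL}_2(\mathbb Z/S)|^{-1}$. The main obstacle is bookkeeping: verifying this product, uniformly for primes dividing $S$ where the Möbius variable interacts with the congruence, and checking that Step 1's coordinate swap at primes dividing $u_0$ is consistent with the interval conditions. I would verify the density by computing the proportion of $\mathrm{SL}_2(\mathbb Z/p)$ with prescribed first column times $p$-adic coprimality, which gives $(1-p^{-2})$ times the fiber count divided by $p^2-1$, i.e.\ $1/(p(p^2-1))\cdot p$ per class, matching $|\mathrm{SL}_2|^{-1}$ after the $c$-uniformity.
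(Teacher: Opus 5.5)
Your architecture is the paper's. You fix the smaller coordinate $u$, count $(c,v)$ in a box on the modular hyperbola modulo $uS$ by completing with additive characters and bounding Kloosterman sums, and handle the other regime through $\bar u/v+\bar v/u\equiv 1/(uv)\pmod 1$, the same identity behind the paper's involution $g\mapsto\tilde g$.

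\textbf{The gap is in Step~1.} Take a prime $p\mid S$ with $p\mid u_0$, so $p\mid u$. Then $d\bmod p$ is \emph{not} determined by $u,v,c$ modulo $S$. Since $d=(1+vc)/u$, the condition $d\equiv d_0'\pmod p$ is the congruence $vc\equiv ud_0'-1\pmod{up}$. This is finer than the hyperbola modulo $u$ together with the classes of $v,c$ modulo $S$ can express.

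Your remedy of reducing to $(u_0,S)=1$ ``by swapping coordinates locally'' is not available. Which coordinate serves as the modulus is one global choice. If $S$ has one prime dividing $u_0$ and another dividing $v_0$, neither parametrization avoids the problem. This is also not a corner case: in the application every prime in the root lists has $\ell_p=[(1,0)^T]_p$, i.e.\ $p\mid u$. For the same reason your zero-frequency factor $\varphi(u)/u$ is wrong at primes $p\mid(u,S)$, where no factor $1-1/p$ should appear.

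\textbf{The repair is the paper's formulation.} Impose $c\equiv c_0$ and $v\equiv v_0\pmod S$ together with $cv\equiv ud_0'-1\pmod{uS}$.
\begin{itemize}
\item These conditions are equivalent to $g\equiv g_0\pmod S$.
\item They already force $(u,v)=1$, so the M\"obius inversion in your Step~4 is unnecessary.
\item They have exactly $u\prod_{p\mid u,\,p\nmid S}(1-1/p)$ solutions modulo $uS$.
\end{itemize}
With $S_1=(u,S)$, detect $c\equiv c_0\pmod{S_1}$ by averaging over $j\bmod S_1$. This gives the sums $K_{uS_1}(\gamma h+ju,\gamma k(ud_0'-1))$, whose gcd factor is at most $S_1(h,k,u)$. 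So Weil still loses only a power of $S$. Your observation that the lemma is trivial once $S^2\min\{d_0YM,N\}^{-1/100}\ge 1$ absorbs that power.

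For the main term, sum $\prod_{p\mid u,\,p\nmid S}(1-1/p)$ over $u\equiv u_0\pmod S$ in $d_0YM\,I_1$, expanding over $\ell$ coprime to $S$. This gives $\frac{d_0YM|I_1|}{S\zeta(2)}\prod_{p\mid S}(1-p^{-2})^{-1}$. Multiplying by the per-$u$ factor $N|I_2||I_3|/S^2$ yields the stated main term, since $|\mathrm{SL}_2(\mathbb Z/S\mathbb Z)|=S^3\prod_{p\mid S}(1-p^{-2})$.

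\textbf{Comparison with the paper.} With this fix your route works and is stronger. You import Weil and complete directly over the long $v$-interval using $|\sum_{v\in I}e(hv/uS)|\le\|h/uS\|^{-1}$. (Frequencies vanishing in the $v$-variable add Ramanujan-sum terms $\ll Nu^{-1+\varepsilon}$ per $u$; you omitted them, but they are harmless.) This gives a saving $\min\{d_0YM,N\}^{-1/2+\varepsilon}$.

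The paper instead proves only the elementary fourth-moment bound $|K_m(h,k)|\ll m^{3/4+\varepsilon}(h,k,m)^{1/4}$ for all moduli. It then counts one period of length $uS$ at a time with majorants smoothed at scale $u^{-0.01}$, so each of the $O(N/u+1)$ pieces costs $u^{0.99}S^2$. That is why it needs $d_0YM\le N$ before invoking the involution, and why it only reaches the exponent $1/100$. In exchange it is self-contained, and that exponent suffices for its application.
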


\begin{proof}
	We first prove an exponential-sum bound for arbitrary moduli. For $m\ge2$ and integers $h,\,k$, put
	\begin{equation*}
		K_m(h,\,k)=\sum_{\substack{y\,(\textrm{mod}\,m)\\(y,\,m)=1}}e\biggl(\frac{hy+ky^{-1}}m\biggr),
	\end{equation*}
	where $y^{-1}$ denotes the inverse of $y$ modulo $m$. Additive orthogonality gives
	\begin{equation*}
		\sum_{h,\,k\,(\textrm{mod}\,m)}|K_m(h,\,k)|^4=m^2T_m,
	\end{equation*}
	where $T_m$ counts quadruples of units $y_1,\,y_2,\,z_1,\,z_2$ modulo $m$ satisfying
	\begin{equation*}
		y_1+y_2\equiv z_1+z_2\,(\textrm{mod}\,m),\qquad y_1^{-1}+y_2^{-1}\equiv z_1^{-1}+z_2^{-1}\,(\textrm{mod}\,m).
	\end{equation*}
	Substituting $z_2=y_1+y_2-z_1$ into the second congruence and multiplying by $y_1y_2z_1z_2$ gives
	\begin{equation*}
		m\mid(y_1+y_2)(z_1-y_1)(z_1-y_2).
	\end{equation*}
	The map $(y_1,\,y_2,\,z_1)\mapsto(y_1+y_2,\,z_1-y_1,\,z_1-y_2)$ has kernel $\{(t,\,-t,\,t):\ 2t\equiv0\,(\textrm{mod}\,m)\}$, of size at most two. Every triple $(a_1,\,a_2,\,a_3)$ with $m\mid a_1a_2a_3$ satisfies $d_1\mid a_1$, $d_2\mid a_2$, and $d_3\mid a_3$ for some $d_1d_2d_3=m$, by distributing the exponent of each prime among the three factors. Thus, with $\tau_3(m)$ denoting the number of ordered factorizations $m=d_1d_2d_3$,
	\begin{equation*}
		T_m\le2\sum_{d_1d_2d_3=m}\frac m{d_1}\frac m{d_2}\frac m{d_3}=2m^2\tau_3(m).
	\end{equation*}
	
	For every unit $s$ modulo $m$, substitution in the defining sum gives $K_m(hs,\,ks^{-1})=K_m(h,\,k)$. The stabilizer consists of the units satisfying $s\equiv1\,(\textrm{mod}\,m/(h,\,k,\,m))$. Reduction between the corresponding unit groups is surjective, so there are exactly $\varphi(m/(h,\,k,\,m))$ distinct pairs $(hs,\,ks^{-1})$. Consequently,
	\begin{equation*}
		|K_m(h,\,k)|^4\le\frac{2m^4\tau_3(m)}{\varphi(m/(h,\,k,\,m))}.
	\end{equation*}
	Using $\tau_3(m)\ll_\varepsilon m^\varepsilon$ and $m/\varphi(m)\ll_\varepsilon m^\varepsilon$, which follow by separating the finitely many small primes, we obtain
	\begin{equation}\label{eq:app-kloosterman}
		|K_m(h,\,k)|\ll_\varepsilon m^{3/4+\varepsilon}(h,\,k,\,m)^{1/4}.
	\end{equation}
	This includes even moduli and frequencies not coprime to the modulus.
	
	Suppose first that $d_0YM\le N$. Choose integer representatives $g_0=\begin{pmatrix}u_0&c_0\\v_0&d'_0\end{pmatrix}$ and fix $u\equiv u_0\,(\textrm{mod}\,S)$. Let $\mathscr R_u$ be the set of pairs $(c,\,v)$ modulo $uS$ satisfying
	\begin{equation*}
		c\equiv c_0\,(\textrm{mod}\,S),\qquad v\equiv v_0\,(\textrm{mod}\,S),\qquad cv\equiv ud'_0-1\,(\textrm{mod}\,uS).
	\end{equation*}
	These congruences are equivalent to the prescribed matrix congruence once $d=\frac{1+cv}u$. The last congruence implies $(u,\,c)=(u,\,v)=1$. At primes dividing both $u$ and $S$, coprimality of $c$ and $u$ follows from $u_0d'_0-c_0v_0\equiv1\,(\textrm{mod}\,S)$; at the other primes dividing $u$, it excludes the proportion $1/p$ of possible lifts. For each admissible $c$, writing $v=v_0+Sj$ gives a unique $j$ modulo $u$. Therefore
	\begin{equation*}
		\#\mathscr R_u=u\prod_{\substack{p\mid u,\ p\nmid S}}\biggl(1-\frac1p\biggr).
	\end{equation*}
	
	We next show that the pairs in $\mathscr R_u$ are uniformly distributed in rectangles of the torus $(\mathbb R/\mathbb Z)^2$. Put $S_1=(u,\,S)$ and $S_2=\frac S{S_1}$, and choose $\gamma$ with $\gamma S_2\equiv1\,(\textrm{mod}\,uS_1)$. Since $S$ is squarefree, $(S_2,\,uS_1)=1$. Modulo $S_2$, both coordinates are fixed; modulo $uS_1$, they satisfy $cv\equiv ud'_0-1$ and $c\equiv c_0\,(\textrm{mod}\,S_1)$. The latter condition also forces $v\equiv v_0\,(\textrm{mod}\,S_1)$. The Chinese remainder theorem and additive orthogonality give
	\begin{equation*}
		\biggl|\sum_{(c,\,v)\in\mathscr R_u}e\biggl(\frac{hc+kv}{uS}\biggr)\biggr|\le\frac1{S_1}\sum_{j\,(\textrm{mod}\,S_1)}\bigl|K_{uS_1}\bigl(\gamma h+ju,\,\gamma k(ud'_0-1)\bigr)\bigr|.
	\end{equation*}
	Both $\gamma$ and $ud'_0-1$ are coprime to $uS_1$. For $0<\max\{|h|,\,|k|\}\le u^{0.02}$, we have
	\begin{equation*}
		(\gamma h+ju,\,\gamma k(ud'_0-1),\,uS_1)\le S_1(h,\,k,\,u)\le S_1u^{0.02}.
	\end{equation*}
	Applying \eqref{eq:app-kloosterman} with $\varepsilon=1/200$ we have for $0<\max\{|h|,\,|k|\}\le u^{0.02}$
	\begin{equation}\label{eq:app-root-fourier}
		\biggl|\sum_{(c,\,v)\in\mathscr R_u}e\biggl(\frac{hc+kv}{uS}\biggr)\biggr|\ll u^{0.760}S_1^{1.005}\le u^{0.760}S^2.
	\end{equation}
	
	To pass from \eqref{eq:app-root-fourier} to interval counts, take $\Delta=\frac1{100}u^{-0.01}$. For any interval $J$ in $\mathbb R/\mathbb Z$, enlargement or contraction by $2\Delta$, followed by convolution with a smooth probability density supported within distance $\Delta$, gives functions $f_J^-,\,f_J^+$ satisfying
	\begin{equation*}
		0\le f_J^-\le\mathbf1_J\le f_J^+\le1,\qquad \widehat f_J^\pm(0)=|J|+O(\Delta),\qquad |\widehat f_J^\pm(n)|\ll_j(1+\Delta|n|)^{-j}.
	\end{equation*}
	Here $\widehat f(n)=\int_0^1f(t)e(-nt)\,dt$. If $|J|<4\Delta$, take $f_J^-=0$; if the enlarged interval fills the circle, take $f_J^+=1$. The same bounds remain valid. For either choice of sign,
	\begin{equation*}
		\sum_{n\in\mathbb Z}|\widehat f_J^\pm(n)|\ll\Delta^{-1},\qquad \sum_{|n|>u^{0.02}}|\widehat f_J^\pm(n)|\ll\Delta^{-10}(u^{0.02})^{-9}.
	\end{equation*}
	Apply the products of these upper and lower functions to two torus intervals $J_1,\,J_2$. The zero frequency contributes $(\#\mathscr R_u)|J_1||J_2|+O(u\Delta)$. There are $O(u^{0.04})$ nonzero frequency pairs with $\max\{|h|,\,|k|\}\le u^{0.02}$, while
	\begin{equation*}
		\sum_{\max\{|h|,\,|k|\}>u^{0.02}}|\widehat f_{J_1}^\pm(h)\widehat f_{J_2}^\pm(k)|\ll\Delta^{-11}(u^{0.02})^{-9}\ll u^{-0.07}.
	\end{equation*}
	Using the trivial bound $\#\mathscr R_u\le u$ for these remaining frequencies, we obtain
	\begin{equation}\label{eq:app-root-discrepancy}
		\begin{aligned}
			\biggl|\sum_{(c,\,v)\in\mathscr R_u}\mathbf1_{J_1}\biggl(\frac c{uS}\biggr)\mathbf1_{J_2}\biggl(\frac v{uS}\biggr)-(\#\mathscr R_u)|J_1||J_2|\biggr|&\ll u^{0.99}+u^{0.80}S^2+u^{0.93}\ll u^{0.99}S^2.
		\end{aligned}
	\end{equation}
	The arguments of the indicators are taken modulo one. In particular, the error is independent of the lengths and locations of $J_1,\,J_2$.
	
	For fixed $u$, the restriction $\frac cu\in I_3$ corresponds to a torus interval of length $\frac{|I_3|}S$. Split the interval $NI_2$ for $v$ at multiples of $uS$; it has $O(\frac Nu+1)$ pieces, with total normalized length $\frac{N|I_2|}{uS}$. Thus \eqref{eq:app-root-discrepancy} gives
	\begin{equation*}
		\sum_{\substack{v,\,c\in\mathbb Z,\,0\le c<u\\ \frac vN\in I_2,\,\frac cu\in I_3\\ (c,\,v)\,(\textrm{mod}\,uS)\in\mathscr R_u}}1=\frac{N|I_2||I_3|}{S^2}\prod_{\substack{p\mid u\\p\nmid S}}\biggl(1-\frac1p\biggr)+O\bigl((\frac Nu+1)u^{0.99}S^2\bigr).
	\end{equation*}
	Summing the errors over $u\in d_0YM I_1$ gives $O(d_0YMN(d_0YM)^{-0.01}S^2)$. To sum the main terms, expand the product into divisors. Since $(\ell,\,S)=1$, the conditions $\ell\mid u$ and $u\equiv u_0\,(\textrm{mod}\,S)$ specify one residue class modulo $\ell S$, whether or not $(u_0,\,S)=1$. Consequently,
	\begin{equation*}
		\begin{aligned}
			\sum_{\substack{\frac u{d_0YM}\in I_1\\u\equiv u_0\,(\textrm{mod}\,S)}}\prod_{\substack{p\mid u\\p\nmid S}}\biggl(1-\frac1p\biggr)&=\sum_{\substack{\ell\le16d_0YM\\(\ell,\,S)=1}}\frac{\mu(\ell)}\ell\biggl(\frac{d_0YM|I_1|}{\ell S}+O(1)\biggr)\\
			&=\frac{d_0YM|I_1|}{S\zeta(2)}\prod_{p\mid S}(1-p^{-2})^{-1}+O\bigl(\log(2d_0YM)\bigr).
		\end{aligned}
	\end{equation*}
	Here extending the series in $\ell$ to infinity costs $O(1/S)$. Since
	\begin{equation*}
		|\mathrm{SL}_2(\mathbb Z/S\mathbb Z)|=\prod_{p\mid S}p(p^2-1)=S^3\prod_{p\mid S}(1-p^{-2}),
	\end{equation*}
	we obtain the main term in \eqref{eq:app-root-count}. The additional error is absorbed because
	\begin{equation*}
		\frac{N\log(2d_0YM)}{S^2}\ll d_0YMN(d_0YM)^{-0.01}S^2.
	\end{equation*}
	
	Finally suppose that $N<d_0YM$. Since $u,\,v>1$, the normalization $0\le c<u$ implies $0<d<v$. The matrix
	\begin{equation*}
		\widetilde g=\begin{pmatrix}v&v-d\\u&u-c\end{pmatrix}\in\mathrm{SL}_2(\mathbb Z),\qquad \frac{v-d}{v}=1-\frac cu-\frac1{uv},
	\end{equation*}
	is the normalized completion of $(v,\,u)^T$, and $g\mapsto\widetilde g$ is an involution, also modulo $S$. Apply the case already proved with the two coordinate scales and intervals interchanged. The condition $\frac cu\in I_3$ is bounded above and below by requiring $\frac{v-d}{v}$ to lie in the interval $1-I_3$ enlarged or contracted by $\frac1{d_0YMN}$, with intersection with $[0,\,1]$. These changes alter the predicted main term by $O(1)$, which is absorbed by the error in \eqref{eq:app-root-count}. This proves the lemma for both orders of the coordinate scales.
\end{proof}

We now deduce the count with odd coordinates and prescribed projective lines used in Subsection~\ref{sec:roots}. Let $S$ be odd and squarefree, and fix $\ell_p\in\mathbb P^1(\mathbb F_p)$ for every $p\mid S$. For odd $u,\,v$, put
\begin{equation*}
	g'=\begin{pmatrix}u&-2c\\(u-v)/2&d-c\end{pmatrix},\qquad \ell_p(g')=\ker\bigl((u,\,-2c):\mathbb F_p^2\longrightarrow\mathbb F_p\bigr).
\end{equation*}
Apply Lemma~\ref{lem:app-root-count} with modulus $2S$. Exactly two of the six matrices in $\mathrm{SL}_2(\mathbb F_2)$ have first column $(1,\,1)^T$. For an odd prime $p$, each nonzero first row has $p$ completions to a matrix of determinant one; multiplication of its second entry by $-2$ permutes the nonzero rows. Each prescribed kernel line is therefore attained by exactly $p(p-1)$ matrices. By the Chinese remainder theorem, the proportion of allowed matrix classes is
\begin{equation*}
	\frac{2\prod_{p\mid S}p(p-1)}{|\mathrm{SL}_2(\mathbb Z/(2S)\mathbb Z)|}=\frac13\prod_{p\mid S}\frac1{p+1}.
\end{equation*}
Summing the estimate over at most $(2S)^3$ matrix classes gives
\begin{equation}\label{eq:app-root-lines}
	\begin{aligned}
		\frac1{d_0YMN}\sum_{\substack{u,\,v\ge1,\,(u,\,v)=1,\,2\nmid uv\\ \frac u{d_0YM}\in I_1,\,\frac vN\in I_2,\,\frac cu\in I_3\\ \ell_p(g')=\ell_p\ (p\mid S)}}1&=\frac{|I_1||I_2||I_3|}{3\zeta(2)\prod_{p\mid S}(p+1)}
		+O\Bigl(S^5\min\{d_0YM,\,N\}^{\frac{-1}{100}}\Bigr).
	\end{aligned}
\end{equation}
If $\min\{d_0YM,\,N\}\ge x^\delta$ and $S\le\exp(C_0L^{0.98})$ for fixed $\delta,\,C_0>0$, then
\begin{equation*}
	S^5\min\{d_0YM,\,N\}^{-1/100}\le\exp\bigl(5C_0L^{0.98}-\delta L/100\bigr)\le x^{-\delta/200}
\end{equation*}
for sufficiently large $x$. Thus \eqref{eq:app-root-lines} gives the required uniform power saving, with density $\frac1{3\zeta(2)}$ and factor $1/(p+1)$ for each prescribed odd-prime line.

\vskip 3mm

\noindent\textbf{Acknowledgements}\hspace{2mm}

The authors would like to thank ChatGPT for its assistance in
understanding the methods in \cite{PD}, and in establishing the Appendix \ref{Append}. The authors also thank ChatGPT for helping to check the manuscript.

\nocite{MV}
\bibliographystyle{amsplain}
\bibliography{Wang}

@book{Chowla,
  title={The {R}iemann hypothesis and {H}ilbert's tenth problem},
  author={Chowla, S.},
  series={Mathematics and its Applications},
  volume={4},
  publisher={Gordon and Breach Science Publishers},
  year={1965},
}

@article{Hil,
  title={Additive and multiplicative functions on shifted primes},
  author={Hildebrand, A.},
  journal={Proc. London Math. Soc. (3)},
  volume={59},
  number={2},
  pages={209--232},
  year={1989},
}

@article{Lichtman,
  title={Averages of the {M}\"obius function on shifted primes},
  author={Lichtman, J. D.},
  journal={Q. J. Math.},
  volume={73},
  number={2},
  pages={729--757},
  year={2022},
}

@article{LichtmanTeravainen2022,
  title={On the {H}ardy--{L}ittlewood--{C}howla conjecture on average},
  author={Lichtman, J. D. and Ter{\"a}v{\"a}inen, J.},
  journal={Forum Math. Sigma},
  volume={10},
  pages={Paper No. e57, 17 pp.},
  year={2022},
}

@article{MSTT,
  title={Higher uniformity of arithmetic functions in short intervals {I}.
         {A}ll intervals},
  author={Matom{\"a}ki, K. and Shao, X. and Tao, T. and Ter{\"a}v{\"a}inen, J.},
  journal={Forum Math. Pi},
  volume={11},
  pages={Paper No. e29},
  year={2023},
}

@article{MurVat,
  title={Twin primes and the parity problem},
  author={Murty, M. R. and Vatwani, A.},
  journal={J. Number Theory},
  volume={180},
  pages={643--659},
  year={2017},
}

@article{MurVat18,
  title={A remark on a conjecture of Chowla},
  author={Murty, M. R. and Vatwani, A.},
  journal={J. Ramanujan Math. Soc.},
  volume={33},
  number={2},
  pages={111–123},
  year={2018},
}

@misc{PD,
  title={The {P}oisson--{D}irichlet law for prime predecessors},
  author={{OpenAI}},
  howpublished={\href{https://github.com/openai/math/blob/main/preprints/The-Poisson-Dirichlet-Law-for-Prime-Predecessors-September-24-2026/paper.pdf}{OpenAI Math Release preprint}},
  note={September 24, 2026 version, 74 pp.},
  year={2026},
  url={https://github.com/openai/math/blob/main/preprints/The-Poisson-Dirichlet-Law-for-Prime-Predecessors-September-24-2026/paper.pdf},
}

@article{Pin,
  title={An approximation to the twin prime conjecture and the parity phenomenon},
  author={Pintz, J.},
  journal={Indag. Math. (N.S.)},
  volume={26},
  number={5},
  pages={883--896},
  year={2015},
}

@article{Robert2016,
  title={On {van der Corput}'s $k$-th derivative test for exponential sums},
  author={Robert, O.},
  journal={Indag. Math. (N.S.)},
  volume={27},
  number={2},
  pages={559--589},
  year={2016},
}

@misc{Sar,
  title={{S}arnak's conjecture},
  author={{AIM Problem Lists}},
  howpublished={\url{https://aimpl.org/sarnakconjecture/}},
  note={Section 5, Problem 5.2},
  url={https://aimpl.org/sarnakconjecture/5/},
}

@article{TaoTeravainen2021,
  title={The {H}ardy--{L}ittlewood--{C}howla conjecture in the presence of a {S}iegel zero},
  author={Tao, T. and Ter{\"a}v{\"a}inen, J.},
  journal={J. Lond. Math. Soc. (2)},
  volume={106},
  number={4},
  pages={3317--3378},
  year={2022},
}

@book{MV,
  title={Multiplicative number theory. {I}. {C}lassical theory},
  author={Montgomery, H. L. and Vaughan, R. C.},
  series={Cambridge Stud. Adv. Math.},
  volume={97},
  publisher={Cambridge University Press},
  address={Cambridge},
  year={2007},
}

@book{kouk,
  title={The distribution of prime numbers},
  author={Koukoulopoulos, D.},
  series={Grad. Stud. Math.},
  volume={203},
  publisher={American Mathematical Society},
  address={Providence, RI},
  year={2019},
  url={https://www.dms.umontreal.ca/~koukoulo/documents/publications/primes.pdf},
}

\end{document}